\newtheorem{theorem}{Theorem}[section]
\newtheorem*{thma}{Theorem A}
\newtheorem*{thmb}{Theorem B}
\newtheorem*{thmc}{Theorem C}
\newtheorem*{thmd}{Theorem D}
\newtheorem{lemma}[theorem]{Lemma}
\newtheorem{proposition}[theorem]{Proposition}
\newtheorem{corollary}[theorem]{Corollary}
\theoremstyle{definition}
\newtheorem{example}[theorem]{Example}
\newtheorem{remark}[theorem]{Remark}
\newtheorem{conjecture}[theorem]{Conjecture}
\numberwithin{equation}{section}
\newcommand{\g}{\mathfrak{g}}
\newcommand{\n}{\mathfrak{n}}
\newcommand{\h}{\mathfrak{h}}
\newcommand{\p}{\mathfrak{p}}
\newcommand{\m}{\mathfrak{m}}
\newcommand{\rea}[2]{U_{#1}(#2)} %reduced enveloping algebras
\newcommand{\pth}[1]{{#1}^{[p]}} %pth power map
\newcommand{\qn}{\mathfrak{q}(n)}
\newcommand{\ra}{\rightarrow}
\newcommand{\sudim}{\underline{\text{dim}}\,}
\newcommand{\ev}[1]{{#1}_{\bar{0}}}
\newcommand{\od}[1]{{#1}_{\bar{1}}}
\newcommand{\la}{\lambda}
\newcommand{\ad}{\text{ad}\,}
\newcommand{\Irr}{\text{Irr}}
\newcommand{\gl}{{\mathfrak{gl}}}
\newcommand{\Z}{ \mathbb Z }
\newcommand{\nc}{\newcommand}
 \nc{\La}{\Lambda}
 \nc{\ep}{\epsilon}
 \nc{\vep}{\varepsilon}
\def\Ddots{\mathinner{\mkern1mu\raise\p@
\vbox{\kern7\p@\hbox{.}}\mkern2mu
\raise4\p@\hbox{.}\mkern2mu\raise7\p@\hbox{.}\mkern1mu}}
\begin{document}
\title[Modular Representations of queer Lie superalgebras]
{Representations of Lie superalgebras in prime characteristic II:
the queer series}

\author[Weiqiang Wang]{Weiqiang Wang}\thanks{Partially supported by
NSA Grant H98230-08-1-0039 and NSF Grant DMS--0800280.}
\address{Department of Mathematics, University of Virginia,
Charlottesville, VA 22904} \email{ww9c@virginia.edu (Wang)\\
lz4u@virginia.edu (Zhao)}

\author{Lei Zhao}
%\email{lz4u@virginia.edu}

\subjclass[2000]{Primary 17B50.}

\begin{abstract}
The modular representation theory of the queer Lie superalgebra
$\mathfrak{q}(n)$ over characteristic $p>2$ is developed. We
obtain a criterion for the irreducibility of baby Verma modules
with semisimple $p$-characters $\chi$ and a criterion for the
semisimplicity of the corresponding reduced enveloping algebras
$\rea{\chi}{\mathfrak{q}(n)}$. A $(2p)$-power divisibility of
dimensions of $\qn$-modules with nilpotent $p$-characters is
established. The representation theory of $\mathfrak{q}(2)$
is treated in detail. We formulate a Morita super-equivalence conjecture
for $\qn$ with general $p$-characters which is verified for $n=2$.
\end{abstract}

\maketitle
\date{}
  \setcounter{tocdepth}{1}
  \tableofcontents

\section{Introduction}\label{sec:intro}

\subsection{}

In \cite{WZ}, the authors initiated the modular representation
theory of Lie superalgebras over  an algebraically closed field
$K$ of characteristic $p>2$, by formulating a general superalgebra
analogue of the Kac-Weisfeiler (KW) conjecture and establishing it
for the basic classical Lie superalgebras. Our work generalized (via
a somewhat modified approach)
the earlier work on Lie algebras of reductive algebraic groups by
Kac-Weisfeiler \cite{KW}, Parshall-Friedlander \cite{FP}, Premet
\cite{Pr1, Pr2}, and others (cf. Jantzen \cite{Jan} for a review
and extensive references).

This paper is a sequel to \cite{WZ}, and its goal is to develop
systematically the modular representation theory of the queer Lie
superalgebra $\g \equiv \mathfrak{q}(n)$ over the field $K$. As a
byproduct, the finite $W$-algebra associated to $\g$ is also
introduced.

\subsection{}
Recall that the queer Lie superalgebra $\g =\ev \g +\od \g$ consists
of matrices of the form:
\begin{equation}  \label{q(n)}
\begin{pmatrix}
A&B\\
B&A\\
\end{pmatrix},
\end{equation}
where $A$ and $B$ are arbitrary $n\times n$ matrices. Note that the
even subalgebra $\ev \g$ is isomorphic to $\gl(n)$ and the odd part
$\od \g$ is another isomorphic copy of $\gl(n)$ under the adjoint
action of $\ev \g$. The queer Lie superalgebra $\g$ can be regarded
as a true super analogue of the general linear Lie algebra, and its
representation theory over the complex field has been studied by
various authors (see \cite{Br, CW, Pen, Ser, FM}). Also the
modular representations of the type $Q$ algebraic supergroup have
been studied in \cite{BK} and they played a key role in the the
classification of the simple modules of the spin symmetric group
over $K$.

It is worth emphasizing that, in contrast to simple Lie
algebras, a Cartan subalgebra $\h =\ev \h + \od \h$ of the queer
Lie superalgebra is not abelian and its odd part $\od \h$ is
nonzero. Moreover, $\qn$ admits a non-degenerate {\em odd}
symmetric bilinear form.
%Largely because of these queer properties, the (modular)
%representation theory of $\g$ turns out to be unique and very
%interesting.

For restricted Lie superalgebras including $\g$, one can make
sense the notions of $p$-characters $\chi \in \ev \g^*$ and the
corresponding reduced enveloping algebras $U_\chi (\g)$ (see
Section~\ref{sec:basics}). Recalling $\ev \g \cong \gl (n)$, one
can also make sense the Jordan decomposition of a $p$-character as
well as the notion of semisimple and nilpotent $p$-characters. The
Lie superalgebra $\g$ admits a triangular decomposition $ \g
=\mathfrak{n}^- \oplus \mathfrak{h} \oplus \mathfrak{n}^+. $ We
may assume that a $p$-character $\chi$ satisfies $\chi(\ev \n^+)
=0$ without loss of generality, via a $GL(n)$-conjugation if
necessary. For a weight $\la$ in a certain subset $\Lambda_\chi$
of $\ev \h^*$ (see \eqref{weight}), we define the simple
$U_\chi(\h)$-module $V_\chi(\la)$ (which is in general not
one-dimensional because $\h$ is non-abelian), and then define the
baby Verma module
$
Z_{\chi}(\la)=\rea{\chi}{\g} \otimes_{\rea{\chi}{\h \oplus \n^+}}
V_{\chi}(\la).
$
Note that baby Verma modules have varied dimensions depending on
$\la$.

\subsection{}
Our first main result is the following criterion on the
irreducibility of $Z_\chi(\la)$ (see \eqref{eq:Phi} for the
precise definition of the polynomial $\Phi$).

\begin{thma} [Theorem~\ref{thm:semisimple}]
Assume that $\chi \in \ev \g^*$ is semisimple with $\chi(\ev
\n^+)=\chi(\ev \n^-) =0$. Then a baby Verma module $Z_{\chi}(\la)$
with $\la \in \La_{\chi}$ is irreducible if and only if $\Phi(\la)
\neq 0$.
\end{thma}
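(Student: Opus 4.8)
The plan is to interpret the irreducibility of $Z_{\chi}(\la)$ as the bijectivity of a canonical homomorphism from $Z_{\chi}(\la)$ to a contravariant-dual baby Verma module, to observe that the determinant of this homomorphism is a polynomial in $\la$, and then to match that polynomial with a nonzero scalar multiple of $\Phi$. For the first point, let $\tau$ be the standard anti-involution of $\g=\qn$ which fixes $\ev\h$ pointwise, preserves $\h$, and interchanges $\n^{+}$ and $\n^{-}$ (the Chevalley-type anti-automorphism familiar from the characteristic-zero theory of $\qn$). Since we assume $\chi(\ev\n^{+})=\chi(\ev\n^{-})=0$, we have $\chi\circ\tau=\chi$, so $\tau$ descends to an anti-automorphism of $\rea{\chi}{\g}$ and yields a weight-preserving contravariant duality $M\mapsto M^{\tau}$ on finite-dimensional $\rea{\chi}{\g}$-modules that fixes $V_{\chi}(\la)$ up to a parity twist. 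By standard highest-weight arguments, $Z_{\chi}(\la)$ has simple head $L_{\chi}(\la)$, $Z_{\chi}(\la)^{\tau}$ has simple socle $L_{\chi}(\la)$, and composing the quotient map with the inclusion produces a nonzero homomorphism $\psi_{\la}\colon Z_{\chi}(\la)\to Z_{\chi}(\la)^{\tau}$. Since $\dim Z_{\chi}(\la)=\dim Z_{\chi}(\la)^{\tau}$, the module $Z_{\chi}(\la)$ is irreducible if and only if $\psi_{\la}$ is an isomorphism.

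Next I would write $\psi_{\la}$ in coordinates. Fix a PBW basis of $\rea{\chi}{\n^{-}}$ (even negative root vectors with exponents $0,\dots,p-1$; odd ones with exponents $0,1$, since for $\qn$ the odd root vectors square to zero in $U(\g)$), the induced basis of $Z_{\chi}(\la)=\rea{\chi}{\n^{-}}\otimes V_{\chi}(\la)$, and the dual data on $Z_{\chi}(\la)^{\tau}$. The matrix of $\psi_{\la}$ is obtained by commuting elements of $\rea{\chi}{\n^{+}}$ past those of $\rea{\chi}{\n^{-}}$ and then invoking the explicit action of $\rea{\chi}{\h}$ on $V_{\chi}(\la)$; its determinant $D(\la)$ is a polynomial in $\la=(\la_{1},\dots,\la_{n})$ on each stratum of $\ev\h^{*}$ on which $\dim V_{\chi}(\la)$ (equivalently, the rank of the symmetric form on $\od\h$ underlying the Clifford algebra $\rea{\chi}{\h}$) is constant, in particular on the dense stratum where all $\la_{i}$ are nonzero. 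Thus the theorem is equivalent to the identity
\[
D(\la)=c\cdot\Phi(\la),\qquad c\ \text{a nonzero scalar independent of }\la .
\]

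The heart of the proof is this identification, which I would carry out in the classical Shapovalov-determinant style, in two parts. First, for each irreducible factor of $\Phi$ one shows it divides $D$ with at least the prescribed multiplicity, by a rank-one reduction: for a factor attached to an even positive root $\varepsilon_{i}-\varepsilon_{j}$, pass to the subalgebra spanned by $\h$ together with an $\mathfrak{sl}(2)$-triple of even root vectors for $\varepsilon_{i}-\varepsilon_{j}$, where the corresponding baby Verma acquires an explicit proper submodule precisely when that factor vanishes, so inducing up forces $D$ to vanish there to the correct order. For a factor attached to an odd root, repeat with the rank-one subalgebra generated by $\h$ and the odd root vectors for $\varepsilon_{i}-\varepsilon_{j}$ --- essentially a copy of $\mathfrak{q}(2)$ --- whose baby Verma modules, together with the Clifford-algebra behaviour of $\rea{\chi}{\h}$ on $V_{\chi}(\la)$, produce the odd-root factors of $\Phi$. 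Second, one compares degrees: the $\la$-degree of $D$ is read off from $\dim Z_{\chi}(\la)=\dim\rea{\chi}{\n^{-}}\cdot\dim V_{\chi}(\la)$ and the weight grading of $\rea{\chi}{\n^{-}}$, and it equals $\deg\Phi$. Together these force $D=c\,\Phi$, and hence the theorem.

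The main obstacle is the odd-root case of the first step. In contrast to the classical $\mathfrak{sl}(2)$-reduction, the rank-one queer algebra $\mathfrak{q}(2)$ has a non-abelian Cartan and an odd root vector whose square is a Cartan element, so its baby Verma modules must be analysed from scratch, and their contribution has to be disentangled from that of the Clifford algebra $\rea{\chi}{\h}$ acting on $V_{\chi}(\la)$ --- all while keeping track of the parity twist in the duality $(-)^{\tau}$ and of the jumps of $\dim V_{\chi}(\la)$ across strata. A lesser but genuine subtlety is the non-regular semisimple case, in which several factors of $\Phi$ originate from roots of the centralizer $\cen{\g}{\chi}\cong\bigoplus_{i}\mathfrak{q}(n_{i})$; one can streamline this by first reducing, via a parabolic-induction (Morita) equivalence that is available because $\chi(\ev\n^{\pm})=0$, from $\g$ to $\cen{\g}{\chi}$, at the cost of then having to settle the central-$p$-character case for each $\mathfrak{q}(n_{i})$ on its own.
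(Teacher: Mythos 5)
Your route (a Chevalley-type anti-involution $\tau$ of $\mathfrak{q}(n)$, the canonical map $\psi_\la\colon Z_\chi(\la)\to Z_\chi(\la)^{\tau}$, and an identification of $\det\psi_\la$ with a scalar multiple of $\Phi$) is genuinely different from the paper's, which is Rudakov-style: every nonzero submodule contains the extreme vector $f_1^{p-1}F_1\cdots f_N^{p-1}F_N v_0$ (Lemma~\ref{lem:ss-lowest vector}); applying $e_1^{p-1}E_1\cdots e_N^{p-1}E_N$ to it returns $\tilde{\Phi}(\la)v_0+w$ with $w\in\La(\od\h'')_+v_0$ and $\tilde{\Phi}$ a \emph{single} polynomial whose degree does not exceed $\deg\Phi$ (Lemma~\ref{lem:ss-F'}); the reducibility direction is settled by the explicit $\mathfrak{q}(2)$ computation for simple roots together with a change-of-Borel twisting argument for arbitrary roots (Lemmas~\ref{lem:ss-minimal parabolic} and~\ref{lem:ss-divisor}); divisibility plus the degree bound then force $\tilde{\Phi}=c\,\Phi$.

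The genuine gap in your proposal is the degree comparison. The determinant $D(\la)$ of $\psi_\la$ is, up to basis choices, the full Shapovalov-type determinant of the contravariant pairing on all of $Z_\chi(\la)$, i.e.\ a product over all weight spaces of Gram determinants, and its degree grows like (dimension of the module)$\times$(average depth below the highest weight). Already for $\mathfrak{q}(2)$ with generic $\la$ the weight space at $\la-m(\vep_1-\vep_2)$ is $4$-dimensional and contributes degree roughly $4m$, so $\deg D$ is of order $p^2$, vastly exceeding $\deg\phi=p$; the irreducible factors of $D$ occur with high multiplicities, exactly as for $\mathfrak{sl}(2)$. Hence the asserted identity $D=c\,\Phi$ is false and ``$\deg D=\deg\Phi$'' cannot be read off from $\dim Z_\chi(\la)$; at best the zero sets agree, and to extract the crucial sufficiency direction ($\Phi(\la)\neq0\Rightarrow D(\la)\neq0$) you would have to determine \emph{all} irreducible factors of $D$ with their multiplicities, e.g.\ by a leading-term analysis in the style of Shapovalov and Jantzen, which you do not carry out and which is the hard part of that route. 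Two further unaddressed points: polynomiality of $D$ in $\la$ is delicate because $V_\chi(\la)$ is a Clifford module whose construction (the choice of the maximal isotropic $\od\h'$, e.g.\ the scalar $\mu$ with $\mu^2=-\la_1/\la_2$ in the $\mathfrak{q}(2)$ tables) depends on $\la$ only through square roots and whose dimension jumps between strata; and your rank-one divisibility step, as phrased via parabolic induction, works only for \emph{simple} roots $\vep_i-\vep_j$ -- for non-simple roots the relevant $\mathfrak{q}(2)$-subalgebra is not normalized by the chosen Borel, which is precisely why the paper needs the reflection/twisting argument of Lemma~\ref{lem:ss-divisor}. The degree obstruction disappears if you replace $\det\psi_\la$ by the single matrix coefficient of $\psi_\la$ pairing $f_1^{p-1}F_1\cdots f_N^{p-1}F_Nv_0$ against $v_0$: that coefficient has degree at most $pN=\deg\Phi$, and combined with the fact that every nonzero submodule contains the extreme vector, this is in substance the paper's proof.
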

This should be regarded as a queer analogue of Rudakov's classical
result for modular Lie algebras \cite{Rud}. We need some extra care
to deal with the complication from the multi-dimensionality of the
high weight subspace $V_{\chi}(\la)$ of $Z_{\chi}(\la)$. An
immediate consequence of the above theorem is a neat criterion for
the semisimplicity of $\rea{\chi}{\g}$ associated to semisimple
$p$-characters $\chi$. Denote by $h_i \in \ev \h$ $(1\le i \le n)$
the  element corresponding  to the $i$th diagonal matrix unit of $A$
in \eqref{q(n)}.

\begin{thmb} [Theorem~\ref{semisimpleUg}]
Let $\chi$ be semisimple with $\chi(\ev \n^+)=\chi(\ev \n^-)=0$.
The algebra $\rea{\chi}{\g}$ is semisimple if and only if
$\,0\neq \chi(h_i)  \neq \pm \chi(h_j)$ for  $1 \leq i\neq j \leq n$.
\end{thmb}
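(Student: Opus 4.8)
The plan is to derive this as a corollary of Theorem A by a standard block-decomposition argument. Since $\chi$ is semisimple with $\chi(\ev\n^\pm)=0$, the algebra $\rea{\chi}{\g}$ is a finite-dimensional associative superalgebra whose simple modules are, by the usual highest-weight theory, exactly the heads $L_\chi(\la)$ of the baby Verma modules $Z_\chi(\la)$ as $\la$ ranges over $\La_\chi$. The key observation is that $\rea{\chi}{\g}$ is semisimple if and only if every $Z_\chi(\la)$ is irreducible \emph{and} every $Z_\chi(\la)$ is also projective (equivalently, every projective indecomposable equals some $Z_\chi(\la)$, so that there are no nonsplit self-extensions or extensions between distinct simples). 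First I would record that by a dimension/character count all the $Z_\chi(\la)$ have the same composition-factor multiplicities behavior forced by the PBW basis, and that $\sum_\la \dim Z_\chi(\la)\cdot(\text{something})$ matches $\dim\rea{\chi}{\g}$; more efficiently, one uses that baby Verma modules for a semisimple $\chi$ are free over $\rea{\chi}{\h}$-side and that $\rea{\chi}{\g}$ is a Frobenius (in fact symmetric, up to the odd form) algebra, so semisimplicity is equivalent to \emph{all} baby Verma modules being simple.

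Granting that reduction, the theorem becomes: every $Z_\chi(\la)$, $\la\in\La_\chi$, is irreducible $\iff$ $\Phi(\la)\neq 0$ for all $\la\in\La_\chi$ $\iff$ the stated condition $0\neq\chi(h_i)\neq\pm\chi(h_j)$. So the real content is to unwind the polynomial $\Phi$ from \eqref{eq:Phi} and show that, as $\la$ runs over the finite set $\La_\chi$, the product of the relevant factors is nonzero for \emph{every} such $\la$ precisely when those inequalities on $\chi(h_i)$ hold. The roots of $\g=\qn$ come in $\pm(\varepsilon_i-\varepsilon_j)$, and the extra subtlety over $\gl(n)$ is the non-abelian Cartan: the odd part $\od\h$ contributes, and the Clifford-algebra structure on $V_\chi(\la)$ means the factors of $\Phi$ involve sums like $\chi(h_i)\pm\chi(h_j)$ (from the odd root vectors pairing diagonal entries of $A$ against each other) as well as $\chi(h_i)$ itself (from $\od\h$). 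I would go weight by weight: for each positive root $\varepsilon_i-\varepsilon_j$ the even and odd root vectors together force factors proportional to $(\chi(h_i)-\chi(h_j))$ and $(\chi(h_i)+\chi(h_j))$ (the latter coming from the odd generator, which squares into $h_i+h_j$), and the Cartan itself forces factors $\chi(h_i)$; multiplying over $i<j$ and over $i$, and checking that the $\la$-dependence disappears at a semisimple $\chi$ (because the $p$-character, not $\la$, controls vanishing for semisimple $\chi$), gives that $\prod_{\la}\Phi(\la)$ vanishes iff some $\chi(h_i)=0$ or $\chi(h_i)=\pm\chi(h_j)$.

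The main obstacle is the last point: checking that the condition ``$\Phi(\la)\neq 0$ for all $\la\in\La_\chi$'' collapses to a condition on $\chi$ alone, with no residual $\la$-dependence. This requires knowing \eqref{eq:Phi} explicitly and using that for semisimple $\chi$ the set $\La_\chi$ is a full coset under the relevant lattice, so that if a factor of $\Phi$ could vanish for \emph{some} $\la$ it would — because the factor is linear in $\la$ with coefficients built from $\chi(h_i)\pm\chi(h_j)$ being a $p$-th power situation — and conversely. Concretely I expect the argument to run: the bad factors are exactly $\bigl(\chi(h_i)-\chi(h_j)\bigr)^p-\bigl(\chi(h_i)-\chi(h_j)\bigr)=\prod_{c\in\mathbb F_p}(\chi(h_i)-\chi(h_j)-c)$ and its $+$ analogue and $\chi(h_i)^p-\chi(h_i)$, each of which is zero iff the corresponding difference/value lies in $\mathbb F_p$ — but here one must be careful, since for a \emph{general} semisimple $\chi$ the values $\chi(h_i)$ need not be in $\mathbb F_p$, so the vanishing condition is genuinely $\chi(h_i)=0$ resp.\ $\chi(h_i)=\pm\chi(h_j)$ exactly, not a congruence. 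Once that is pinned down, combining with Theorem A (applied to every $\la\in\La_\chi$) and the block-decomposition reduction finishes the proof.
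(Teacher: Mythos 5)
Your reduction step contains a genuine error that would break the ``only if'' direction. You assert that, since $\rea{\chi}{\g}$ is symmetric/Frobenius, ``semisimplicity is equivalent to \emph{all} baby Verma modules being simple.'' That equivalence is false for $\qn$, and the paper itself contains the counterexample: for $\mathfrak q(2)$ with $\chi$ semisimple, $\chi(h_1)=0$ and $\chi(h_2)\neq 0$ (Proposition~\ref{prop:q(2)-ss-zero-nonzero}), \emph{every} baby Verma module is irreducible, yet $\rea{\chi}{\g}$ has blocks isomorphic to $\mathfrak{q}_{2p}(K[x]/\langle x^2\rangle)$ and is not semisimple --- the projective covers of the type $Q$ simples are nonsplit self-extensions, which a symmetric-algebra argument does not exclude. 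Relatedly, you try to extract the condition $\chi(h_i)\neq 0$ from factors ``$\chi(h_i)$'' inside $\Phi$, but $\Phi(\la)=\prod_{i<j}\phi(\la_i,\la_j)$ with $\phi(x,y)=(x+y)(x-y-1)\cdots(x-y-(p-1))$ contains no such factors; the odd Cartan does not contribute to $\Phi$ at all. So in the case ``all $\Phi(\la)\neq 0$ but some $\chi(h_i)=0$'' your plan would wrongly conclude semisimplicity.

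The missing idea is where the condition $0\neq\chi(h_i)$ actually comes from: it enters in the Wedderburn dimension count, not from Theorem A. The paper's reduction is: $\rea{\chi}{\g}$ is semisimple if and only if all $Z_\chi(\la)$, $\la\in\La_\chi$, are simple \emph{of the expected type} ($Q$ for $n$ odd, $M$ for $n$ even) \emph{and} all $\chi(h_i)\neq 0$. The point is that when every $\chi(h_i)\neq 0$, every $\la\in\La_\chi$ has all $\la_i\neq 0$, so the form $(\cdot|\cdot)_\la$ on $\od\h$ is nondegenerate, $V_\chi(\la)$ has full dimension and uniform type, and the $p^n$ baby Vermas account for the whole dimension of $\rea{\chi}{\g}$; if some $\chi(h_i)=0$, the weights with $\la_i=0$ give degenerate forms, $V_\chi(\la)$ and $Z_\chi(\la)$ drop in dimension or switch type, and the count forces non-semisimplicity regardless of irreducibility. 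The remainder of your argument --- translating ``$\Phi(\la)\neq 0$ for all $\la\in\La_\chi$'' into the conditions $\chi(h_i)\neq\pm\chi(h_j)$ via $(\la_i\pm\la_j)^p-(\la_i\pm\la_j)=(\chi(h_i)\pm\chi(h_j))^p$, including the freedom to shift $\la_i$ by elements of $\mathbb F_p$ in the converse --- is essentially the paper's second half and is fine, though your hedging about ``exact equality versus congruence'' should be resolved as: $\la_i-\la_j\in\mathbb F_p$ iff $\chi(h_i)=\chi(h_j)$, and $\la_i+\la_j=0$ is achievable iff $\chi(h_i)=-\chi(h_j)$.
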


%This theorem also provides a precise meaning of {\em generic}
%$p$-characters for $\g$.

\subsection{}

Another main result of the paper is the proof of the super KW conjecture,
which was first formulated in \cite{WZ}, for $\g =\qn$ with
nilpotent $p$-characters. Let $\chi\in \ev \g^*$ be nilpotent. We
regard $\chi \in \g^*$ by setting $\chi(\od \g) = 0$. Denote the
centralizer of $\chi$ in $\g$, which is clearly $\Z_2$-graded, by
$\g_{\chi} = \g_{\chi,\bar 0} + \g_{\chi,\bar 1}$. We show that $\dim \ev
\g - \dim \g_{\chi,\bar{0}} = \dim \od \g - \dim \g_{\chi,\bar{1}}$,
and this number
is actually an even integer say $2d$. The following theorem should be regarded as
a queer generalization of the celebrated Kac-Weisfeiler conjecture
(Premet's theorem \cite{Pr1}) for Lie algebras of reductive
algebraic groups.

\begin{thmc} [Theorem~\ref{th:superKW}]
 Let $\chi \in \ev \g^*$ be a nilpotent $p$-character. Then the dimension of
every simple $\rea{\chi}{\g}$-module is divisible by $\delta = p^d
2^d$.
\end{thmc}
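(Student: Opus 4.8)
The plan is to adapt Premet's reduction strategy for the classical Kac–Weisfeiler conjecture to the super setting, exploiting the fact that the representation theory of $U_\chi(\g)$ depends only on the $GL(n)$-orbit (more precisely the $Q(n)$-adjoint orbit) of $\chi$. First I would normalize: since $\chi \in \ev\g^* \cong \gl(n)^*$ is nilpotent, after an adjoint conjugation we may assume $\chi$ corresponds, via the (even part of the) trace form, to a nilpotent matrix $e \in \gl(n) \cong \ev\g$, and we may take $e$ to be a lowest weight vector for an $\mathfrak{sl}_2$-triple $(e,h,f)$ sitting inside $\ev\g$. This gives a good $\Z$-grading $\g = \bigoplus_{j} \g(j)$ by $\ad h$-eigenvalues, compatible with the $\Z_2$-grading, with $\chi$ supported on $\g(-2)$. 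The number $\dim\ev\g - \dim\g_{\chi,\bar 0} = \dim\od\g - \dim\g_{\chi,\bar 1}$ equals $\dim\ev\g(\text{odd }j) \,{+}\, (\text{correction})$; the key input I would establish (or cite from the construction of the finite $W$-algebra for $\g$ promised in the introduction) is that this common value is $2d$ and, crucially, that the bilinear form $\langle x,y\rangle := \chi([x,y])$ is non-degenerate on $\g(-1)\oplus\g(1)$ in both parities, i.e. symplectic of rank $2d$ on the even part and of rank $2d$ on the odd part.

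Next I would build the analogue of Premet's nilpotent subalgebra and the induced module. Choose inside $\g(-1)\oplus\g(1)$ (in each parity) a Lagrangian-type complement $\mathfrak{l}$ so that $\mathfrak{m} := \mathfrak{l} \oplus \bigoplus_{j \le -2}\g(j)$ is a $\chi$-admissible subalgebra: $\chi$ restricts to a character on $\mathfrak{m}$ and $\chi([\mathfrak{m},\mathfrak{m}])=0$. In the even directions this is Premet's classical choice; in the odd directions one must be careful because the odd form is \emph{symmetric}, not skew, so a ``Lagrangian'' for it is an isotropic subspace of half the dimension with respect to a symmetric form — this exists over $K$ (after possibly adjusting by the odd Cartan, since $p>2$ and $K$ is algebraically closed quadratic forms split) but the odd part of $\mathfrak{m}/[\mathfrak{m},\mathfrak{m}]$ will carry a residual Clifford-algebra structure rather than being abelian. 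One then forms $Q_\chi := U_\chi(\g)\otimes_{U_\chi(\mathfrak{m})} K_\chi$, and the main structural claim is that $U_\chi(\g) \cong \mathrm{Mat}_{p^{d_{\bar 0}}\,2^{d_{\bar 1}}}\big(\End_{\g}(Q_\chi)^{\mathrm{op}}\big)$ as in the classical Morita-type argument, with the even half-dimension $d_{\bar 0}$ and the Clifford dimension $2^{d_{\bar 1}}$ combining. Since $d_{\bar 0} = d_{\bar 1} = d$ by the equality of even and odd corank, $Q_\chi$ has dimension $p^{\dim\mathfrak{n}^-\,-\,d}\cdot 2^{(\dim\od\n^-)-\text{stuff}}$ and every simple $U_\chi(\g)$-module, being a summand of a free $U_\chi(\mathfrak{m})$-module obtained by inducing from a module over a Clifford-type algebra on which $\chi$ acts, acquires a factor of $p^d$ from the even Lagrangian induction and a factor of $2^d$ from the odd Clifford part. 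This yields divisibility by $\delta = p^d 2^d$.

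The main obstacle I expect is the odd half of the argument: unlike the even case, where Premet's proof is by now standard, the odd ``$W$-algebra'' reduction must handle the symmetric odd form, and one has to show that inducing from $U_\chi(\mathfrak{m})$ forces the claimed $2^d$ factor even though $U_\chi$ of the odd nilradical involves square-zero odd generators whose ``reduced'' relations are governed by $\chi$. Concretely, the subtlety is proving that $\End_{U_\chi(\mathfrak{m})\text{-mod}}$ of the relevant induced module over the odd directions is a full matrix-times-Clifford algebra of the right size, i.e. that no collapsing occurs — this is exactly the place where the non-degeneracy of $\chi([\cdot,\cdot])$ on $\od\g(-1)\oplus\od\g(1)$ (rank $2d$) is used. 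A clean way to package it is: first handle the purely even statement by Premet's theorem applied to $\ev\g \cong \gl(n)$ with $p$-character $\chi$, getting $p^d \mid \dim M$ for $M$ viewed as a $U_\chi(\ev\g)$-module; then run an independent odd argument showing the odd nilradical forces an extra $2^d$, using that a simple module over a Clifford algebra on $2d$ generators has dimension divisible by $2^d$, together with a Frobenius-reciprocity/induction count on the odd generators. Combining the two coprime factors $p^d$ and $2^d$ (here $p>2$) gives the divisibility by $\delta$. A short consistency check: for $\chi$ regular nilpotent in $\qn$, $d = \binom{n}{2}$, and $\delta = p^{\binom n2}2^{\binom n2}$ should match $\dim Z_\chi(\la)$ for generic $\la$, which one can verify directly from the explicit dimension of baby Verma modules.
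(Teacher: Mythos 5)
Your outline up through the construction of $\m$ and the induced module is essentially the paper's route (a $\Z$-grading attached to the nilpotent, $\m=\bigoplus_{k\geq 2}\g(-k)\oplus\g(-1)'$ with $\g(-1)'$ maximal isotropic for $\chi([\cdot,\cdot])$, and $\delta=\dim \rea{\chi}{\m}=p^d2^d$), but the proposal never supplies the one step on which the theorem rests: that every (in particular every simple) $\rea{\chi}{\g}$-module is free over $\rea{\chi}{\m}$. In the paper this is exactly Proposition~\ref{prop:nil-freeness}, proved by an elementary Skryabin-type argument as in \cite{WZ, Skr}; the isomorphism $\rea{\chi}{\g}\cong M_{\delta}(W_\chi(\g)^{\mathrm{op}})$ is deduced \emph{afterwards}, so invoking it ``as in the classical Morita-type argument'' is circular here — Premet's original derivation of that isomorphism uses the divisibility (or support-variety machinery) you are trying to prove. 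Your proposed ``clean packaging'' — Premet's theorem for $\ev\g\cong\gl(n)$ giving $p^d\mid\dim M$, plus an independent odd argument giving $2^d$ via Clifford algebras — does deliver the $p^d$ factor (the coadjoint stabilizer of $\chi$ in $\gl(n)$ coincides with $\g_{\chi,\bar 0}$, so Premet's exponent agrees with the paper's $d$), but the $2^d$ half does not work as sketched: a simple $\rea{\chi}{\g}$-module carries no natural module structure over a Clifford algebra on $2d$ generators, because odd elements $y\in\od\g$ do not square to scalars in $\rea{\chi}{\g}$ (one has $y^2=\tfrac12[y,y]$, an even element of $\g$, and there is no odd analogue of the relations $x^p-\pth{x}-\chi(x)^p$), and the unspecified ``Frobenius-reciprocity/induction count on the odd generators'' is precisely the missing content. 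So the $2^d$-divisibility, which is the genuinely super part of the statement, remains unproved in your sketch.

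Two preparatory claims are also off and would need repair. The form $\langle x,y\rangle=\chi([x,y])$ is nondegenerate on $\g(-1)$ only, not on $\g(-1)\oplus\g(1)$ (it vanishes on $\g(1)$ and pairs $\g(\pm 1)$ into $\g(0)$, where $\chi$ vanishes), and its rank on $\g(-1)_{\bar i}$ is $\dim\g(-1)_{\bar i}$, in general strictly smaller than $2d$; the exponent $d$ also receives contributions from the pieces $\g(-k)$, $k\geq 2$, via $\sudim\m=\tfrac12(\sudim\g-\sudim\g_\chi)$. Correspondingly the isotropic subspace must be chosen inside $\g(-1)$, not inside $\g(-1)\oplus\g(1)$, or else $\chi$ fails to restrict to a one-dimensional character of $\m$. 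Finally, $\qn$ has no even invariant form: the paper identifies $\chi$ with an odd nilpotent $X\in\od\g$ via the odd trace form, and it is the explicit description of matrices commuting and anticommuting with $X$ (Proposition~\ref{prop:comm-anticomm}) that yields $\dim\g_{\chi,\bar 0}=\dim\g_{\chi,\bar 1}$, hence that the common corank is an even number $2d$ and that $\sudim\m=d|d$ — which is where $\delta=p^d2^d$ actually comes from. Your even-trace-form normalization produces the right grading but hides this equality, which you would still have to prove.
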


The proof of the above theorem is similar to the one in \cite{WZ}
for basic classical Lie superalgebras, which in turn is a
generalization of the approach of \cite{Pr1} with some
modification using an idea from Skryabin \cite{Skr}. A
$\Z$-grading on $\g$ associated to $\chi$ is first constructed,
which leads to the construction of a $p$-nilpotent Lie subalgebra
$\m$ of $\g$. Then an elementary argument without using
support varieties shows that every simple
$\rea{\chi}{\g}$-module is free over $\rea{\chi}{\m}$, and the
above theorem follows now by noting that the dimension of
$\rea{\chi}{\m}$ is $\delta=p^d 2^d$.

The algebra $\rea{\chi}{\m}$ has a unique simple module $K_{\chi}$
which is one-dimensional. An extra bonus of the above proof is the
introduction of a $K$-superalgebra $W_\chi(\g)$ which will be called
the finite $W$-superalgebra of type $Q$ (see \cite{Pr2, WZ} for the
finite $W$-(super)algebras associated to the basic classical Lie
superalgebras including Lie algebras of reductive algebraic groups).
We show further that $\rea{\chi}{\g}$ is isomorphic to the matrix
algebra $M_{\delta} (W_\chi(\g)^{\rm op})$, and this provides a
conceptual explanation of the above $\delta$-divisibility theorem.
The complex counterpart of the algebra $W_\chi(\g)$ is expected to
have a rich representation theory and will be studied elsewhere \cite{Z}.

It is worth mentioning that Boe, Kujawa and Nakano have a similar 2-divisibility result
for $\g$-modules in characteristic zero \cite{BKN}.

\subsection{}

For $n=2$, we are able to analyze in detail the structures of the baby Verma modules and
the reduced enveloping algebras for $\mathfrak q (2)$. In various cases,
we work out  the structures of projective covers and the
blocks of $U_\chi(\mathfrak q(2))$-modules in terms of quivers. Remarkably,
the $\mathfrak q (2)$ case is far more involved than the classical
case of $\mathfrak{sl} (2)$ \cite{FP, Jan} or the $\mathfrak{osp}
(1|2)$ case treated in \cite{WZ}.

Let $\chi =\chi_s +\chi_n$ be a Jordan decomposition of a general
$p$-character $\chi$. In contrast to the cases of the simple Lie
algebras and basic classical Lie superalgebras, it is not possible
to regard the centralizer $\g_{\chi_s}$ of $\chi_s$ in $\g$ as a
Levi subalgebra and to fit it as a middle term of a triangular
decomposition of $\g$. Hence there is no natural generalization to
$\g$ of the functors which give rise to the Morita equivalence
\cite{FP} which reduces the study of modular representations of a
reductive Lie algebra with a general $p$-character to those of a
Levi subalgebra with a nilpotent $p$-character (also see \cite{WZ}
for a generalization to basic classical Lie superalgebras).
Nevertheless, we formulate a conjecture on the existence of a
puzzling Morita ``super-equivalence" for $\g$ with its centralizers
(without good candidates for adjoint functors), and prove it in the
case of $\mathfrak q(2)$ by ad hoc direct computations; see
Section~\ref{sec:MoritaSuper} for a precise definition of Morita
super-equivalence.

\begin{thmd} [Theorem~\ref{th:morita_q2}]
Let $\chi \in  \ev{\mathfrak q(2)}^*$ and $\chi =\chi_s +\chi_n$ be
its Jordan decomposition. Then the superalgebras $U_\chi(\mathfrak
q(2))$ and $U_\chi(\mathfrak q(2)_{\chi_s})$ are Morita
super-equivalent.
\end{thmd}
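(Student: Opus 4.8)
The plan is to establish the equivalence by a direct, case-by-case comparison of the two module supercategories, the cases being governed by the Jordan type of $\chi$. First I would normalize $\chi$: after replacing it by a $\GL(2)$-conjugate we may assume $\chi_s=\mathrm{diag}(a_1,a_2)$ is diagonal, where $a_i=\chi(h_i)$; and since $\chi_n$ commutes with $\chi_s$ inside $\gl(2)\cong\ev{\mathfrak q(2)}$, if $\chi_n\neq 0$ then necessarily $a_1=a_2$, and after a further rescaling $\chi_n$ may be taken to be a fixed regular nilpotent. A short computation of the supercentralizer (using the convention $\chi(\od{\mathfrak q(2)})=0$) then shows that, up to this normalization, $\mathfrak q(2)_{\chi_s}$ is exactly one of: (i) $\mathfrak q(2)$ itself, when $\chi_s=0$ (this is precisely the case of nilpotent $\chi$); (ii) $\ev{\mathfrak q(2)}\cong\gl(2)$, when $a_1=a_2\neq 0$; (iii) the even Cartan $\mathbb K h_1\oplus\mathbb K h_2$, when $a_1,a_2\neq 0$ and $a_1\neq\pm a_2$; (iv) the $(2|2)$-dimensional subalgebra $\mathbb K h_1\oplus\mathbb K h_2\oplus\mathbb K\bar e_{12}\oplus\mathbb K\bar e_{21}$, when $a_1=-a_2\neq 0$; and (v) the $(2|1)$-dimensional subalgebra $\mathbb K h_1\oplus\mathbb K h_2\oplus\mathbb K\bar h_j$ (with $a_j=0$), when exactly one of $a_1,a_2$ vanishes. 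In case (i) the assertion is vacuous, the equivalence being the identity.

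In each of the remaining cases the method is to determine the basic algebra of $\rea{\chi}{\mathfrak q(2)}$ and of $\rea{\chi}{\mathfrak q(2)_{\chi_s}}$ — that is, the quiver with relations, the decomposition into (super)blocks, and the parities and types ($M$ versus $Q$) of the simple modules — and then to realize the Morita super-equivalence as the resulting identification of the two quiver superalgebras, up to the matrix-algebra and Clifford-algebra factors built into the definition in Section~\ref{sec:MoritaSuper}. The Clifford factor absorbs the odd directions of $\mathfrak q(2)$ that are lost on passing to the smaller centralizer, so as a first consistency check one verifies that $\dim\rea{\chi}{\mathfrak q(2)}$ divided by $\dim\rea{\chi}{\mathfrak q(2)_{\chi_s}}$ equals $2^{m}$ times a power of $p$, where $m=\dim\od{\mathfrak q(2)}-\dim(\mathfrak q(2)_{\chi_s})_{\bar 1}$ is the number of lost odd directions. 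For case (iii), Theorem~\ref{semisimpleUg} shows $\rea{\chi}{\mathfrak q(2)}$ is semisimple; one checks directly that $\rea{\chi}{\mathfrak q(2)_{\chi_s}}\cong K^{p^2}$ is semisimple as well, and it then remains only to match the two sets of simple modules, with their parities, via $\La_\chi$. Cases (ii), (iv), (v) follow the same scheme: the few defining relations of the small algebra $\rea{\chi}{\mathfrak q(2)_{\chi_s}}$ give its quiver with relations essentially by inspection (in (ii) this is the restricted $\mathfrak{sl}(2)$ picture tensored with a split central factor, which becomes semisimple in the regular-nilpotent subcase; in (iv) and (v) one obtains products of exterior superalgebras and matrix superalgebras), and this is compared with the quiver of $\rea{\chi}{\mathfrak q(2)}$ read off from the projective covers and Loewy series obtained in the detailed analysis of $\mathfrak q(2)$ carried out in the preceding sections.

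The main obstacle is case (iv), and to a lesser extent the non-semisimple subcase of (ii): there $\rea{\chi}{\mathfrak q(2)}$ has several non-semisimple blocks with nontrivial quivers, and — there being, as noted in the Introduction, no good candidates for the relevant adjoint functors — one must check block by block that the Cartan matrices, the Loewy lengths of the indecomposable projectives, and the quiver relations on the two sides agree after the prescribed parity and Clifford normalization. The delicate point is to exclude any discrepancy in the number or sizes of the (super)blocks; this is controlled by tracking how $\La_\chi$, together with the action of the odd Cartan elements $\bar h_i$, partitions the simple $\rea{\chi}{\mathfrak q(2)}$-modules, and by matching this partition against the directly computed block decomposition of $\rea{\chi}{\mathfrak q(2)_{\chi_s}}$. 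Once the quivers with relations are seen to coincide after normalization, the equivalence of the supercategories — and hence the asserted Morita super-equivalence — is formal.
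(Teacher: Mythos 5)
Your proposal follows essentially the same route as the paper's proof of Theorem~\ref{th:morita_q2}: normalize $\chi$, enumerate the possible centralizers $\mathfrak q(2)_{\chi_s}$ (all of $\mathfrak q(2)$; $\ev\g\cong\gl(2)$; the even Cartan; the $\gl(1|1)$-type subalgebra spanned by $h_1,h_2,E,F$; and $\ev\h\oplus KH_1\cong\mathfrak q(1)\oplus Kh_2$), compute the blocks, quivers and types $M/Q$ of the simples of $U_\chi(\mathfrak q(2)_{\chi_s})$ by direct small computations, and match them against the detailed analysis of $U_\chi(\mathfrak q(2))$ carried out in Sections~\ref{sec:q(2)I}--\ref{sec:q(2)II}; your case list and the type-swapping feature in the $(2|1)$-dimensional case agree with the paper. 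One detail needs correcting: in the mixed subcase of your case (ii) ($\chi_s$ a nonzero scalar, $\chi_n$ regular nilpotent) neither side is semisimple — by Proposition~\ref{prop:q(2)-mixed}, $U_\chi(\mathfrak q(2))\cong M_{4p}(K)^{\oplus p}\oplus M_{4p}(K[x]/\langle x^2\rangle)^{\oplus \frac{p(p-1)}{2}}$, and correspondingly $U_\chi(\gl(2))\cong M_{p}(K)^{\oplus p}\oplus M_{p}(K[x]/\langle x^2\rangle)^{\oplus \frac{p(p-1)}{2}}$, since for a regular nilpotent restriction to $\mathfrak{sl}(2)$ the baby Verma modules are simple but their projective covers are nontrivial self-extensions. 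The two sides still match, so your "becomes semisimple" claim is harmless for the conclusion, but it should be fixed before the comparison in that subcase is asserted.
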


\subsection{}

The paper is organized as follows. In Section~\ref{sec:basics}, we
set up some basic notations and constructions on the queer Lie
superalgebra $\g$. In Section~\ref{sec:semisimple}, we establish our
results on the irreducibility of the baby Verma modules and the
semisimplicity of the algebra $U_\chi (\g)$. The super KW property
and related construction are presented in
Section~\ref{sec:nilpotent}. Sections~\ref{sec:q(2)I} and \ref{sec:q(2)II}
are devoted to a
detailed study of representations of $\mathfrak{q}(2)$. In Section~\ref{sec:conjecture}
we formulate a conjecture of Morita super-equivalence for $\qn$
with general $p$-characters and prove it
for $\mathfrak{q}(2)$.

{\em Convention}: By  subalgebras, ideals, modules, and submodules
etc. we mean in the super sense unless otherwise specified. The
graded dimension of a superspace $V =\ev V \oplus \od V$ will be
denoted by $\sudim V = \dim \ev V \vert \dim \od V$.

%{\bf Acknowledgements.} This research is partially supported by NSA and NSF grants.

%%
%%
%%
\section{The preliminaries}\label{sec:basics}

\subsection{The Lie superalgebra $\mathfrak{q}(n)$}

Let $K$ be an algebraically closed field of characteristic $p>2$.
Let $K^{m|n}$ denote the superspace of dimension $m|n$, and $\gl
(m|n)$ the Lie superalgebra of linear transformations of $K^{m|n}$.
Choosing a homogeneous basis of $K^{m|n}$ we may regard $\gl (m|n)$
as the superalgebra of $(m+n)\times(m+n)$ matrices. In the case when $m=n$
consider an odd automorphism $P:K^{n|n}\rightarrow K^{n|n}$ with
$P^2=-1$. The linear transformations of $\mathfrak{gl}(n|n)$
preserving $P$ is a subalgebra of $\mathfrak{gl}(n|n)$, denoted by
$\mathfrak{q}(n)$. We have
$\mathfrak{q}(n)=\mathfrak{q}(n)_{\bar{0}}\oplus
\mathfrak{q}(n)_{\bar{1}}$, with $\mathfrak{q}(n)_{\bar{0}}$
isomorphic to the general linear Lie algebra $\mathfrak{gl}(n)$ and
$\mathfrak{q}(n)_{\bar{1}}$ isomorphic to the adjoint module of
$\mathfrak{gl}(n)$. Choosing $P$ to be the $2n\times 2n$ matrix
\begin{equation*}
\begin{pmatrix}0&I_n\\ -I_n&0 \end{pmatrix}
\end{equation*}
with $I_n$ denoting the identity $n\times n$ matrix, we may
identify $\mathfrak{q}(n)$ as the subalgebra of
$\mathfrak{gl}(n|n)$ consisting of $2n\times 2n$ matrices of the
form \eqref{q(n)}. The even elements of
$\mathfrak{q}(n)$ are those for which $B=0$, while the odd
elements are those for which $A=0$.

From now on set $\g = \mathfrak{q}(n)$. The Lie superalgebra $\g$ admits an odd
nondegenerate $\g$-invariant symmetric bilinear form, which is given by
\[
(x,y):= \text{otr}(xy) \text{ for }x, y \in \g,
\]
where $xy$ denotes the matrix product, and $\text{otr}$ denotes the
odd trace given by
\[
\text{otr}\begin{pmatrix}
A&B\\
B&A\\
\end{pmatrix} =\text{trace}(B).
\]
It is known that all Cartan subalgebras of $\g$ are conjugate to
the Lie superalgebra $\h = \ev \h \oplus \od \h$ of
matrices~(\ref{q(n)}) with both $A$ and $B$ diagonal (which will be referred to
as the standard Cartan). All Borel (i.e. maximal solvable) Lie subalgebras
of $\g$ are conjugate to the standard Borel subalgebra consisting
of matrices~(\ref{q(n)}) with both $A$ and $B$ upper triangular.
The roots of $\g$ (i.e. elements $\alpha \in \ev \h^*$, for which
$\g_{\alpha}:= \{x\in \g|\; [h,x]=\alpha(h)x, \forall h\in \ev
\h\} \neq 0$) are the same as $\gl(n)$: if we let
$\{\varepsilon_i\}$ be a basis of $\ev \h^*$ dual to the standard
basis $\{h_i\}$ of $\ev \h$, where $h_i$ is of the form
(\ref{q(n)}) with the $i$th diagonal entry of $A$ being $1$ and
zero elsewhere, then the roots are
$$
\Delta = \{\varepsilon_i - \varepsilon_j |\; 1 \leq i \neq j \leq
n\}.
$$
The dimension of each root space is equal to $1|1$, in
contrast to the $\gl(n)$ case.

Let us fix some notations. In various places of this paper, we need
to work with some fixed Borel subalgebra $\mathfrak{b}$. It
determines a system of positive roots which will be denoted by
$\Delta^+$; the corresponding simple system is denoted by $\Pi$.
Also let $\mathfrak{n}^+ =\mathfrak{n}^+_{\bar 0}
+\mathfrak{n}^+_{\bar 1} $ (respectively $\mathfrak{n}^-$) denote
the Lie subalgebra of positive (respectively negative) root vectors.

\subsection{The reduced enveloping algebras}

Recall that (c.f. e.g. \cite{WZ}) a restricted Lie superalgebra is
a Lie superalgebra $\g =\ev \g +\od \g$ whose even subalgebra $\ev
\g$ is a restricted Lie algebra with $p$th power map ${}^{[p]}:
\ev \g \ra \ev \g$, and the odd part $\od \g$ is a restricted $\ev
\g$-module by the adjoint action. Let $\g$ be a restricted Lie
superalgebra and $V$ be a simple $\g$-module. The elements $x^p -
\pth x$ for $x \in \ev \g$ in the universal enveloping algebra
$U(\g)$ are central. Thus by Schur's lemma, they act as scalars
$\zeta(x)$ on $V$, which can be written as $\chi_V(x)^p$ for some
$\chi_V \in \ev \g^*$. We call $\chi_V$ the {\em $p$-character} of
the module $V$.

Fix $\chi \in \ev \g^*$. Let $I_{\chi}$ be the ideal of $U(\g)$
generated by the even central elements $x^p - \pth x - \chi(x)^p$
for all $x\in \ev \g$. The quotient algebra $\rea{\chi}{\g} :=
U(\g)/I_{\chi}$ is called {\em the reduced enveloping
superalgebra} with $p$-character $\chi$. A $\g$-module with
$p$-character $\chi$ is the same as a $\rea{\chi}{\g}$-module. We
often regard $\chi \in \g^*$ by letting
$\chi (\od \g) =0$. %For $\chi = 0$, $U_0(\g)$ is called the {\em
%restricted enveloping superalgebra}.

Recall $\ev \g =\gl(n)$. Any $p$-character $\tilde{\chi}$ is
$\text{GL}(n)$-conjugate to a $p$-character $\chi$ with $\chi(\ev
\n^+)=0$, and $\rea{\tilde{\chi}}{\g}\cong \rea{\chi}{\g}$. This
allows us to restrict ourselves for the rest of the paper to
consider only $p$-characters $\chi$ with $\chi(\ev \n^+)=0$. A
$p$-character $\chi \in \ev \g^*$ is called {\em semisimple} if it
is $\text{GL}(n)$-conjugate to some $\xi \in \ev \g^*$ with $\xi(\ev
\n^+) = \xi(\ev \n^-)=0$, and $\chi$ is called {\em nilpotent} if it
is $\text{GL}(n)$-conjugate to some $\eta \in \ev \g^*$ with
$\eta(\ev \n^+) = \eta(\ev \h)=0$. This could also be viewed
alternatively as follows: the odd bilinear form $(,)$ allows one to
identify $\ev \g^*$ with $\od \g$ which has the same underlying
space as $\gl(n)$. Then the $p$-character $\chi$ is semisimple
(respectively nilpotent) if and only if the corresponding element in
$\gl(n)$ is semisimple (respectively nilpotent).

\subsection{The baby Verma modules}
Fix a triangular decomposition
\[
\g =\mathfrak{n}^- \oplus \mathfrak{h} \oplus \mathfrak{n}^+,
\]
and let $\mathfrak{b}= \mathfrak{h} \oplus \mathfrak{n}^+$. For
$\la \in \h_{\bar{0}}^*$ we may consider the symmetric bilinear
form on $\h_{\bar{1}}$ defined by
$$
(a|b)_{\la}:=\la([a,b]), \quad a,b\in\h_{\bar{1}}.
$$
Now if $\h'_{\bar{1}}\subset\h_{\bar{1}}$ is a maximal isotropic
subspace with respect to this bilinear form and let $\od \h''$ be
a complement of $\od \h'$ in $\od \h$ (i.e. $\od \h = \od \h'
\oplus \od \h''$), we may extend $\la$ to a one-dimensional
representation $K_{\la}$ of
$\h_{\bar{0}}+\h'_{\bar{1}}$ by letting $\od \h'$ act trivially.

Let $\chi \in \ev \g^*$ be such that $\chi(\ev \n^+)=0$. Set
\begin{align}
\Lambda_{\chi} &= \{ \lambda \in \ev{\mathfrak{h}}^* \vert \;
\lambda(h)^p - \lambda(\pth h) = \chi(h)^p \text{ for all } h \in
\ev{\mathfrak{h}} \}   \nonumber
\\
&=\{ (\la_1, \ldots, \la_n) \vert\; \la_i^p - \la_i =\chi(h_i)^p,
1\ \leq i \leq n\},   \label{weight}
\end{align}
where $\la_i= \la(h_i)$. The module $K_{\lambda}$ is a
$\rea{\chi}{\ev \h \oplus \od \h'}$-module if and only if $\la \in
\La_{\chi}$. We define an irreducible $\rea{\chi}{\h}$-module
\[
V_{\chi}(\la) = \rea{\chi}{\h} \otimes_{\rea{\chi}{\ev \h \oplus \od
\h'}}K_{\la}, \qquad \la \in \La_{\chi}.
\]
This module has an odd automorphism (or say, is of type $Q$) if
and only if the dimension of the quotient space $\h_{\bar{1}}/{\rm
ker}(\cdot|\cdot)_{\la}$ is odd. We extend this irreducible
$\rea{\chi}{\h}$-module to an irreducible
$\rea{\chi}{\mathfrak{b}}$-module by letting $\n^+$ act trivially.
%In this way one obtains all finite-dimensional
%irreducible $\rea{\chi}{\mathfrak{b}}$-modules.
Inducing further we obtain the {\em baby Verma module} of
$\rea{\chi}{\g}$ associated to $\la\in\h^*_{\bar{0}}$
$$
Z_{\chi}(\la) =\rea{\chi}{\g} \otimes_{\rea{\chi}{\mathfrak{b}}}
V_{\chi}(\la).
$$
We denote   $v_0 =1 \otimes 1 \in Z_{\chi}(\la)$. As a vector space,
we have
$$Z_{\chi}(\la) \cong U_\chi ( \n^-)
\otimes V_\chi (\la).
$$

%Any proper submodule of $Z_{\chi}(\la)$ has zero intersection with
%the subspace $\rea{\chi}{\h}v_0$, so among them, there is a maximal
%one. The quotient is a simple module denoted by $L_{\chi}(\la)$. We
%note that an odd automorphism of an irreducible $\h$-module descends
%to an odd automorphism of this irreducible quotient.

\subsection{$\rea{\chi}{\g}$ as symmetric algebra}

Recall that the supertrace of an endomorphism $X$ on a vector
space $\ev V \oplus \od V$ is defined to be $ \text{str}(X) =
\text{tr}(X|_{\ev V}) - \text{tr}(X|_{\od V}).$ An associative
superalgebra $A$ with a supersymmetric nondegenerate bilinear form
will be called a {\em symmetric (super)algebra}. One checks that
$
\text{str}(\ad x)=0,  \text{for all } x \in \ev \g.
$
Thus a variant of \cite[Prop.~2.7]{WZ} (also see \cite{FP})
applies and it gives the following.
\begin{proposition}\label{prop:sym}
The superalgebra $\rea{\chi}{\g}$ is symmetric for $\chi \in \ev
\g^*$.
\end{proposition}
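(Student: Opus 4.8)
The plan is to exhibit on $\rea{\chi}{\g}$ an explicit homogeneous, nondegenerate, supersymmetric, associative bilinear form, following the template of \cite[Prop.~2.7]{WZ} (and of \cite{FP} in the Lie algebra case). Fix an ordered homogeneous basis $e_1,\dots,e_r$ of $\ev\g$ and $f_1,\dots,f_s$ of $\od\g$ (so $r=s=n^2$ here). Then $\rea{\chi}{\g}$ has the PBW basis of restricted monomials $e_1^{a_1}\cdots e_r^{a_r}f_1^{b_1}\cdots f_s^{b_s}$ with $0\le a_i\le p-1$ and $b_j\in\{0,1\}$, among which $\mathbf{t}:=e_1^{p-1}\cdots e_r^{p-1}f_1\cdots f_s$ is the unique element of maximal degree, spanning the one-dimensional top piece of the PBW filtration $F_\bullet$. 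Let $\lambda\colon\rea{\chi}{\g}\to K$ be the coefficient of $\mathbf{t}$ in the PBW expansion and set $(a,b):=\lambda(ab)$. Associativity is automatic from $\lambda\bigl((ab)c\bigr)=\lambda\bigl(a(bc)\bigr)$, and the form is homogeneous of the same parity as $\mathbf{t}$, namely $\overline{s}=\overline{n}$: even for $n$ even and odd for $n$ odd. This parity feature, which does not occur for the basic classical Lie superalgebras, is precisely why one needs a ``variant'' of \cite[Prop.~2.7]{WZ} rather than that statement verbatim.

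Nondegeneracy comes from a Gram-matrix computation. For a restricted monomial $m=e_1^{a_1}\cdots e_r^{a_r}f_1^{b_1}\cdots f_s^{b_s}$ put $m^{\vee}:=f_s^{1-b_s}\cdots f_1^{1-b_1}e_r^{p-1-a_r}\cdots e_1^{p-1-a_1}$, a restricted monomial with $\deg m^{\vee}=\deg\mathbf{t}-\deg m$. Reordering $m\,m^{\vee}$ into PBW form gives $\lambda(m\,m^{\vee})=\pm1$, while $\lambda(m\,n)=0$ whenever $\deg m+\deg n<\deg\mathbf{t}$ (no PBW monomial of $m\,n$ can then reach the top degree) and also whenever $\deg m+\deg n=\deg\mathbf{t}$ but $n\neq m^{\vee}$ (some exponent slot then overshoots, forcing a reduction that drops the degree below $\deg\mathbf{t}$). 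Grouping the restricted monomials by degree exhibits the matrix $\bigl(\lambda(m\,n)\bigr)$ as block-triangular with signed permutation matrices as its diagonal blocks, hence invertible. So $(\,,\,)$ is nondegenerate and $\rea{\chi}{\g}$ is a Frobenius superalgebra.

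It remains to check supersymmetry, for which the natural device is the Nakayama automorphism $\nu$ of the Frobenius form $(\,,\,)$: the unique even algebra automorphism of $\rea{\chi}{\g}$ with $(a,b)=(-1)^{\bar a\bar b}(b,\nu(a))$ for homogeneous $a,b$. The form is supersymmetric exactly when $\nu=\mathrm{id}$, and since $\nu$ is an algebra map it is determined by $\nu|_{\g}$. The crucial computation is that the adjoint action $\ad x$ descends to $\rea{\chi}{\g}$, preserves the PBW filtration, and satisfies $\lambda\bigl((\ad x)(b)\bigr)=-\,\text{str}(\ad x)\,\lambda(b)$ for all $b$: indeed $\ad x$ acts on the one-dimensional top quotient $K\mathbf{t}\cong F_D/F_{D-1}$ by the scalar $-\,\text{str}(\ad x)$, since for $x\in\ev\g$ differentiating the factors $e_i^{p-1}$ in $\ad x(\mathbf{t})$ gives $(p-1)\operatorname{tr}(\ad x|_{\ev\g})\equiv-\operatorname{tr}(\ad x|_{\ev\g})$ and differentiating the factors $f_j$ gives $+\operatorname{tr}(\ad x|_{\od\g})$, while for $x\in\od\g$ both sides vanish by parity. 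Since $(\ad x)(b)=xb-(-1)^{\bar x\bar b}bx$, the identity above rearranges --- on the homogeneous $b$ for which the pairings in question can be nonzero --- to $\nu(x)=x-\text{str}(\ad x)\cdot 1$. By the identity $\text{str}(\ad x)=0$ for $x\in\ev\g$ recorded just above (and trivially for $x\in\od\g$), $\nu(x)=x$ for every generator, so $\nu=\mathrm{id}$, the form $(\,,\,)$ is supersymmetric, and $\rea{\chi}{\g}$ is a symmetric superalgebra.

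The main obstacle will be the middle of the last paragraph: pinning down exactly the scalar by which $\ad x$ acts on $K\mathbf{t}$, and carrying the Koszul signs correctly through the passage from $\lambda\bigl((\ad x)(b)\bigr)=-\text{str}(\ad x)\lambda(b)$ to the formula for $\nu$. Everything else --- the PBW basis, the top-coefficient functional, the block-triangularity of the Gram matrix --- is the standard Friedlander--Parshall machinery transplanted to the super setting, and the argument ultimately turns on the identity $\text{str}(\ad x)=0$ noted in the text just above.
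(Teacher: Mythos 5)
Your proposal is correct and follows essentially the same route as the paper: the paper simply observes that $\text{str}(\ad x)=0$ for $x\in\ev\g$ and invokes a variant of \cite[Prop.~2.7]{WZ} (following \cite{FP}), and what you have written is precisely an unpacking of that cited machinery --- the top-coefficient Frobenius form on the PBW basis, block-triangularity of the Gram matrix, and the Nakayama automorphism $\nu(x)=x-\text{str}(\ad x)\cdot 1$ trivializing because of the supertrace identity. Your scalar computation on the top filtration quotient and the resulting parity remark (the form is odd when $n$ is odd, which is exactly why a ``variant'' of the even-form statement is needed) are both accurate.
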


\section{Modular representations with semisimple $p$-characters}\label{sec:semisimple}
Throughout this section, we assume that $\chi \in \ev \g^*$ is
semisimple with $\chi(\ev \n^+) = \chi(\ev \n^-)=0$. The goal of
this section is to establish  criterions for the irreducibility of
the baby Verma module $Z_\chi(\la)$ and for the semisimplicity of the
algebra $U_\chi(\g)$.

\subsection{Some $\mathfrak{q}(2)$ calculations}
\label{subsec:semi-q(2):an example}

Let us fix some notation for $\mathfrak{q}(2)$ first. We consider
the standard generators of $\mathfrak{q}(2)$: $e, f, h_1, h_2, E,
F, H_1, H_2$, described symbolically as
\[
\left(
\begin{array}{cc|cc}
h_1 & e & H_1 & E\\
f   & h_2&F   & H_2\\
\hline H_1 & E & h_1 & e\\
F & H_2 & f & h_2
\end{array}
\right).
\]
This description can be read in the following way: to each symbol
corresponds a matrix of $0$'s and $1$'s, in which the $1$'s are
situated precisely at the places occupied by the corresponding
symbol.
For $n,m \geq 0$, and $ k \geq 1$, denote
\begin{align*}
T_{n,k} = \tbinom{n}{k}\frac{(n-1)!}{(k-1)!}, \qquad
[x]_m   = x(x-1)\cdots (x-m+1).
\end{align*}

\begin{lemma}\label{lem:q(2)-2}
For $0 \leq a \leq p-1$, the following identity holds in
$\rea{\chi}{\mathfrak{q}(2)}$:
\begin{align}
e^a f &= a e^{a-1}(h_1 - h_2 + a-1) + f e^a,\notag\\
e f^a &= a f^{a-1} (h_1 - h_2 -(a-1)) + f^a e,\notag\\
e^a f^{a-1} &= T_{a,1} e[h_1-h_2+1]_{a-1}+ T_{a,2} fe^2[h_1-h_2+1]_{a-2}\notag\\
& \quad + \ldots + T_{a,a}f^{a-1}e^a,\tag{\ref{lem:q(2)-2}a}\\
e^a F &= a e^{a-1}(H_1-H_2) + (a-1)a e^{a-2}E +
Fe^a.\tag{\ref{lem:q(2)-2}b}
\end{align}
\end{lemma}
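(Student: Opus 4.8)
The plan is to prove all four identities by induction on the exponent $a$, working entirely inside $\rea{\chi}{\mathfrak{q}(2)}$, and using only the defining commutation relations among the standard generators $e,f,h_1,h_2,E,F,H_1,H_2$ displayed in the symbolic matrix. Concretely, the relevant brackets are $[e,f]=h_1-h_2$, $[h_1-h_2,e]=2e$, $[h_1-h_2,f]=-2f$, together with the odd relations $[e,F]=H_1-H_2$, $[e,E]=0$ (so that $E$ commutes with $e$), and $[h_1-h_2,F]=-2F$, $[E,F]=h_1-h_2$ or similar, which are read off the matrix; I would first record these explicitly in a preliminary line, since everything else is formal manipulation from them.

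First I would establish the first identity $e^a f = ae^{a-1}(h_1-h_2+a-1)+fe^a$ by induction on $a$: the base case $a=1$ is just $[e,f]=h_1-h_2$, and the inductive step multiplies the $(a-1)$-case on the left by $e$ and commutes a single $e$ past $(h_1-h_2+a-2)$ using $[h_1-h_2,e]=2e$, collecting terms. The second identity $ef^a=af^{a-1}(h_1-h_2-(a-1))+f^a e$ is the mirror-image computation, inducting on $a$ and using $[h_1-h_2,f]=-2f$; alternatively it follows from the first by the anti-automorphism swapping $e\leftrightarrow f$ and $h_1-h_2\leftrightarrow -(h_1-h_2)$. For identity (\ref{lem:q(2)-2}a), expressing $e^a f^{a-1}$ as the stated sum $\sum_{k=1}^a T_{a,k}\,f^{k-1}e^{k}[h_1-h_2+1]_{a-k}$, I would induct on $a$, writing $e^a f^{a-1}=e\cdot(e^{a-1}f^{a-2})\cdot f$, substituting the inductive expression, then pushing the leftmost $e$ rightward through each $f^{k-1}$ (using the first identity repeatedly) and the rightmost $f$ leftward, and finally verifying that the binomial-type coefficients recombine into $T_{a,k}=\binom{a}{k}\frac{(a-1)!}{(k-1)!}$; this is the Pascal-type recursion $T_{a,k}=T_{a-1,k-1}+(\text{something})\cdot T_{a-1,k}$ that needs to be checked against the definition of $T_{n,k}$ and the way the falling factorials $[h_1-h_2+1]_{a-k}$ shift. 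For identity (\ref{lem:q(2)-2}b), $e^a F = ae^{a-1}(H_1-H_2)+\binom{a}{2}\cdot 2\,e^{a-2}E+Fe^a$, I would again induct on $a$ using $[e,F]=H_1-H_2$ and $[e,H_1-H_2]=2E$ (read off the matrix: commuting $e$ with $H_1-H_2$ produces a multiple of $E$), noting that $E$ is central with respect to $e$ so the $e^{a-2}E$ term is stable; the quadratic coefficient $(a-1)a$ is exactly the count of ways the extra $E$-producing commutator arises, and verifying $(a-1)+ (a-2)(a-1)=(a-1)a$ closes the induction.

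The main obstacle will be bookkeeping in identity (\ref{lem:q(2)-2}a): the presence of the falling-factorial polynomials $[h_1-h_2+1]_{m}$ means that when an $e$ is commuted past such a factor the argument shifts by $2$ per $e$, and one must carefully track how many $e$'s have been moved in each summand so the shifted falling factorials line up and the coefficients telescope into $T_{a,k}$. I would handle this by proving a slightly more general ``normal-ordering'' formula for $e^a f^b$ (or at least keeping the shift parameter explicit) so that the induction has enough room; once the general normal-ordered form is in hand, specializing $b=a-1$ and matching coefficients is mechanical. The other three identities are short and essentially self-checking inductions, so no serious difficulty is expected there; the only care needed is to confirm at the outset that all the required commutators among $e,f,h_i,E,F,H_i$ are exactly the ones encoded by the symbolic matrix, since the identities hold already in $U(\g)$ and descend to $\rea{\chi}{\g}$ without any $\chi$-dependence.
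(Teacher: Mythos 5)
Your proposal takes exactly the paper's route: the paper's entire proof is ``induction on $a$, with (\ref{lem:q(2)-2}a) using the earlier formulas,'' and your plan is the same inductions spelled out, including the correct handling of the shift of $[h_1-h_2+1]_m$ when commuting $e$ past it. Just fix two small slips when writing it up: the needed odd relation is $[H_1-H_2,e]=2E$ (i.e.\ $(H_1-H_2)e=e(H_1-H_2)+2E$, not $[e,H_1-H_2]=+2E$), and the closing count for (\ref{lem:q(2)-2}b) is $(a-2)(a-1)+2(a-1)=a(a-1)$, the extra term coming with the factor $2$ from this commutator.
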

\begin{proof}
Follows by induction on $a$. The proof of (\ref{lem:q(2)-2}a) uses
the earlier formulas.
\end{proof}

\begin{lemma}\label{lem:top}
The following identity holds in the
$\rea{\chi}{\mathfrak{q}(2)}$-module $Z_{\chi}(\la)$:
\begin{equation*}
e^{p-1}E f^{p-1}F v_0 = (p-1)![h_1-h_2-1]_{p-1}(h_1 +h_2) v_0,
\end{equation*}
where $v_0=1 \otimes 1 \in Z_\chi(\la)$.
\end{lemma}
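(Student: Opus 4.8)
The plan is to compute $e^{p-1}E f^{p-1}F v_0$ in $Z_\chi(\la)$ by pushing the positive root vectors $e, E$ to the right past the negative root vectors $f, F$, using the commutation formulas of Lemma~\ref{lem:q(2)-2}, and then using that $\n^+$ kills $v_0$ while $\h$ acts on $v_0$ through the weight $\la$ (more precisely, through the $V_\chi(\la)$-action, but only the even Cartan $h_1, h_2$ will survive, on which the action is scalar). First I would apply formula (\ref{lem:q(2)-2}b) to rewrite $e^{p-1}F = (p-1)e^{p-2}(H_1-H_2) + (p-2)(p-1)e^{p-3}E + F e^{p-1}$, so that
\[
e^{p-1}E f^{p-1}F v_0 = e^{p-1}E f^{p-1}\bigl((p-1)e^{p-2}(H_1-H_2) + (p-2)(p-1)e^{p-3}E + Fe^{p-1}\bigr)v_0.
\]
Since $e v_0 = 0$, $E v_0 = 0$, and $e^{p-1}$ annihilates anything that already carries $p-1$ powers of $e$ (because $e^{p-1}\cdot e = e^p = 0$ in $\rea{\chi}{\g}$ as $\chi(e)=0$), I expect the first two terms in the parenthesis to drop out: the $Fe^{p-1}v_0 = 0$ term is the only survivor after one also commutes $e$'s and $E$'s through $f^{p-1}$. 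Actually the bookkeeping is the reverse: one keeps $Fe^{p-1}v_0=0$, so a more careful route is needed — I would instead move $E$ and the outer $e^{p-1}$ rightward through $f^{p-1}F$, repeatedly applying the first three identities of Lemma~\ref{lem:q(2)-2} together with $[E,f^{p-1}]$-type relations (which come from $\{E,f\} = $ an even element; note $E,F$ are odd, so I must track signs carefully when commuting them past each other and past $f$).

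The cleanest organization: first commute $E$ to the right past $f^{p-1}$ and $F$. Using $[e,F]$ from (\ref{lem:q(2)-2}b) and the analogous $\{E,f\}$ and $\{E,F\}$ relations in $\mathfrak q(2)$, rewrite $E f^{p-1} F$ as a sum of terms, each of which ends in a positive root vector ($e$, $E$) acting on $v_0$ (hence zero) plus one "diagonal" term proportional to $f^{p-1}\cdot(\text{even Cartan element})\cdot(\text{odd Cartan element or }1)$. Then apply the outer $e^{p-1}$ using the key identity (\ref{lem:q(2)-2}a), namely $e^{p-1}f^{p-2} = \sum_k T_{p-1,k} f^{k-1}e^{k}[h_1-h_2+1]_{p-1-k}$, noting again that every term with a leftover $e$ dies on $v_0$ except the $k=1$ term which gives $T_{p-1,1}\,e\,[h_1-h_2+1]_{p-2}$ — and $e\,v_0 = 0$ too, so in fact one needs $e^{p-1}f^{p-1}$, producing by (\ref{lem:q(2)-2}a) (applied with $a=p$, or by one more commutation) a surviving scalar term $\propto (p-1)!\,[h_1-h_2+\text{const}]_{p-1}$ acting on $v_0$. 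The remaining odd-Cartan factor should collapse: $H_1+H_2$ acts on $v_0$ — but wait, $H_1, H_2$ are odd and need not act as scalars on $V_\chi(\la)$; however the combination appearing must be $h_1+h_2$ after using $\{F, \text{something}\} \sim h_1+h_2$, since $\text{otr}$ pairs $E\leftrightarrow$ even diagonal. I would track exactly which even element $(h_1+h_2)$ emerges from the anticommutators $\{E,F\}$, $\{E,f\}F + \ldots$; that is where the factor $(h_1+h_2)$ in the statement comes from.

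The main obstacle I anticipate is the sign and ordering bookkeeping for the odd elements $E, F$: since they are odd, $E f = -fE + \{E,f\}$ and $\{E,f\}, \{E,F\}, \{F,f\}$ are specific even root/Cartan vectors that I must read off from the matrix presentation, and a sign error anywhere derails the identification of the coefficient $(p-1)!$ and the Cartan factor. A secondary nuisance is making sure that at each stage the "junk" terms (those ending in $e$ or $E$ on $v_0$) genuinely vanish — this uses $e\,v_0 = E\,v_0 = 0$ and $e^p = E^2 = 0$ in $\rea{\chi}{\g}$ with $\chi$ vanishing on $\n^+$ — and that no term re-creates a positive-degree-zero obstruction. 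Once the dust settles, collecting the single surviving term and comparing with the right-hand side $(p-1)![h_1-h_2-1]_{p-1}(h_1+h_2)v_0$ should be a direct match, with the shift "$-1$" inside $[\,\cdot\,]_{p-1}$ coming from the $-(a-1)$ in the second identity of Lemma~\ref{lem:q(2)-2} applied with $a = p-1$ (or from the $H_1-H_2$ versus $h_1-h_2$ discrepancy after commuting $E$ through).
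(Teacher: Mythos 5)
Your overall strategy (commute $E$ rightward through $f^{p-1}F$, then apply $e^{p-1}$ via (\ref{lem:q(2)-2}a), discarding terms that end in positive root vectors acting on $v_0$) is the same as the paper's, but as written the plan has concrete defects at exactly the delicate points. First, your opening displayed manipulation is not a legitimate substitution: the identity (\ref{lem:q(2)-2}b) rewrites the product $e^{p-1}F$, but in $e^{p-1}Ef^{p-1}Fv_0$ the factor $F$ is not adjacent to $e^{p-1}$, so you cannot replace it the way you did; you notice something is off, but the fallback remark that ``one keeps $Fe^{p-1}v_0=0$'' is itself confused. Second, and more importantly, your expectation that after pushing $E$ through there is a \emph{single} surviving ``diagonal'' term is wrong. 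The correct first step is the identity $Ef^{p-1}F = f^{p-2}F(H_1-H_2) + f^{p-1}(h_1+h_2) - f^{p-1}FE$ in $\rea{\chi}{\mathfrak q(2)}$, so after killing the $E$-term on $v_0$ there remain \emph{two} contributions, $e^{p-1}f^{p-2}F(H_1-H_2)v_0$ and $e^{p-1}f^{p-1}(h_1+h_2)v_0$. Evaluating them with (\ref{lem:q(2)-2}a--b) gives $(p-1)!\,[h_1-h_2-1]_{p-2}(h_1+h_2)v_0$ and $(p-1)!\,[h_1-h_2]_{p-1}(h_1+h_2)v_0$ respectively, and only their sum, via $[x]_{p-1}+[x-1]_{p-2}=[x-1]_{p-1}$, produces the stated factor $[h_1-h_2-1]_{p-1}$. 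This two-term combination is entirely absent from your plan, and your proposed explanation of the shift ``$-1$'' misidentifies its source.

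Relatedly, your claim that ``every term with a leftover $e$ dies on $v_0$'' fails for the first contribution: there the leftover $e$ from the $k=1$ term of (\ref{lem:q(2)-2}a) hits $F(H_1-H_2)v_0$, not $v_0$, and one needs $eF = (H_1-H_2)+Fe$ together with $(H_1-H_2)^2 = h_1+h_2$ (from $[H_i,H_j]=2\delta_{ij}h_i$) to see that it contributes the factor $h_1+h_2$. You gesture at ``anticommutators $\{E,F\}$'' as the source of $h_1+h_2$, which accounts for the second contribution (indeed $[E,F]=h_1+h_2$), but the odd-Cartan square is the other essential mechanism and is not identified. Until the invalid first step is removed, the two surviving terms are both tracked, and these specific bracket identities are pinned down, the proposal does not yet yield the stated coefficient $(p-1)!\,[h_1-h_2-1]_{p-1}(h_1+h_2)$.
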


\begin{proof}
By a direct computation, we obtain the following identity in
$\rea{\chi}{\mathfrak{q}(2)}$:
\begin{equation*}
Ef^{p-1}F = f^{p-2}F(H_1 - H_2) + f^{p-1} (h_1 + h_2) - f^{p-1}FE.
%\tag{\ref{lem:q(2)-1}}
\end{equation*}
Applying this to the hight weight vector $v_0$ gives us
\[
e^{p-1}E f^{p-1}F v_0 = e^{p-1} f^{p-2} F(H_1 - H_2) v_0 +
e^{p-1}f^{p-1}(h_1 +h_2) v_0.
\]
We shall compute the two summands on the right hand side. Using
(\ref{lem:q(2)-2}a) in the second identity below, we have
\[
\begin{split}
e^{p-1}f^{p-1} (h_1 +h_2) v_0 & = (h_1 +h_2) e^{p-1}f^{p-1} v_0\\
& = \left(h_1 +h_2) ( T_{p-1,1}e[h_1-h_2 +1]_{p-2}fv_0 + w e^2
fv_0
 \right) \\
&= T_{p-1,1}(h_1 +h_2) ef[h_1-h_2-1]_{p-2}v_0 +0\\
& =
T_{p-1,1} (h_1 +h_2) [(h_1-h_2)+fe][h_1-h_2-1]_{p-2}v_0\\
&=T_{p-1,1}(h_1 +h_2) [h_1-h_2]_{p-1}v_0,
\end{split}
\]
where $w$ is some vector in $\rea{\chi}{\mathfrak{q}(2)}$.
On the other hand, we have by (\ref{lem:q(2)-2}a-b) that
\[
\begin{split}
&e^{p-1} f^{p-2} F(H_1 - H_2) v_0 \\
&= T_{p-1,1}e[h_1-h_2
+1]_{p-2}F(H_1-H_2)v_0 + u e^2 F(H_1-H_2)v_0\\
&=T_{p-1,1}eF[h_1-h_2-1]_{p-2}(H_1-H_2)v_0\\
& \qquad
+ u [2E + 2e(H_1-H_2) +Fe^2](H_1-H_2)v_0\\
&= T_{p-1,1}[(H_1 -H_2)
+Fe][h_1-h_2-1]_{p-2}(H_1-H_2)v_0\\
&=T_{p-1,1}[h_1-h_2-1]_{p-2}(h_1+h_2)v_0,
\end{split}
\]
where $u$ is some vector in $\rea{\chi}{\mathfrak{q}(2)}$.

It follows by definition that $T_{p-1,1}=(p-1)!$ and $[x]_{p-1} +
[x-1]_{p-2} =[x-1]_{p-1}$. Now the lemma follows from combining
the above two computations.
\end{proof}

Define a polynomial $\phi$ in two variables $x,y$ as follows:
\begin{eqnarray} \label{phi}
\phi(x,y)=(x+y)(x-y-1)(x-y-2)\cdots(x-y-(p-1)).
\end{eqnarray}

\begin{proposition}\label{prop:q(2)-simplicity}
Let $\g=\mathfrak{q}(2)$. Assume that $\chi\in \ev \g^*$ is semisimple
satisfying $\chi(e) =\chi(f) =0$ and let $\la =(\la_1,\la_2)\in
\La_{\chi}$. Then the baby Verma module $Z_{\chi}(\lambda)$ is
simple if and only if $\phi(\lambda_1,\la_2) \neq 0$.
\end{proposition}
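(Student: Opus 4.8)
The plan is to analyze the submodule structure of $Z_\chi(\la)$ directly, using the fact that $\g=\mathfrak q(2)$ and $\chi$ is semisimple with $\chi(e)=\chi(f)=0$, so that the standard generators act in a controlled way. First I would note that $Z_\chi(\la)\cong \rea\chi{\n^-}\otimes V_\chi(\la)$ as a vector space, where $\n^-$ is spanned by $f,F$; since $f^p$ and $F^2=-[F,F]/2$-type relations reduce to scalars, $\rea\chi{\n^-}$ has dimension $p|p$ (or thereabouts), and $V_\chi(\la)$ is one- or two-dimensional depending on the type of $\h$ at $\la$. The key structural point is that any proper submodule, being a $\rea\chi{\h}$-submodule, must contain a highest-weight-like vector in a weight space below the top, and the ``lowest'' weight space $f^{p-1}F\cdot V_\chi(\la)$ (the socle layer generated by the bottom of $\rea\chi{\n^-}$) plays the role of a cosocle/socle: $Z_\chi(\la)$ is simple if and only if applying $\n^+$ (i.e. $e,E$) to a bottom vector climbs all the way back to $v_0$ and generates everything.

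The heart of the argument is therefore the computation already packaged in Lemma~\ref{lem:top}: the element $e^{p-1}E f^{p-1}F$ acts on $v_0$ by the scalar $(p-1)![h_1-h_2-1]_{p-1}(h_1+h_2)$, and on $v_0$ the weight values give $h_i\mapsto \la_i$, so this scalar is, up to the nonzero factor $(p-1)!$, exactly $\phi(\la_1,\la_2)$ as defined in \eqref{phi} (note $[\la_1-\la_2-1]_{p-1}=(\la_1-\la_2-1)(\la_1-\la_2-2)\cdots(\la_1-\la_2-(p-1))$ and the extra factor $(\la_1+\la_2)$). Thus if $\phi(\la_1,\la_2)\neq 0$, then starting from the bottom vector $f^{p-1}F v_0$ we can recover (a nonzero multiple of) $v_0$, hence any nonzero submodule contains $v_0$ and so equals $Z_\chi(\la)$: this gives simplicity. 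Here I would need to also check the type-$Q$ subtlety — when $V_\chi(\la)$ is two-dimensional there is an odd endomorphism, and one must make sure the same scalar argument applies on the whole of $V_\chi(\la)$, not just on $v_0$; this follows because the odd automorphism of $V_\chi(\la)$ commutes with the $\g$-action up to sign and intertwines $v_0$ with the other basis vector.

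Conversely, if $\phi(\la_1,\la_2)=0$, I would exhibit a proper submodule. The natural candidate is the submodule generated by the bottom weight space, or rather its complement: one shows that $Z_\chi(\la)$ has a nonzero proper submodule by producing a singular vector. Concretely, when $(\la_1+\la_2)=0$ or $\la_1-\la_2\equiv j \pmod p$ for some $1\le j\le p-1$, a vector of the form $f^{k}(\ast)v_0$ (or $f^kF(\ast)v_0$) in an appropriate weight space is annihilated by $e$ and $E$, hence generates a proper submodule — the relevant vanishing is detected precisely by the factors of $\phi$, via the commutation formulas of Lemma~\ref{lem:q(2)-2}. The main obstacle, and where the real work lies, is this converse direction: verifying that at each zero of $\phi$ there genuinely is a highest-weight vector strictly below the top, which requires a careful case analysis (the factor $x+y$ versus the $p-1$ linear factors $x-y-i$ behave differently, and the type-$Q$ doubling of weight spaces complicates counting), rather than a single clean identity as in the forward direction. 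I would organize this by computing, for each candidate weight, the action of $e$ and $E$ on a general element of that weight space using Lemma~\ref{lem:q(2)-2}, and reading off when the kernel is nonzero.
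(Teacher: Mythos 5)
Your forward direction matches the paper's: combine the fact that every nonzero submodule contains $f^{p-1}Fv_0$ with the scalar identity of Lemma~\ref{lem:top}, whose value $(p-1)![\la_1-\la_2-1]_{p-1}(\la_1+\la_2)$ is $(p-1)!\,\phi(\la_1,\la_2)$ on $v_0$. But the converse is where your proposal has a genuine gap: you propose to exhibit, at each zero of $\phi$, a singular vector strictly below the top by a case-by-case computation with Lemma~\ref{lem:q(2)-2}, and you explicitly defer that analysis ("where the real work lies"). As written, the reducibility half is a plan, not a proof. Moreover, you miss that no such case analysis is needed: the same identity of Lemma~\ref{lem:top} gives the converse at once. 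If $\phi(\la)=0$, consider the nonzero submodule $M=\rea{\chi}{\g}\,f^{p-1}Fv_0$. Since $\chi(f)=0$ one has $f^p=0$ and $F^2=0$ in $\rea{\chi}{\g}$, so $f^{p-1}Fv_0$ is killed by $\n^-$, whence $M=\rea{\chi}{\n^-}\rea{\chi}{\h}\,\{e^aE^\ep f^{p-1}Fv_0\}$; a weight/PBW-degree inspection shows the only contribution to the span of $V_\chi(\la)$ comes from $e^{p-1}Ef^{p-1}Fv_0$, which is zero by hypothesis, so $M$ is proper and $Z_\chi(\la)$ is reducible. This is the paper's argument (the biconditional in its proof), and it is what makes the proposition a one-identity statement rather than the laborious singular-vector classification you envisage (which is essentially the content of the later $\mathfrak q(2)$ tables in Sections 5--6).

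Two further points. First, the assertion that every nonzero submodule contains the specific vector $f^{p-1}Fv_0$ is itself a lemma (the $n=2$ case of Lemma~\ref{lem:ss-lowest vector}, proved Rudakov-style by pushing an arbitrary nonzero vector down with $f$ and $F$); your justification via "the lowest weight space plays the role of a socle" is too vague to stand on its own, though the claim is the right one. Second, your "type-$Q$ subtlety" is a non-issue resolved for the wrong reason: one does not need any odd automorphism of $V_\chi(\la)$ (which fails to exist in the two-dimensional type-$M$ cases, e.g.\ $\la_1=\la_2\neq 0$); it suffices that $v_0=1\otimes 1_\la$ generates $V_\chi(\la)$ over $\rea{\chi}{\h}$, hence generates all of $Z_\chi(\la)$, so a submodule containing $v_0$ is everything.
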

\begin{proof}
We use the special case of Lemma~\ref{lem:ss-lowest
vector} for $n=2$ (which can also be proved directly) asserting
that any nontrivial submodule of $Z_{\chi}(\la)$ contains the
vector $f^{p-1}F v_0$. Now by Lemma~\ref{lem:top},
$Z_{\chi}(\lambda)$ is simple  if and only if $e^{p-1}E f^{p-1}F
v_0$ is a nonzero multiple of $v_0$, if and only if
$\phi(\lambda_1,\la_2) \neq 0$.
\end{proof}

\subsection{An irreducibility criterion of $Z_\chi(\la)$}
We return to the general case for $\g=\mathfrak{q}(n)$. For the
rest of this section, $\h$ will be the standard Cartan subalgebra
with  basis $\{h_i, H_i\}_{1\le i\le n}$. Recall that $\Delta^+$
is the set of positive roots associated to a fixed triangular
decomposition $\g= \n^- \oplus \h \oplus \n^+$, and that the
definition of $Z_{\chi}(\la)$ depends on $\Delta^+$. For
$\la=(\la_1, \ldots, \la_n) \in \La_{\chi}$ with $\la_i=\la(h_i)$,
put
\begin{equation} \label{eq:Phi}
\Phi(\la) := \prod_{1 \leq i<j \leq n} \phi(\la_i, \la_j).
\end{equation}

\begin{theorem}\label{thm:semisimple}
Assume that $\chi \in \ev \g^*$ is semisimple with $\chi(\ev
\n^+)=\chi(\ev \n^-) =0$ and let $\la \in \La_{\chi}$. Then the
baby Verma module $Z_{\chi}(\la)$ is simple if and only if
$\Phi(\la) \neq 0$.
\end{theorem}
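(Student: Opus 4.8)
The plan is to reduce the rank-$n$ statement to the rank-$2$ computation already carried out in Proposition~\ref{prop:q(2)-simplicity}, exploiting the $\gl(n)$-type root structure of $\g$. First I would establish the analogue of the ``lowest vector'' statement alluded to in the proof of Proposition~\ref{prop:q(2)-simplicity} (referred to there as Lemma~\ref{lem:ss-lowest vector}): namely that every nonzero $\rea{\chi}{\g}$-submodule of $Z_{\chi}(\la)$ contains the extremal vector $\bigl(\prod_{\alpha\in\Delta^+} f_\alpha^{p-1}F_\alpha\bigr) v_0$, where $f_\alpha, F_\alpha$ are the even and odd negative root vectors; this is the place where semisimplicity of $\chi$ and $\chi(\ev\n^\pm)=0$ are used, via a weight-space / PBW filtration argument on $\rea{\chi}{\n^-}\otimes V_\chi(\la)$, together with the fact that $V_\chi(\la)$ is an irreducible $\rea{\chi}{\h}$-module so that $\od\h$-action already forces everything down to $\ev\h$-weight vectors. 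So $Z_\chi(\la)$ is simple if and only if applying the ``raising'' operator $\prod_{\alpha\in\Delta^+} e_\alpha^{p-1}E_\alpha$ (in a suitable fixed order) back to this lowest vector returns a nonzero scalar multiple of $v_0$.

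The core computation is then to show that this round-trip scalar equals $\Phi(\la)=\prod_{i<j}\phi(\la_i,\la_j)$ up to a nonzero constant. The idea is that the $\mathfrak{sl}(2)$-triple (really the $\mathfrak q(2)$-subalgebra) attached to each positive root $\alpha=\varepsilon_i-\varepsilon_j$ contributes exactly the factor $\phi(\la_i,\la_j)$ computed in Lemma~\ref{lem:top}, and the cross-terms among different roots do not interfere. Concretely I would order the positive roots so that, when the raising operators are applied one $\mathfrak q(2)$-block at a time (peeling off the root vectors in the reverse order to which they appear in the lowest vector), at each stage one is essentially in the situation of a single $\mathfrak q(2)$ acting on a highest-weight vector of the appropriate $\ev\h$-weight, and Lemma~\ref{lem:top} applies verbatim (with $h_1-h_2$ replaced by $h_i-h_j$ and $h_1+h_2$ by $h_i+h_j$, and the $\la$-values read off accordingly). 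The commutation relations of Lemma~\ref{lem:q(2)-2}, applied inside each block, guarantee that the ``error'' terms $w, u$ that showed up there land in the span of vectors killed by what remains, exactly as in the rank-$2$ proof; and root vectors for $\alpha,\beta$ with $\alpha\ne\pm\beta$ either commute or produce root vectors that, after the remaining raising operators act, contribute nothing because the corresponding negative root vector has already been exhausted to the $(p-1)$st (even) or $1$st (odd) power. Thus the total scalar factors as a product over $i<j$ of $\phi(\la_i,\la_j)$, i.e.\ as $\Phi(\la)$ up to a unit, and $Z_\chi(\la)$ is simple iff $\Phi(\la)\ne 0$.

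The main obstacle I anticipate is the bookkeeping in this last step: verifying that the off-diagonal root interactions genuinely cancel requires choosing the order of the positive roots carefully (a convex/normal order adapted to a reduction of $\gl(n)$ to $\gl(2)$-blocks) and checking that, with that order, every ``unwanted'' monomial produced by Lemma~\ref{lem:q(2)-2}-type commutations in one block contains some $f_\beta$ or $F_\beta$ that has already been saturated — hence is zero in $Z_\chi(\la)$ — or some $e_\beta$, $E_\beta$ that will kill the resulting vector once the later raising operators are applied. One clean way to organize this is induction on $n$: split $\Delta^+ = \Delta^+_{\{1,\dots,n-1\}} \cup \{\varepsilon_i - \varepsilon_n : i<n\}$, apply the inductive hypothesis to the $\mathfrak q(n-1)$-part, and then handle the $n-1$ roots involving index $n$ one at a time by the single-$\mathfrak q(2)$ computation, using semisimplicity of $\chi$ (so $\chi(e_\alpha)=\chi(f_\alpha)=0$ for all $\alpha$) to ensure the root-vector powers behave as in the rank-$2$ case. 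A secondary technical point is the passage between the $\la$-dependent highest-weight space $V_\chi(\la)$ being multi-dimensional: but since all of $\od\g$ (in particular $\od\h$) acts on $Z_\chi(\la)$ and the operators $e_\alpha^{p-1}E_\alpha f_\alpha^{p-1}F_\alpha$ act by scalars on $V_\chi(\la)$ (being in $\rea{\chi}{\h}$-centralizing combinations after the dust settles), the scalar $\Phi(\la)$ is well-defined independently of the choice inside $V_\chi(\la)$, so the multi-dimensionality causes no real trouble beyond a remark.
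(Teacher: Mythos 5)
Your first step (every nonzero submodule contains $\prod_{\alpha\in\Delta^+}f_\alpha^{p-1}F_\alpha\,v_0$) is indeed the paper's Lemma~\ref{lem:ss-lowest vector}. But the core of your plan --- that applying $\prod_{\alpha}e_\alpha^{p-1}E_\alpha$ back to this vector can be evaluated one embedded $\mathfrak q(2)$ at a time, so that the resulting coefficient factors as $\prod_{i<j}\phi(\la_i,\la_j)$ --- is precisely the hard point, and the justification you sketch does not hold up. At an intermediate stage the vector being acted on, say $f_1^{p-1}F_1\cdots f_{j-1}^{p-1}F_{j-1}v_0$, is \emph{not} a highest weight vector for the $\mathfrak q(2)$ attached to $\alpha_j$: commutators such as $[e_{\vep_i-\vep_j},f_{\vep_i-\vep_k}]$ are nonzero negative root vectors acting nontrivially, so Lemma~\ref{lem:top} does not apply ``verbatim,'' and the heuristic that every unwanted monomial is killed because some $f_\beta,F_\beta$ is already saturated or some later raising operator annihilates it is an assertion, not an argument; controlling this proliferation is exactly the bookkeeping that a direct computation would have to carry out, and your induction on $n$ does not remove it since the root vectors for $\vep_i-\vep_n$ do not commute with the $\mathfrak q(n-1)$ part. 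The paper deliberately avoids this computation: its Lemma~\ref{lem:ss-F'} only records, by a grading argument, that the round trip yields $\tilde{\Phi}(\la)v_0+w$ with $w\in\La(\od\h'')_+v_0$ and a degree bound for the unknown polynomial $\tilde\Phi$; the identification of $\tilde\Phi$ with a multiple of $\Phi$ is then obtained \emph{indirectly}, by proving reducibility whenever some $\phi(\la_i,\la_j)=0$ --- first when $\vep_i-\vep_j$ is simple, via induction from the minimal parabolic $\mathfrak q(2)+\mathfrak b$ and Proposition~\ref{prop:q(2)-simplicity} (Lemma~\ref{lem:ss-minimal parabolic}), then for arbitrary roots via nonzero homomorphisms between baby Verma modules attached to different positive systems (Lemma~\ref{lem:ss-divisor}) --- followed by a divisibility-plus-degree comparison.

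There is a second gap, independent of the first. Your reduction ``$Z_\chi(\la)$ is simple if and only if the round trip returns a nonzero multiple of $v_0$'' is only proved in one direction. If the $v_0$-coefficient is nonzero, simplicity follows; but if it vanishes, the output may still be a nonzero element $w\in\La(\od\h'')_+v_0$, because the degree-zero part of the round-trip operator lies in $\rea{\chi}{\h}$, which acts irreducibly but \emph{not} by scalars on the multi-dimensional space $V_\chi(\la)$. In that case the submodule generated by the lowest vector still meets $V_\chi(\la)$ nontrivially and hence equals $Z_\chi(\la)$, so no reducibility conclusion can be drawn. Your closing remark that the operator acts by a scalar on $V_\chi(\la)$ ``after the dust settles'' is exactly what would need to be proved (in rank $2$ it is the content of the explicit Lemma~\ref{lem:top}, where the $\od\h''$-component is seen to cancel). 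So to complete your route you must either show the output has no $\La(\od\h'')_+$-component in general, or supply an independent proof of reducibility on the hyperplanes $\phi(\la_i,\la_j)=0$, which is what the paper's Lemmas~\ref{lem:ss-minimal parabolic} and~\ref{lem:ss-divisor} provide.
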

We need some preparations for the proof of the theorem. The height
of a root $\alpha \in \Delta^+$ is the sum of the coefficients in
the decomposition of $\alpha$ into simple roots. We index the
positive roots $\alpha_1, \ldots, \alpha_N,$ where
$
   N={n(n-1)}/{2},
$
enumerating first the roots of height $1$, then the roots of height
$2$, and so on.

For $\alpha = \varepsilon_k - \varepsilon_l$ ($k<l$), we use the
notation $e_{\alpha}$ (respectively $E_{\alpha}$) for the element of
the form~(\ref{q(n)}) with the $(k,l)$-entry of $A$ (respectively
$B$) being one and zero otherwise; also write $f_{\alpha}$
(respectively $F_{\alpha}$) for the element of the form~(\ref{q(n)})
with the $(l,k)$-entry of $A$ (respectively $B$) being one and zero
elsewhere. Further denote $e_i = e_{\alpha_i}$ (respectively $f_i =
f_{\alpha_i}$) and $E_i=E_{\alpha_i}$ (respectively $F_i =
F_{\alpha_i}$). Recall $N={n(n-1)}/{2}.$

\begin{lemma}\label{lem:ss-lowest vector}
Any nonzero submodule of a baby Verma module $Z_{\chi}(\la)$
contains the vector $f_1^{p-1}F_1f_2^{p-1}F_2\cdots f_N^{p-1}F_N
v_0$.
\end{lemma}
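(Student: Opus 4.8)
The plan is to prove the statement by induction on the number of positive root vectors one has to annihilate, exploiting the $\Z$-grading of $Z_\chi(\la)$ by the root lattice and the fact that $\h$ acts on $V_\chi(\la)$ through a ``local'' rank-$2$ calculation. Concretely, I would first reduce to a statement about the action of the root $\mathfrak{sl}(2)$-type subalgebras: for each positive root $\alpha = \varepsilon_k - \varepsilon_l$ with $k<l$, the span of $e_\alpha, f_\alpha, E_\alpha, F_\alpha$ together with the relevant Cartan elements $h_k, h_l, H_k, H_l$ generates a subalgebra isomorphic to $\mathfrak{q}(2)$ (modulo central/Cartan corrections). A nonzero submodule $M \subseteq Z_\chi(\la)$ is $\ev\h$-stable, hence graded by weights; take a nonzero weight vector $m \in M$ of maximal weight with respect to the standard partial order.

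First I would show that by repeatedly applying the raising operators $e_\alpha, E_\alpha$ for $\alpha$ of height $1$, then height $2$, and so on — that is, the $e_i, E_i$ in the enumerated order — one eventually lands in the top weight space, i.e. in $U_\chi(\h)\cdot v_0 = V_\chi(\la)$. The point is that each $e_i$ strictly raises the weight in one coordinate, and since $Z_\chi(\la)$ has finitely many weights, the process terminates; one must check the nonvanishing is not destroyed, which is where the maximality choice of $m$ and a careful bookkeeping of the $\mathfrak q(2)$-commutation relations from Lemma~\ref{lem:q(2)-2} come in. This shows $M \cap V_\chi(\la) \neq 0$. Since $V_\chi(\la)$ is an \emph{irreducible} $\rea\chi\h$-module, we get $V_\chi(\la) \subseteq M$, hence $v_0 \in M$; but then $M = Z_\chi(\la)$. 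So in fact \emph{every} nonzero submodule contains $v_0$ — wait, that would make all baby Verma modules simple, which is false. The resolution, and the genuinely delicate point, is that one cannot always reach the top: applying $e_i$ to a weight vector may kill it (the $\mathfrak q(2)$ relations show $e^p = $ scalar, and intermediate powers can annihilate). So the correct target is the \emph{lowest} weight vector $f_1^{p-1}F_1\cdots f_N^{p-1}F_N v_0$, which is reached from \emph{below}: I would instead start from a maximal-weight vector $m \in M$, show $m \in V_\chi(\la)$ forces too much, so instead argue that the lowest weight space of $Z_\chi(\la)$ is one-dimensional (spanned by the displayed vector, by the PBW description $Z_\chi(\la) \cong U_\chi(\n^-)\otimes V_\chi(\la)$ and the fact that $f_\alpha^{p-1}F_\alpha$ exhausts the bottom of $U_\chi(\mathfrak q(2)_\alpha^-)$), and that any nonzero submodule, being $\ev\h$-graded and finite-dimensional, contains a nonzero vector in that bottom space.

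The key steps, in order: (1) record the $\Z^n$-grading of $Z_\chi(\la)$ via $\ev\h$-weights and note $\dim$ of the lowest weight space is $1$ using PBW plus the $n=2$ analysis of $V_\chi(\la)$ restricted to each root subalgebra; (2) show that lowering — applying $f_i$'s and $F_i$'s in the reverse order $N, N-1, \ldots, 1$ — sends any nonzero highest-available weight vector of $M$ down to the bottom without vanishing, which requires that $f_\alpha^{p-1}F_\alpha$ acts injectively on the appropriate weight spaces; this is the $\mathfrak q(2)$ computation underlying Lemma~\ref{lem:top} and Proposition~\ref{prop:q(2)-simplicity}, applied rank by rank; (3) conclude the bottom vector lies in $M$ and is (up to scalar) the displayed one. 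The main obstacle is step (2): one must control the interaction between different root subalgebras — they do not commute, so lowering along $\alpha_N$ first may reintroduce components that interfere with subsequent lowering along $\alpha_{N-1}$, etc. I expect to handle this by the height filtration (process roots from highest height downward, or in the prescribed order so that each newly applied $f_i, F_i$ only involves commutators with operators already ``used up''), combined with the observation that off-diagonal corrections land in strictly lower-weight spaces and so can be absorbed inductively. The $\od\h$-part requires the extra care mentioned in the introduction: the vector $F_i$ (not just $f_i^{p-1}$) must be appended precisely because $\od\h$ acts nontrivially on $V_\chi(\la)$, and one needs the pairing $(\cdot|\cdot)_\la$ to see that $F_i$ hits the one-dimensional bottom of the relevant $V_\chi$-factor.
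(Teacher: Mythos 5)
Your eventual strategy---push an arbitrary nonzero vector of the submodule down to the extreme weight vector by applying the $f_i,F_i$ in the order $N,N-1,\ldots,1$, so that commutator corrections only involve higher-index negative root vectors whose slots are already full and hence get killed---is essentially the Rudakov-style argument the paper uses; the paper makes it precise via two families of identities (applying $f_j$ or $F_j$ to a PBW monomial whose slots $j+1,\ldots,N$ are already filled with $f_k^{p-1}F_k$ either exactly fills slot $j$, up to sign, or gives $0$). However, your proposal has a genuine gap at the final step: the claim that the lowest weight space of $Z_{\chi}(\la)$ is one-dimensional is false except when $\la=0$. By the PBW description, the weight space of weight $\la-p\sum_i\alpha_i$ is $f_1^{p-1}F_1\cdots f_N^{p-1}F_N\otimes V_{\chi}(\la)$, of dimension $\dim V_{\chi}(\la)=2^{r}$ with $r=\dim \od \h''$, because the odd Cartan elements $Y_1,\ldots,Y_r$ do not change the $\ev\h$-weight; this is exactly the multi-dimensionality of $V_\chi(\la)$ that the paper flags as requiring extra care. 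Consequently, even granting your lowering step, you only conclude that a nonzero submodule meets the bottom weight space, not that it contains the specific vector $f_1^{p-1}F_1\cdots f_N^{p-1}F_N v_0$; one needs a further argument (for instance, that every $h\in\h$ supercommutes with $f_1^{p-1}F_1\cdots f_N^{p-1}F_N$ modulo terms annihilated by $f_j^{p}=0$ and $F_j^{2}=0$, so the bottom space is an irreducible $\rea{\chi}{\h}$-module, or, as in the paper, identities formulated so as to keep control of the $V_{\chi}(\la)$-component throughout).

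Two secondary points. First, the non-vanishing of the lowering process is not a matter of ``$f_\alpha^{p-1}F_\alpha$ acting injectively on the appropriate weight spaces'': this is false in general (it annihilates anything already divisible by $f_\alpha$) and is not the relevant statement; what is needed is that, given the PBW expansion of the chosen nonzero vector, one can choose the multiplier monomial so that a selected term is exactly filled while the remaining terms are killed by overfilling---this is precisely what the paper's two displayed claims supply, and your appeal to Lemma~\ref{lem:top} is misplaced, since that computation (of $e^{p-1}Ef^{p-1}Fv_0$) enters only the irreducibility criterion via Lemma~\ref{lem:ss-F'}, whereas the present lemma uses nothing beyond commutation among negative root vectors together with $f_j^p=0$ and $F_j^2=0$. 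Second, the first half of your write-up (raising to the top weight space) is a false start that you yourself retract; as written it contributes nothing and should be deleted.
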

\begin{proof}
The proof is similar to that of \cite[Proposition~4]{Rud}. For the
sake of the reader, we outline the main steps. We show first that
\begin{align*}
f_j \cdot f_1^{i_1}F_1^{\ep_1} \cdots
f_{j-1}^{i_{j-1}}F_{j-1}^{\ep_{j-1}}f_j^{p-1}F_j^{\ep_{j}} f_{j+1}^{p-1}F_{j+1}
\cdots f_N^{p-1}F_N  v_0 &=0, \\
F_j \cdot f_1^{i_1}F_1^{\ep_1} \cdots
f_{j-1}^{i_{j-1}}F_{j-1}^{\ep_{j-1}}\ f_j^{i_j}\ F_j f_{j+1}^{p-1}F_{j+1}
\cdots f_N^{p-1}F_N  v_0 &=0,
\end{align*}
where $0\leq  i_s \leq p-1$ and $\ep_s =0,1$.
Then we show that
\begin{align*}
f_j \cdot   f_1^{i_1}F_1^{\ep_1}& \cdots
f_j^{i_j-1}F_j^{\ep_j}f_{j+1}^{p-1}F_{j+1}
\cdots f_N^{p-1}F_N  v_0  \\
&= f_1^{i_1}F_1^{\ep_1} \cdots f_j^{i_j}F_j^{\ep_j}
f_{j+1}^{p-1}F_{j+1}
\cdots f_N^{p-1}F_N  v_0, \\
F_j \cdot  f_1^{i_1}F_1^{\ep_1} & \cdots
f_j^{i_j}f_{j+1}^{p-1}F_{j+1}
\cdots f_N^{p-1}F_N  v_0  \\
 &= \pm f_1^{i_1}F_1^{\ep_1} \cdots
f_j^{i_j}F_j f_{j+1}^{p-1}F_{j+1}
\cdots f_N^{p-1}F_N  v_0,
\end{align*}
where $0\leq  i_s \leq p-1$ and $\ep_s =0,1$ for $s \leq j$. Now the
lemma follows easily from the above claims.
\end{proof}

A PBW  basis for a baby Verma module
$Z_{\chi}(\la)$ is given by
\[
f_1^{a_1}F^{\ep_1}\cdots f_N^{a_N}F_N^{\ep_N}Y_1^{\tau_1}\cdots
Y_{r}^{\tau_r}v_0 \quad  (0 \leq a_i \leq p-1; \; \ep_j, \tau_k=0,1)
\]
where $\{Y_1, \ldots Y_r\}$ is a basis for $\od \h''$ which we
recall is a complement of $\od \h'$ in $\od \h$. Let $\La(\od
\h'')_+$ be the linear span of $Y_1^{\tau_1}\cdots
Y_{r}^{\tau_r}$, not all $\tau_1, \ldots \tau_r$ being zero.

\begin{lemma}\label{lem:ss-F'}
Let $\la \in \La_{\chi}$. The following identity holds in
$Z_{\chi}(\la)$:
\[
e_1^{p-1}E_1\cdots e_N^{p-1}E_N\cdot f_1^{p-1}F_1\cdots
f_N^{p-1}F_Nv_0 =\tilde{\Phi}(\la_1, \ldots, \la_n)v_0 + w,
\]
for some polynomial $\tilde{\Phi}$ in $n$ variables of degree at
most $p^N$  and some   $w \in \La(\od \h'')_+  v_0$.
\end{lemma}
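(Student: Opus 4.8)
The plan is to compute the action of the raising operator $e_1^{p-1}E_1\cdots e_N^{p-1}E_N$ on the ``lowest vector'' $f_1^{p-1}F_1\cdots f_N^{p-1}F_Nv_0$ by peeling off one root at a time, proceeding from the highest-indexed root $\alpha_N$ down to $\alpha_1$, exactly in parallel with the $\mathfrak q(2)$ computation in Lemma~\ref{lem:top} and the structure of Lemma~\ref{lem:ss-lowest vector}. First I would isolate the basic commutation identities for a single $\mathfrak{sl}(2)$-triple-plus-odd-partner inside $\g$, namely the analogues of Lemma~\ref{lem:q(2)-2}(a)--(b) and the displayed formula $Ef^{p-1}F = f^{p-2}F(H_1-H_2) + f^{p-1}(h_1+h_2) - f^{p-1}FE$ in the proof of Lemma~\ref{lem:top}, but now relative to the root $\alpha_k = \varepsilon_{i}-\varepsilon_{j}$ with $h_1-h_2$ replaced by $h_i-h_j$ and $h_1+h_2$ replaced by $h_i+h_j$. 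The point is that for a fixed root, the subalgebra generated by $e_\alpha, f_\alpha, E_\alpha, F_\alpha$ together with the relevant toral elements behaves like $\mathfrak q(2)$, so the one-variable polynomial $\phi(\la_i,\la_j)$ from \eqref{phi} emerges each time we move a pair $(e_\alpha^{p-1}E_\alpha)$ past the corresponding $(f_\alpha^{p-1}F_\alpha)$.

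Next I would run the induction on $N$ (equivalently on the number of roots peeled off). At the inductive step, applying $e_k^{p-1}E_k$ to the partially reduced expression, one commutes $e_k$ and $E_k$ to the right through all the lower-indexed factors $f_1^{p-1}F_1\cdots f_{k-1}^{p-1}F_{k-1}$; since $e_k$ corresponds to a root of height $\ge$ that of the $\alpha_s$ with $s<k$ only when the roots are ``compatible,'' the commutators $[e_k,f_s]$, $[e_k,F_s]$, $[E_k,f_s]$, $[E_k,F_s]$ either vanish or produce root vectors of strictly larger height, which when pushed onto $v_0$ are killed (as $\n^+v_0=0$) or produce terms that land in lower-order pieces of the PBW filtration. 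The surviving contribution is the ``diagonal'' one where $e_k^{p-1}E_k$ meets its own $f_k^{p-1}F_k$, which by the $\mathfrak q(2)$-type computation contributes the scalar factor $(p-1)![\,\la_i-\la_j-1\,]_{p-1}(\la_i+\la_j) = \phi(\la_i,\la_j)$ up to nonzero constants, together with a remainder involving the odd toral elements $H_i, H_j$; collecting those remainders across all steps gives the $w\in\La(\od\h'')_+v_0$ term. The degree bound $\le p^N$ is immediate since each of the $N$ factors contributes a one-variable polynomial of degree $\le p$ (namely $\phi$, of degree $p$).

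The main obstacle I anticipate is bookkeeping the cross terms: when $e_k$ or $E_k$ is commuted past $f_s, F_s$ for $s\ne k$, the relevant roots $\alpha_k=\varepsilon_i-\varepsilon_j$ and $\alpha_s=\varepsilon_a-\varepsilon_b$ may share an index (e.g.\ $j=a$), so $[e_{\alpha_k},f_{\alpha_s}]$ is a nonzero root vector $e_{\varepsilon_i-\varepsilon_b}$ of height $\mathrm{ht}(\alpha_k)+\mathrm{ht}(\alpha_s)$, and one must argue carefully that every such byproduct, after being carried all the way to the right, either annihilates $v_0$ (being a positive root vector or an $E$-type element in $\n^+$) or else, after re-expressing via the PBW basis, contributes only to $w$ and does not feed back into the coefficient of $v_0$. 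The clean way to handle this is to exploit the height ordering of the $\alpha_i$: since we peel off $e_N^{p-1}E_N$ first and it is a highest root, anything it commutes into is a positive root vector of height $>\mathrm{ht}(\alpha_N)$, hence lies in $\n^+$ and kills $v_0$; inductively, at stage $k$ the only factors to the right of $f_k^{p-1}F_k$ are $f_{k+1}^{p-1}F_{k+1}\cdots$ which have already been ``used up'' against their own $e$'s, and the only factors to the left are lower-height, so the same argument applies after one checks (this is the technical heart, and is exactly the content mirrored in the claims of Lemma~\ref{lem:ss-lowest vector}) that passing $e_k, E_k$ to the left through $f_1^{p-1}F_1\cdots f_{k-1}^{p-1}F_{k-1}$ produces no $v_0$-component beyond the diagonal one. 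Granting that, assembling the product of the $N$ scalar factors $\phi(\la_i,\la_j)$ (one per root) recovers $\tilde\Phi$, with $\tilde\Phi(\la_1,\dots,\la_n)$ agreeing with $\Phi(\la)=\prod_{i<j}\phi(\la_i,\la_j)$ up to a nonzero scalar, completing the proof; this identification of $\tilde\Phi$ with $\Phi$ up to a unit is presumably what the next result in the paper records and is what ultimately yields Theorem~\ref{thm:semisimple}.
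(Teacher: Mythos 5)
Your proposal has a genuine gap, and it also misses the much softer argument that the lemma actually needs. The statement only asserts that the left-hand side has the form $\tilde\Phi(\la)v_0+w$ with $w\in\La(\od\h'')_+v_0$ and $\tilde\Phi$ a polynomial of bounded degree; it does \emph{not} assert that $\tilde\Phi$ is $\prod_{i<j}\phi(\la_i,\la_j)$ up to a unit. The paper gets the lemma essentially for free from a weight consideration: the operator $e_1^{p-1}E_1\cdots e_N^{p-1}E_N f_1^{p-1}F_1\cdots f_N^{p-1}F_N$ has weight zero, so the resulting vector lies in the $\la$-weight space of $Z_\chi(\la)$, which is exactly $\La(\od\h'')v_0$; hence it equals $c\,v_0+w$ as claimed, and $c$ is a polynomial in $\la_1,\dots,\la_n$ of degree bounded by the number of letters being commuted (the $\mathfrak q(2)$ computation of Lemma~\ref{lem:top} supplying the model). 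The identification of $\tilde\Phi$ with $\Phi$ up to a nonzero scalar is then deduced \emph{indirectly} in the proof of Theorem~\ref{thm:semisimple}, via Lemmas~\ref{lem:ss-lowest vector} and~\ref{lem:ss-divisor} (divisibility) together with the degree comparison -- precisely because a direct computation of $\tilde\Phi$ is hard. You are trying to do that hard computation inside this lemma, which is both unnecessary and, as written, not justified.

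The unproved step is exactly the one you flag as ``the technical heart'': that the cross terms contribute nothing to the coefficient of $v_0$ beyond the diagonal product. Your supporting heuristic is also incorrect: for $s\neq k$ the bracket $[e_{\alpha_k},f_{\alpha_s}]$ has weight $\alpha_k-\alpha_s$, so its height is the \emph{difference} of the heights, not something ``strictly larger''; and even when this byproduct is a positive root vector it does not meet $v_0$ immediately -- it must still be transported through the remaining factors $f_{s'}^{p-1}F_{s'}$, where iterated brackets can land in the Cartan subalgebra and act by scalars depending on $\la$, thereby feeding back into the coefficient of $v_0$. Controlling all such contributions is a genuine Rudakov-style bookkeeping argument (compare the proof of Lemma~\ref{lem:ss-lowest vector}, which does this only for the lowering operators), and ``granting that'' is where your proof stops being a proof. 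If you keep your route, you must carry out this cancellation analysis in full; if you only want the lemma as stated, replace the whole computation by the weight-space argument above and a count of the degree in $\la$ of the structure constants produced by the commutations.
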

\begin{proof}
Follows by a weight consideration and Lemma~\ref{lem:top}.
%Define the degree of the vector $f_1^{i_1}F_1^{\epsilon_1}\cdots
%f_t^{i_t}F_t^{\epsilon_t}v_0$ to be the vector $\sum (i_k +
%\epsilon_k) \alpha_k \in \mathbb{Q}^{n}$. A homogeneous element is a
%linear combination of basis elements of the same degree. If $\deg x
%= \alpha$, then either $\deg f_{-\beta}x$ (respectively $ \deg
%F_{-\beta}x$) is $\alpha - \beta$ or $f_{-\beta}x$ (respectively
%$F_{-\beta}x$) is zero. So the action here equals a action of a
%polynomial in $h_i$ and $H_j$. The assertion on the high degree
%is obvious.
\end{proof}

Recall the function $\phi$ is defined in \eqref{phi}.
\begin{lemma}\label{lem:ss-minimal parabolic}
Assume $\alpha =\varepsilon_i - \varepsilon_j$ is a simple root of
$\Delta^+$. If $\phi(\la_i, \la_j)=0$, then the baby Verma module
$Z_{\chi}(\la)$ is reducible.
\end{lemma}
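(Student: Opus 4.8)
The plan is to reduce the statement to the rank-one computation of Proposition~\ref{prop:q(2)-simplicity} by working inside the minimal parabolic attached to $\alpha$. Write $\alpha=\varepsilon_i-\varepsilon_j$ and let $\mathfrak{q}_\alpha\subseteq\g$ be the sub-superalgebra spanned by $e_\alpha,f_\alpha,E_\alpha,F_\alpha$ together with $h_i,h_j,H_i,H_j$. One checks that $\mathfrak{q}_\alpha$ is a restricted subalgebra isomorphic to $\mathfrak{q}(2)$, that $\chi$ restricts on it to a semisimple $p$-character $\chi_\alpha$ with $\chi_\alpha(e_\alpha)=\chi_\alpha(f_\alpha)=0$, and that $\la$ restricts to $\la_\alpha=(\la_i,\la_j)\in\Lambda_{\chi_\alpha}$.

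First I would set up the parabolic. Since $\alpha$ is simple, $\p:=\mathfrak{b}+\g_{-\alpha}$ is a Lie subalgebra of $\g$ with Levi decomposition $\p=\mathfrak{l}\oplus\mathfrak{u}$, where $\mathfrak{l}=\h+\g_\alpha+\g_{-\alpha}$ and the nilradical $\mathfrak{u}=\bigoplus_{\beta\in\Delta^+\setminus\{\alpha\}}\g_\beta$ is an ideal satisfying $[\mathfrak{u},\g_{-\alpha}]\subseteq\mathfrak{u}$; moreover $\mathfrak{l}=\mathfrak{q}_\alpha\times\mathfrak{t}'$ as a direct product of Lie superalgebras, with $\mathfrak{t}'=\bigoplus_{k\neq i,j}(Kh_k+KH_k)$ centralizing $\mathfrak{q}_\alpha$. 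By transitivity of induction, $Z_\chi(\la)\cong\rea{\chi}{\g}\otimes_{\rea{\chi}{\p}}Z^{\p}_\chi(\la)$ where $Z^{\p}_\chi(\la):=\rea{\chi}{\p}\otimes_{\rea{\chi}{\mathfrak{b}}}V_\chi(\la)$; a short induction on the $\g_{-\alpha}$-degree (using $[\mathfrak{u},\g_{-\alpha}]\subseteq\mathfrak{u}$ and that $\mathfrak{u}\subseteq\mathfrak{b}$ annihilates $V_\chi(\la)$) shows $\mathfrak{u}$ acts by zero on $Z^{\p}_\chi(\la)$, so $Z^{\p}_\chi(\la)$ is the inflation of the $\mathfrak{l}$-module $Z^{\mathfrak{l}}_\chi(\la):=\rea{\chi}{\mathfrak{l}}\otimes_{\rea{\chi}{\mathfrak{b}\cap\mathfrak{l}}}V_\chi(\la)$. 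Since $\rea{\chi}{\g}$ is free over $\rea{\chi}{\p}$ by PBW, the functor $\rea{\chi}{\g}\otimes_{\rea{\chi}{\p}}(-)$ is exact and carries a nonzero proper submodule to a nonzero proper submodule, so it suffices to prove that $Z^{\mathfrak{l}}_\chi(\la)$ is a reducible $\mathfrak{l}$-module.

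For the last step I would restrict $Z^{\mathfrak{l}}_\chi(\la)$ to $\mathfrak{q}_\alpha$. As $\mathfrak{b}\cap\mathfrak{l}=\h+\g_\alpha=(\mathfrak{b}\cap\mathfrak{q}_\alpha)\times\mathfrak{t}'$, the $\mathfrak{t}'$-factor cancels and $Z^{\mathfrak{l}}_\chi(\la)|_{\mathfrak{q}_\alpha}\cong\rea{\chi_\alpha}{\mathfrak{q}_\alpha}\otimes_{\rea{\chi_\alpha}{\mathfrak{b}\cap\mathfrak{q}_\alpha}}U$, where $U$ is $V_\chi(\la)$ viewed as a $(\mathfrak{b}\cap\mathfrak{q}_\alpha)$-module ($\g_\alpha$ acting by $0$, $h_i,h_j$ by $\la_i,\la_j$). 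Applying induction to $U\twoheadrightarrow U/\mathrm{rad}\,U\cong V_{\chi_\alpha}(\la_\alpha)^{\oplus m}$ for some $m\geq1$ (the quotient being a multiple of the unique simple $(\mathfrak{b}\cap\mathfrak{q}_\alpha)$-module with this central character), right-exactness gives a surjection $Z^{\mathfrak{l}}_\chi(\la)|_{\mathfrak{q}_\alpha}\twoheadrightarrow\bigl(Z^{\mathfrak{q}_\alpha}_{\chi_\alpha}(\la_\alpha)\bigr)^{\oplus m}$ onto a power of the $\mathfrak{q}(2)$-baby Verma module $Z^{\mathfrak{q}_\alpha}_{\chi_\alpha}(\la_\alpha)$. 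If $\phi(\la_i,\la_j)=0$ then Proposition~\ref{prop:q(2)-simplicity} makes $Z^{\mathfrak{q}_\alpha}_{\chi_\alpha}(\la_\alpha)$ reducible, while the $n=2$ case of Lemma~\ref{lem:ss-lowest vector} forces every nonzero submodule of it to contain the single nonzero vector $f_\alpha^{p-1}F_\alpha v_0$; hence it cannot be a direct sum of two or more simple submodules, so it is not semisimple, and neither is its power nor, therefore, $Z^{\mathfrak{l}}_\chi(\la)|_{\mathfrak{q}_\alpha}$. Consequently $\mathrm{rad}_{\mathfrak{q}_\alpha}\bigl(Z^{\mathfrak{l}}_\chi(\la)\bigr)$ is a nonzero proper submodule for the $\mathfrak{q}_\alpha$-action; being a radical, it is stable under all $\mathfrak{q}_\alpha$-module endomorphisms, in particular under the action of $\mathfrak{t}'$ (which commutes with $\mathfrak{q}_\alpha$), hence it is an $\mathfrak{l}$-submodule. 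Therefore $Z^{\mathfrak{l}}_\chi(\la)$, and so $Z_\chi(\la)$, is reducible.

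The main obstacle is this final propagation from $\mathfrak{q}_\alpha$ to $\mathfrak{l}$: because $\h$ is non-abelian and both $V_\chi(\la)$ and its $\mathfrak{q}(2)$-shadow $V_{\chi_\alpha}(\la_\alpha)$ may be of type $Q$, the restriction of $V_\chi(\la)$ to $\mathfrak{b}\cap\mathfrak{q}_\alpha$ need be neither simple nor semisimple, so one cannot obtain an $\mathfrak{l}$-submodule merely by tensoring a proper $\mathfrak{q}(2)$-submodule with a $\mathfrak{t}'$-module. The device above sidesteps this: only the \emph{existence} of a nonzero $\mathfrak{t}'$-stable $\mathfrak{q}_\alpha$-submodule is required, and the radical for the $\mathfrak{q}_\alpha$-action supplies one as soon as $Z^{\mathfrak{l}}_\chi(\la)|_{\mathfrak{q}_\alpha}$ is known to be non-semisimple --- which is precisely what Proposition~\ref{prop:q(2)-simplicity} and Lemma~\ref{lem:ss-lowest vector} give. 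The remaining points --- that $\p$ is a subalgebra, that $\mathfrak{u}$ is an ideal with $[\mathfrak{u},\g_{-\alpha}]\subseteq\mathfrak{u}$, and that $\mathfrak{u}$ acts trivially on $Z^{\p}_\chi(\la)$ --- are routine but use the simplicity of $\alpha$ in an essential way.
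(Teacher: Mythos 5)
Your argument is correct, and its skeleton coincides with the paper's: reduce to the rank-one statement (Proposition~\ref{prop:q(2)-simplicity}) by passing through the minimal parabolic $\p=\mathfrak{q}_\alpha+\mathfrak{b}$ attached to the simple root $\alpha$, and then use transitivity of induction together with exactness/freeness of $\rea{\chi}{\g}\otimes_{\rea{\chi}{\p}}(-)$ to lift a proper submodule of the parabolically induced module to a proper submodule of $Z_\chi(\la)$. Where you genuinely diverge is in how reducibility at the parabolic level is extracted from the $\mathfrak{q}(2)$ computation, i.e.\ in how the multi-dimensionality of $V_\chi(\la)$ is handled. The paper chooses the maximal isotropic subspace $\od \h'$ and the complement $\od \h''$ \emph{compatibly} with the embedded $\mathfrak{q}(2)$ (so that $Y_1$ spans $\od \h''\cap\mathfrak{q}(2)$ and is $(\,|\,)_\la$-orthogonal to $Y_2,\dots,Y_r$), which produces an identification of $\p$-modules $Z^{\p}_\chi(\la)\cong Z^{\mathfrak{q}(2)}_\chi(\la_i,\la_j)\otimes\La(\widetilde{\h}_{\bar{1}}'')$, and a proper $\mathfrak{q}(2)$-submodule then tensors up directly to a proper $\p$-submodule. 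You avoid any compatible choice of polarization: you surject the restriction to $\mathfrak{q}_\alpha$ onto copies of $Z^{\mathfrak{q}_\alpha}_{\chi_\alpha}(\la_i,\la_j)$ via the head of $V_\chi(\la)|_{\mathfrak{b}\cap\mathfrak{q}_\alpha}$, use the $n=2$ case of Lemma~\ref{lem:ss-lowest vector} to see that this baby Verma module, being reducible with a unique minimal submodule, is non-semisimple, and then take the $\mathfrak{q}_\alpha$-radical as the required $\mathfrak{l}$-submodule. This is a legitimate, somewhat more choice-free variant; it costs an extra appeal to Lemma~\ref{lem:ss-lowest vector}, which the paper's proof of Proposition~\ref{prop:q(2)-simplicity} already uses, so nothing circular enters. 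One small imprecision to fix: the odd elements $H_k$ ($k\neq i,j$) of $\mathfrak{t}'$ are not $\mathfrak{q}_\alpha$-module endomorphisms in the ordinary sense, only odd ones (they super-commute with $\mathfrak{q}_\alpha$), so ``stable under all module endomorphisms'' is not quite the right justification; the correct one-line argument is that the radical equals $J\cdot M$ with $J$ the (graded) Jacobson radical of $\rea{\chi_\alpha}{\mathfrak{q}_\alpha}$, and any homogeneous operator super-commuting with this algebra maps $J\cdot M$ into $J\cdot M$, hence the radical is $\mathfrak{t}'$-stable and is an $\mathfrak{l}$-submodule.
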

\begin{proof}
For notational convenience, we assume without loss of generality
that $\Delta^+ =\{\vep_i - \vep_j \vert \; 1\leq i < j \leq n\}$
and that $\alpha= \vep_1 - \vep_2$. Then we may choose $\od \h'$,
$\od \h''$ and a basis $\{Y_1, \ldots, Y_r\}$ of $\od \h''$ to be
compatible with the embedded $\mathfrak{q}(2)$ corresponding to
the root $\vep_1 -\vep_2$, so that $\{Y_1\}$ is a basis for $\od
\h'' \cap \mathfrak{q}(2)$ and that $Y_1$ is orthogonal to the
span $ \widetilde{\h}_{\bar{1}}''$ of $Y_2, \ldots, Y_r$ with
respect to $(|)_{\la}$.

Now consider the minimal parabolic subalgebra $\p=\mathfrak q(2)
+\mathfrak{b}$, and the induced $\p$-module $Z^{\p}_{\chi}(\la)
=\rea{\chi}{\p}\otimes_{\rea{\chi}{\mathfrak{b}}} V_{\chi}(\la)$.
One can also write $\p = \mathfrak q(2) \oplus \widetilde{\h} \oplus
\widetilde{\n}^+$, where $\widetilde{\h}$ is the span of $h_i, H_i$
$(3\le i \le n)$, and $\widetilde{\n}^+$ is the span of all positive
root vectors except the ones for $\vep_1 - \vep_2$. Note that
$[\mathfrak q(2), \widetilde{\h}] =0$.
Since $\phi(\la_1,\la_2) = 0$, the baby Verma module $Z^{\mathfrak
q(2)}_{\chi}(\la_1,\la_2)$ of $\mathfrak q(2)$ is reducible by
Proposition~\ref{prop:q(2)-simplicity}. Then the $\p$-module
$Z^{\p}_{\chi}(\la)$ is also reducible, thanks to the identification
of the $\mathfrak p$-modules
$$Z^{\p}_{\chi}(\la) \cong
Z^{\mathfrak q(2)}_{\chi}(\la_1,\la_2) \otimes \La
(\widetilde{\h}_{\bar{1}}'')
$$
where the right-hand side carries a trivial action of $\widetilde{\n}^+$.
By the
transitivity of  induced modules we have
$$
Z_{\chi}(\la) \cong \rea{\chi}{\g}
\otimes_{\rea{\chi}{\p}}Z^{\p}_{\chi}(\la),
$$
and then the reducibility of $Z_{\chi}(\la)$ follows from the
reducibility of $Z^{\p}_{\chi}(\la)$.
\end{proof}

\begin{lemma}\label{lem:ss-divisor}
If $\phi(\la_i, \la_j)=0$ for some $1 \leq i \neq  j\leq n$, then
the baby Verma module $Z_{\chi}(\la)$ is reducible.
\end{lemma}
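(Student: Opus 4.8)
The plan is to reduce the assertion to the simple-root case already settled in Lemma~\ref{lem:ss-minimal parabolic}, by changing the triangular decomposition of $\g$. First I would record that $\phi(x,y)$ and $\phi(y,x)$ have the same zero locus in $K^2$: the factor $x+y$ is symmetric, while $\prod_{k=1}^{p-1}(x-y-k)$ and $\prod_{k=1}^{p-1}(y-x-k)$ either both fail to vanish (when $x-y\notin\mathbb{F}_p$) or both vanish precisely when $x\neq y$ (when $x-y\in\mathbb{F}_p$; here one uses that $p$ is odd, so $\{1,\dots,p-1\}$ and $\{-1,\dots,-(p-1)\}$ are the same set of nonzero residues). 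Hence the hypothesis $\phi(\la_i,\la_j)=0$ is symmetric in $i$ and $j$, and we may assume $\alpha:=\varepsilon_i-\varepsilon_j\in\Delta^+$; moreover, conjugating by a suitable permutation matrix --- an automorphism of $\g$ that carries baby Verma modules to baby Verma modules, a semisimple $\chi$ with $\chi(\ev\n^{\pm})=0$ to one of the same type, and the hypothesis to itself up to relabeling the $\la_i$ --- we may further assume $\Delta^+$ is the standard positive system. If $\alpha$ is a simple root we are done by Lemma~\ref{lem:ss-minimal parabolic}, so assume it is not.

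Next choose a positive system $\widetilde\Delta^+$ of $\g$ in which $\alpha$ is simple (order $\varepsilon_1,\dots,\varepsilon_n$ so that $\varepsilon_i$ immediately precedes $\varepsilon_j$). Since $\chi$ is semisimple with $\chi(\ev\n^+)=\chi(\ev\n^-)=0$, it vanishes on every even root vector, hence also on the even nilradicals attached to $\widetilde\Delta^+$; and the quantities $\la_i=\la(h_i)$ and the polynomial $\phi$ do not refer to $\Delta^+$ at all. So Lemma~\ref{lem:ss-minimal parabolic}, applied to the triangular decomposition given by $\widetilde\Delta^+$ and to the simple root $\alpha$, shows that the baby Verma module $\widetilde Z_\chi(\la)$ built from $\widetilde\Delta^+$ (for the same $\g$, $\chi$, and $\la$) is reducible.

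It remains to pass from $\widetilde Z_\chi(\la)$ back to $Z_\chi(\la)$, i.e.\ to know that for a semisimple $\chi$ with $\chi(\ev\n^{\pm})=0$ and fixed $\la\in\La_\chi$, reducibility of the baby Verma module does not depend on the choice of positive system. This is the crux of the argument. I would establish it by joining $\Delta^+$ to $\widetilde\Delta^+$ through a chain of positive systems, consecutive members differing by a single simple reflection $s_\gamma$, and, at each step, comparing the two baby Verma modules by factoring their defining inductions through the minimal parabolic $\p_\gamma$ (whose Levi is $\cong\mathfrak q(2)\times\mathfrak q(1)^{\,n-2}$): the two relevant Borels of $\p_\gamma$ differ only in the $\mathfrak q(2)$-direction, and for $\mathfrak q(2)$ the baby Verma modules attached to the two opposite Borels are reducible for exactly the same weights, by Proposition~\ref{prop:q(2)-simplicity} together with the symmetry of $\phi$ noted above --- the analogue of Rudakov's argument \cite{Rud,FP}. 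The delicate point is that induction from a subalgebra preserves reducibility only in one obvious direction, so this comparison must be arranged with care. An alternative route, which confines the difficulty to a single pair: for every pair $(i,j)$ whose root is not the highest root, one can keep $\Delta^+$ fixed and instead inflate, along the standard parabolic with Levi of type $\mathfrak q(j-i+1)$, the baby Verma module $Z^{\mathfrak q(j-i+1)}_\chi(\la_i,\dots,\la_j)$ --- reducible by induction on $n$, with base case $n=2$ being Proposition~\ref{prop:q(2)-simplicity} --- exactly as in the proof of Lemma~\ref{lem:ss-minimal parabolic}; only the highest-root pair $(1,n)$ then still requires the change-of-positive-system step above.
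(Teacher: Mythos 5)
Your reduction to the case where $\vep_i-\vep_j$ is simple for some positive system is the right strategy (it is the paper's), but the step you yourself call the crux --- transferring reducibility from the baby Verma module $\widetilde Z_\chi(\la)$ built from $\widetilde\Delta^+$ back to $Z_\chi(\la)$ --- is left unproved, and the mechanism you sketch for it cannot work as stated. Factoring the induction through the minimal parabolic $\p_\gamma$ transfers reducibility in one direction only: a nonzero proper submodule at the $\p_\gamma$-level induces (by freeness of $\rea{\chi}{\g}$ over $\rea{\chi}{\p_\gamma}$) a nonzero proper submodule of the $\g$-module, but irreducibility at the $\p_\gamma$-level says nothing about the induced module. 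At a chain step where the reflected simple root $\gamma$ involves a pair $(\la_k,\la_l)$ with $\phi(\la_k,\la_l)\neq 0$ --- the typical situation --- both $\mathfrak q(2)$ baby Verma modules for the two Borels of $\p_\gamma$ are irreducible, and your comparison then yields no information about either $Z^{\Delta^+}_\chi(\la)$ or $Z^{s_\gamma\Delta^+}_\chi(\la)$; so the asserted step-by-step independence of reducibility on the positive system is not established. Your alternative route does not remove the difficulty: it only confines it to the highest-root pair, which is already untreated for $\mathfrak q(3)$ (the pair $(1,3)$), so the lemma remains unproved.

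The missing idea is to compare adjacent baby Verma modules by an explicit homomorphism rather than through a parabolic; this is what the paper does and what a precise ``Rudakov-type'' argument amounts to. If $\beta$ is simple for $\Delta^+_{(0)}$ and $\Delta^+_{(1)}=s_\beta(\Delta^+_{(0)})$, then in $Z^{(1)}_\chi(\la)$ (same Cartan $\h$, same $\la$, same $V_\chi(\la)$, hence the same dimension) the vector $e_\beta^{p-1}E_\beta\, v_0^{(1)}$ is nonzero of weight $\la$ (as $p\beta=0$ in characteristic $p$) and is annihilated by $e_\alpha, E_\alpha$ for all $\alpha\in\Pi_{(0)}$; this produces a nonzero $\g$-homomorphism $\psi\colon Z^{(0)}_\chi(\la)\to Z^{(1)}_\chi(\la)$. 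If $Z^{(0)}_\chi(\la)$ were irreducible, $\psi$ would be injective, hence by equality of dimensions an isomorphism, forcing $Z^{(1)}_\chi(\la)$ to be irreducible; contrapositively, reducibility of $Z^{(1)}_\chi(\la)$ implies reducibility of $Z^{(0)}_\chi(\la)$. Chaining such reflections from $\Delta^+$ to a positive system in which $\vep_i-\vep_j$ is simple and invoking Lemma~\ref{lem:ss-minimal parabolic} (which rests on Proposition~\ref{prop:q(2)-simplicity}) then finishes the proof. Without this homomorphism-plus-equal-dimension step, or some substitute for it, your argument has a genuine gap.
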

\begin{proof}
In this proof, we shall denote $\Delta^+$ and $\Pi$ by
$\Delta_{(0)}^+$ and $\Pi_{(0)}$ respectively, and write
$Z^{(0)}_{\chi}(\la)=Z_{\chi}(\la)$.
Let $\beta_1 \in \Pi^+_{(0)}$, and
$\Delta_{(1)}^+=s_{\beta_1}(\Delta_{(0)}^+)$. Let
$Z^{(1)}_{\chi}(\la)$ denote the baby Verma module with respect to
$\Delta^+_{(1)}$, that is, it is generated by a high weight vector
$v_0^{(1)}$ with respect to $\Delta^+_{(1)}$. Then
$e_{\beta_1}^{p-1}E_{\beta_1}v^{(1)}_0$ is a weight vector of
weight $\la$, and it is annihilated by  $e_{\alpha}, E_{\alpha}$ for
all $\alpha \in \Pi_{(0)}$. So there is a non-zero
$\g$-homomorphism
$
\psi_1: Z_{\chi}^{(0)}(\la) \ra Z_{\chi}^{(1)}(\la).
$

In general, we can find a sequence of positive roots $\beta_1,
\ldots, \beta_t$ such that $\beta_{k+1}$ $(0 \le k \le t-1$) is
a simple root for the positive system $\Delta_{(k)}^+ :=
s_{\beta_k}(\Delta_{(k-1)}^+)$, and that $\vep_i- \vep_j$ is a
simple root for the positive system $\Delta_{(t)}^+ =
s_{\beta_t}(\Delta_{(t-1)}^+)$. By the previous paragraph, there
exist non-zero $\g$-homomorphisms
$$
\psi_i: Z_{\chi}^{(i-1)}(\la) \ra Z_{\chi}^{(i)}(\la), \quad i=1,
\ldots, t.
$$
Since $Z_{\chi}^{(i-1)}(\la)$ and $Z_{\chi}^{(i)}(\la)$, $1 \leq i
\leq t$, have the same dimension, the reducibility of
$Z_{\chi}^{(i-1)}(\la)$ follows from the reducibility of
$Z_{\chi}^{(i)}(\la)$ via $\psi_i$. By Lemma~\ref{lem:ss-minimal
parabolic}, $Z_{\chi}^{(t)}$ is reducible, hence $Z_{\chi}(\la) =
Z_{\chi}^{(0)}(\la)$ is also reducible.
\end{proof}

\begin{proof}[Proof of Theorem~\ref{thm:semisimple}]
If $\Phi(\la)=0$, then $\phi(\la_i,\la_j)=0$ for some $1 \leq
i\neq j\leq n$. By Lemma~\ref{lem:ss-divisor}, $Z_{\chi}(\la)$ is
reducible. Moreover, by Lemmas~\ref{lem:ss-lowest vector}
and~\ref{lem:ss-F'}, $\tilde{\Phi}(\la)=0$. Hence, the polynomial $\tilde{\Phi}$
is always divisible by $\Phi$. Conversely, assume that
$Z_{\chi}(\la)$ is reducible. By Lemmas~\ref{lem:ss-lowest vector}
and~\ref{lem:ss-F'}, $\tilde{\Phi}(\la)=0$. Since $\Phi$ divides
$\tilde{\Phi}$ and $\deg \Phi \geq \deg{\tilde{\Phi}}$, we
conclude that $\Phi(\la)=0$.
\end{proof}
The following corollary is immediate from
Theorem~\ref{thm:semisimple}.
\begin{corollary}
Assume that $\chi \in \ev \g^*$ is semisimple with $\chi(h_1)=\cdots
= \chi(h_n)$ (for example, $\chi=0$). For $\la= (a, \ldots, a) \in \La_{\chi}$ with $a \neq
0$, the baby Verma module $Z_{\chi}(\la)$ is simple.
\end{corollary}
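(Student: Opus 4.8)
The plan is to deduce this directly from Theorem~\ref{thm:semisimple}. Since we are given $\la=(a,\dots,a)\in\La_{\chi}$, the module $Z_{\chi}(\la)$ is simple as soon as $\Phi(\la)\neq 0$, and by the product formula \eqref{eq:Phi} it suffices to show that the single factor $\phi(a,a)$ is nonzero in $K$.

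First I would evaluate $\phi$ at $x=y=a$ using \eqref{phi}:
\[
\phi(a,a) = (a+a)(-1)(-2)\cdots(-(p-1)) = 2a\,(-1)^{p-1}(p-1)!\,.
\]
Because $p>2$, the integer $p-1$ is even, so $(-1)^{p-1}=1$ and hence $\phi(a,a)=2a\,(p-1)!$.

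Next I would observe that each factor of this product is a unit in $K$: the scalar $2$ is nonzero because the characteristic of $K$ is $p>2$; the scalar $(p-1)!$ is nonzero by Wilson's theorem (it reduces to $-1$ modulo $p$); and $a\neq 0$ by hypothesis. Therefore $\phi(a,a)\neq 0$, so $\Phi(\la)=\phi(a,a)^{n(n-1)/2}\neq 0$, and Theorem~\ref{thm:semisimple} yields the asserted simplicity of $Z_{\chi}(\la)$.

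The argument is entirely routine — this is why the statement is recorded as a corollary rather than a theorem. The only points worth flagging are that one genuinely uses $p$ odd twice, once to force $(-1)^{p-1}=1$ and once to make $2$ invertible, and that the non-vanishing of $(p-1)!$ in $K$ rests on Wilson's theorem rather than being automatic.
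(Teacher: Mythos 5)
Your proposal is correct and follows exactly the route the paper intends: the corollary is stated as immediate from Theorem~\ref{thm:semisimple}, and your evaluation $\phi(a,a)=2a\,(-1)^{p-1}(p-1)!\neq 0$ (hence $\Phi(\la)\neq 0$) is precisely the required check. The only cosmetic remark is that neither Wilson's theorem nor the parity of $(-1)^{p-1}$ is really needed, since $(p-1)!$ is a product of the units $1,\dots,p-1$ of $\mathbb{F}_p$ and a sign cannot affect non-vanishing.
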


\subsection{A semisimplicity criterion of $U_\chi (\g)$}

\begin{theorem}\label{semisimpleUg}
Let $\chi$ be semisimple with $\chi(\ev \n^+)=\chi(\ev \n^-)=0$. The
algebra $\rea{\chi}{\g}$ is semisimple if and only if $\, 0 \neq\chi(h_i)
\neq \pm \chi(h_j)$ for all $1 \leq i\neq j \leq n$.
\end{theorem}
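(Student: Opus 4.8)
The plan is to combine the irreducibility criterion of Theorem~\ref{thm:semisimple} with a count of the dimensions of the indecomposable projective $\rea{\chi}{\g}$-modules. First I would record the standard structure theory of reduced enveloping (super)algebras with a triangular decomposition (cf.\ \cite{WZ,FP,Jan}): since $\chi(\ev\n^+)=0$, $\n^+$ acts nilpotently on every $\rea{\chi}{\g}$-module, so every simple module is a quotient of some $Z_\chi(\la)$, $\la\in\La_\chi$; by Lemma~\ref{lem:ss-lowest vector} each $Z_\chi(\la)$ has a unique minimal submodule, hence is indecomposable; and $\la$ is the unique maximal $\ev\h$-weight of $Z_\chi(\la)$, so the heads $L(\la):=Z_\chi(\la)/\mathrm{rad}\,Z_\chi(\la)$ ($\la\in\La_\chi$) form an irredundant complete list of the simple $\rea{\chi}{\g}$-modules. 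For any finite-dimensional (super)algebra $A$ one has $\dim A\ge\sum_\la m_\la\dim L(\la)$, where $m_\la$ is the multiplicity of $L(\la)$ in $A/\mathrm{rad}\,A$, with equality exactly when every $L(\la)$ is projective, i.e.\ $A$ is semisimple; over $K$ one has $m_\la=\dim L(\la)$ if $L(\la)$ is of type $M$ and $m_\la=\tfrac12\dim L(\la)$ if $L(\la)$ is of type $Q$. Finally, an odd automorphism of $V_\chi(\la)$ induces one of $Z_\chi(\la)$, and conversely an odd automorphism of $Z_\chi(\la)$ preserves $\ev\h$-weight spaces and so restricts to one of $V_\chi(\la)$; thus $Z_\chi(\la)$ is of type $Q$ precisely when $\dim\big(\od\h/\ker(\cdot|\cdot)_\la\big)$ is odd.

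Next I would translate $\Phi$ into conditions on $\chi$. For $\la\in\La_\chi$ the coordinate $\la_i$ ranges exactly over the Artin--Schreier coset $\la_i^{(0)}+\mathbb F_p$ of a root of $X^p-X=\chi(h_i)^p$. Since $\phi(x,y)=(x+y)\prod_{k=1}^{p-1}(x-y-k)$ is a product of linear forms in $x+y$ and $x-y$, and $\la_i^{(0)}\pm\la_j^{(0)}$ is a root of $X^p-X=(\chi(h_i)\pm\chi(h_j))^p$, which lies in $\mathbb F_p$ iff $\chi(h_i)=\mp\chi(h_j)$, one gets $\phi(\la_i,\la_j)\ne0$ for all $\la\in\La_\chi$ iff $\chi(h_i)\ne\pm\chi(h_j)$. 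By Theorem~\ref{thm:semisimple} this means: every $Z_\chi(\la)$ is simple iff $\chi(h_i)\ne\pm\chi(h_j)$ for all $i\ne j$. Note that $\phi$ only sees $\la_i\pm\la_j$, so the condition $\chi(h_i)\ne0$ is invisible here — it has to be recovered from the dimension count.

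Now for the two directions. If $\rea{\chi}{\g}$ is semisimple, each $Z_\chi(\la)$ is indecomposable and semisimple, hence simple, so $\chi(h_i)\ne\pm\chi(h_j)$ ($i\ne j$) and $L(\la)=Z_\chi(\la)$, whence $\dim\rea{\chi}{\g}=\sum_\la m_\la\dim Z_\chi(\la)$. Conversely, if $\chi(h_i)\ne0$ and $\chi(h_i)\ne\pm\chi(h_j)$ for all $i\ne j$, then again every $Z_\chi(\la)$ is simple, $L(\la)=Z_\chi(\la)$, and it suffices to check $\sum_\la m_\la\dim Z_\chi(\la)=\dim\rea{\chi}{\g}$, which forces $\mathrm{rad}\,\rea{\chi}{\g}=0$. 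Both directions therefore reduce to a single computation. Since $(H_i|H_j)_\la=2\la_i\,\delta_{ij}$, the rank $r_\la$ of $(\cdot|\cdot)_\la$ equals $\#\{i:\la_i\ne0\}$, so $\dim V_\chi(\la)=2^{\lceil r_\la/2\rceil}$ and $Z_\chi(\la)$ has type $Q$ iff $r_\la$ is odd; with $\dim\rea{\chi}{\n^-}=p^N2^N$ a routine check yields $m_\la\dim Z_\chi(\la)=p^{2N}2^{2N}\,2^{r_\la}$ in both parities. Hence
\[
\sum_{\la\in\La_\chi} m_\la\dim Z_\chi(\la)=p^{2N}2^{2N}\sum_{\la\in\La_\chi}2^{r_\la}=p^{2N}2^{2N}\prod_{i=1}^n c_i ,
\]
where $c_i=2p$ if $\chi(h_i)\ne0$ and $c_i=2p-1$ if $\chi(h_i)=0$ (exactly one of the $p$ values of $\la_i$ is $0$ when $\chi(h_i)=0$). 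Since $\dim\rea{\chi}{\g}=p^{n^2}2^{n^2}=p^{2N}2^{2N}(2p)^n$ (using $2N=n^2-n$), equality holds iff every $c_i=2p$, i.e.\ iff $\chi(h_i)\ne0$ for all $i$. Together with the previous paragraph this proves the theorem.

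The delicate point — and the only place where the non-abelian Cartan of $\qn$ genuinely enters — is the super-bookkeeping in the first paragraph: fixing the multiplicity $m_\la$ (the factor $\tfrac12$ for type $Q$) and matching the $M/Q$ type of $Z_\chi(\la)$ with the parity of the rank $r_\la$. That is exactly what forces the extra clause $0\ne\chi(h_i)$; the rest is standard structure theory together with Artin--Schreier arithmetic.
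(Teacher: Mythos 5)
Your proposal is correct and follows essentially the same route as the paper's proof: reduce semisimplicity of $\rea{\chi}{\g}$ to simplicity of all baby Verma modules plus a Wedderburn/dimension-counting argument (which the paper only gestures at and you carry out explicitly via the $2^{r_\la}$ bookkeeping, thereby recovering the extra condition $\chi(h_i)\neq 0$), and translate ``$\Phi(\la)\neq 0$ for all $\la\in\La_{\chi}$'' into $\chi(h_i)\neq\pm\chi(h_j)$ by the same Artin--Schreier observation $(\la_i\pm\la_j)^p-(\la_i\pm\la_j)=(\chi(h_i)\pm\chi(h_j))^p$. The only step you state more casually than it deserves is that an odd automorphism of a simple $Z_{\chi}(\la)$ restricts to $V_{\chi}(\la)$ --- the $\la$-weight space of $Z_{\chi}(\la)$ is strictly larger than $1\otimes V_{\chi}(\la)$ (e.g.\ it contains $f_\alpha^{p-1}F_\alpha\otimes V_{\chi}(\la)$), so one should argue via the $\n^+$-invariants or Frobenius reciprocity --- but this type-matching is exactly the bookkeeping the paper itself asserts without proof in its ``type $Q$ for $n$ odd, type $M$ for $n$ even'' parenthetical.
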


\begin{proof}
Since $\chi(\ev \n^+)=\chi(\ev \n^-)=0$, each baby Verma module has
a unique irreducible quotient, which will be denoted by
$L_\chi(\la)$. The simple $\rea{\chi}{\g}$-modules $L_{\chi}(\la)$
and $L_{\chi}(\la')$ for $\la \neq \la'$ are non-isomorphic, and so
there are $p^n$ simple $\rea{\chi}{\g}$-modules. By Wedderburn
Theorem and a dimension counting argument, $\rea{\chi}{\g}$ is
semisimple if and only if all the baby Verma modules $Z_{\chi}(\la)$
for  $\la \in \Lambda_{\chi}$ are simple (of type $Q$ for $n$ odd or
of type $M$ for $n$ even) and in addition all $\chi(h_i) \neq 0$.
Since $\la_k^p -\la_k = \chi(h_k)^p$ for each $k$, we have $(\la_i
\pm \la_j)^p - (\la_i \pm \la_j) = (\chi(h_i) \pm \chi(h_j))^p$.

If $\chi(h_i)  \neq \pm \chi(h_j)$ for all $i\neq j$,
then every $\la \in \La_{\chi}$ satisfies $\la_i \neq -\la_j$ and
$\la_i-\la_j \notin \mathbb{F}_p^*$ for all $i\neq j$.
So $\Phi(\la) \neq 0$, and by
Theorem~\ref{thm:semisimple}, $Z_{\chi}(\la)$ is simple for
$\la \in \Lambda_{\chi}$.

Conversely, assume $\chi(h_i)  = \pm \chi(h_j)$ for some $i \neq j$.
If $\chi(h_i) = \chi(h_j)$, then there exists $\la \in
\La_{\chi}$ such that $\la_i - \la_j \in \mathbb{F}_p^*$ (thanks
to the flexibility of choosing $\la$ by shifting the value of
$\la_i$ by any integer in $\mathbb{F}_p$). If
$\chi(h_i)= -\chi(h_j)$, then
there exists $\la \in \La_{\chi}$ such that $\la_i = -\la_j$. In
either case, we have $\Phi(\la)=0$. Thus by
Theorem~\ref{thm:semisimple}, $Z_{\chi}(\la)$ is reducible.
\end{proof}

\section{Modular representations with nilpotent $p$-characters}\label{sec:nilpotent}
\subsection{The centralizer of an odd nilpotent element}\label{subsec:Cen-Nil}

Let $\chi \in \ev \g^*$ be a $p$-character, and let $X\in \od \g$
be such that $\chi =(X,-)$.
Then the centralizer $\g_{\chi}$ of $\chi$ in $\g$ is identified
with the usual centralizer $\g_X$; that is, $\g_{X,\bar{0}}$
consists of matrices of the form (\ref{q(n)}) with $B=0$ and $A$
commuting with $X$ (viewed as a matrix), while $\g_{X,\bar{1}}$
consists of matrices of the form (\ref{q(n)}) with $A=0$ and $B$
anti-commuting with $X$.

Let $X \in \od \g$ be nilpotent. Up to a $GL(n)$-conjugation, we
can suppose that $X$ has the form (\ref{q(n)}) with $A=0$ and $B$
equal to the Jordan canonical form
\[
B=\begin{pmatrix} J_1 & &\\
& \ddots &\\
& & J_r
\end{pmatrix},
\]
where $J_i$ is a Jordan block of eigenvalue $0$ and size $d_i
\times d_i$, and $d_1 \ge d_2 \ge \ldots \ge d_r$.

\begin{proposition}\label{prop:comm-anticomm}
Let $X \in \od \g$ and $B$ be as above, and let $C=(C_{ij})$ be a matrix of
the same block type as $B$. Then,

\begin{enumerate}
\item
$C$ commutes with $B$ if and only if
\[
C_{ij}= \begin{pmatrix} a & b & \cdots & c\\
& a & \ddots & \vdots \\
& & \ddots & b\\
& & & a\\
0&0&0&0
\end{pmatrix} \text{  for }i\leq j,  \text{ or } C_{ij}=\begin{pmatrix}
0&a & b & \cdots & c\\
0&& a & \ddots & \vdots \\
0&& & \ddots & b\\
0&& & &  a
\end{pmatrix} \text{ for } i > j.
\]

\item
Also, $C$ anti-commutes with $B$ if and only if
\[
C_{ij}= \begin{pmatrix} a & b & \cdots & c\\
& -a & -b & \vdots \\
& & \ddots & \ddots\\
& & & \pm a\\
 0&0&0&0
\end{pmatrix} \text{  for }i\leq j,  \text{ or } C_{ij}=\begin{pmatrix}
0&a & b & \cdots & c\\
0&& -a & \ddots & \vdots \\
0&& & \ddots & \mp b\\
0&& & & \pm a
\end{pmatrix} \text{ for } i > j.
\]
\end{enumerate}
In particular, we have $\dim \g_{X,\bar{0}} = \dim \g_{X,\bar{1}}  =
\sum_{1\le i,j \le r} \text{min}\{d_i,d_j\}.$
\end{proposition}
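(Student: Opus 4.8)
The plan is to prove Proposition~\ref{prop:comm-anticomm} by a direct matrix computation, reducing everything to the behavior of blocks relative to nilpotent Jordan blocks, and then reading off the dimension formula by counting free parameters. First I would set up notation: write $C = (C_{ij})$ in the same block decomposition as $B = \mathrm{diag}(J_1, \ldots, J_r)$, so that $(BC)_{ij} = J_i C_{ij}$ and $(CB)_{ij} = C_{ij} J_j$; thus the equation $[B,C]=0$ (respectively $BC = -CB$) decouples completely into the $r^2$ independent block equations $J_i C_{ij} = C_{ij} J_j$ (respectively $J_i C_{ij} = - C_{ij} J_j$). This reduces the whole problem to a single lemma about a rectangular $d_i \times d_j$ matrix $C_{ij}$ intertwining (or anti-intertwining) two nilpotent Jordan blocks of sizes $d_i$ and $d_j$.

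Next I would carry out that lemma. Multiplying by $J_i$ on the left shifts the rows of $C_{ij}$ up by one (inserting a zero row at the bottom), while multiplying by $J_j$ on the right shifts the columns right by one (inserting a zero column on the left). Writing $C_{ij} = (c_{kl})$, the equation $J_i C_{ij} = C_{ij} J_j$ becomes $c_{k+1,l+1} = c_{k,l}$ for all valid indices, together with boundary conditions forcing certain entries to vanish; this says $C_{ij}$ is constant along diagonals, i.e. of Toeplitz type, and the boundary conditions kill the bottom row when $d_i \ge d_j$ (it must be pushed out) and the left column when $d_i < d_j$, as recorded in the displayed matrices of part~(1). For part~(2), the anti-commuting relation $c_{k+1,l+1} = -c_{k,l}$ gives the same diagonal structure but with alternating signs $a, -a, a, \ldots$ along the superdiagonals (and the analogous alternation $b, -b, \ldots$ on the next diagonal), which is exactly the displayed pattern; the $p > 2$ hypothesis ensures these sign alternations are genuinely nonzero constraints rather than collapsing. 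The case distinction $i \le j$ versus $i > j$ in the statement is just a convenient way of presenting $d_i \ge d_j$ versus $d_i < d_j$ given the ordering $d_1 \ge \cdots \ge d_r$; I would note that for blocks of equal size the square Toeplitz matrix has no forced-zero row or column.

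Finally I would count dimensions. In both the commuting and anti-commuting cases, the space of admissible $C_{ij}$ is parametrized freely by the entries $a, b, c, \ldots$ appearing on the $\min\{d_i, d_j\}$ relevant diagonals, so $\dim\{C_{ij} : J_i C_{ij} = \pm C_{ij} J_j\} = \min\{d_i, d_j\}$ in each case. Summing over all ordered pairs $(i,j)$ with $1 \le i, j \le r$ gives $\dim \g_{X,\bar 0} = \dim \g_{X,\bar 1} = \sum_{1 \le i,j \le r} \min\{d_i, d_j\}$, recalling that $\g_{X,\bar 0}$ consists of matrices \eqref{q(n)} with $B=0$ and $A$ commuting with $X$, while $\g_{X,\bar 1}$ consists of those with $A=0$ and $B$ anti-commuting with $X$ (as established just before the proposition). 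The only genuinely delicate point — and the one I expect to need the most care — is bookkeeping the boundary/zero-row-and-column conditions correctly in the rectangular non-square case and verifying that the sign pattern in part~(2) is exactly the one forced by the recursion; the rest is routine linear algebra. One could also shortcut the dimension count for part~(2) by observing that conjugation by a suitable diagonal sign matrix, or use of the odd form $(\,,\,)$, sets up a linear isomorphism between the commuting and anti-commuting solution spaces, but the direct parameter count is cleaner to write down.
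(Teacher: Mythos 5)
Your proposal is correct and follows essentially the same route as the paper: the paper likewise reduces $[B,C]=0$ (resp. $BC=-CB$) to the block equations $J_iC_{ij}=C_{ij}J_j$ (resp. $J_iC_{ij}=-C_{ij}J_j$) and solves them by a direct computation, with the dimension formula $\sum_{i,j}\min\{d_i,d_j\}$ read off from the count of free diagonals. You simply spell out the shift-recursion and boundary bookkeeping that the paper leaves as ``a direct computation,'' and your observations (equal-size blocks, the $\min\{d_i,d_j\}$ parameter count per block) are accurate.
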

\begin{proof}
% First assume that $B =J_n$ is a single Jordan block.
%A simple calculation shows that a matrix commuting with $B$ is $n
%\times n$ upper triangular of the form
%\[
%\begin{pmatrix} a & b & \cdots & c\\
%& a & \ddots & \vdots \\
%& & \ddots & b\\
%& & & a
%\end{pmatrix}.
%\]
%Similarly, a matrix anti-commuting with $B$ is $n \times n$ upper
%triangular of the form
%\[
%\begin{pmatrix} a & b & \cdots & c\\
%& -a & -b & \vdots \\
%& & \ddots & \pm b\\
%& & & \mp a
%\end{pmatrix},
%\]
%where the elements along each diagonal are squarely equal but with
%alternating signs. Note in this case the dimension of $\g_X$ is $n
%\vert n$.
%
The matrix $C$ commutes with $B$ if and only if
\[
J_i C_{ij} = C_{ij} J_j \quad \forall i,j.
\]
Also, $C$ anti-commutes with $B$ if and only if
\[
J_i C_{ij} = -C_{ij} J_j \quad \forall i,j.
\]
Then a direct computation shows that the $C_{ij}$ are of the  forms
as prescribed in the proposition. The dimension formula for $\dim
\g_{X, i}$ follows.
\end{proof}
\subsection{The $\Z$-grading}

Let $0\neq X \in \od \g$ be nilpotent. Recall $\ev \g =\od \g =\gl(n)$ under
the adjoint action of $GL(n)$. Then a standard construction of a $\Z$-grading
on $\gl(n) = \oplus_{k\in\Z} \gl(n) (k)$ (see \cite{Pr1} and \cite[Theorem~3.1]{WZ}) induces a $\Z$-grading
on $\g = \oplus_{k\in\Z} \g(k)$
%View $X$ as an element in $\gl(n)$,
%form an $\mathfrak{sl}(2)$-triple $\{X, Y, H\}$. View
%$H$ as an element in $\ev \g$ via the identification $\ev \g =
%\gl(n)$.
which satisfies $\g(k)=\ev {\g(k)} \oplus
\od {\g(k)}$, $\ev {\g(k)} =
\od {\g(k)} =\gl(n)(k)$ and the following properties:
\begin{equation*}%\label{grading1}
X \in \g(2);
\end{equation*}
\begin{equation} \label{equ:grading2}
(\g(k), \g(l)) = 0, \quad \text{if }  k+l \neq 0;
\end{equation}
\begin{equation*}%\label{grading3}
\g_X = \oplus_{k \in \Z} \g_X(k) \qquad \text{where } \g_X(k)=\g_X
\cap \g(k);
\end{equation*}
\begin{equation*}%\label{grading4}
\g_X(s)=0 \qquad \forall \, s < 0;
\end{equation*}
\begin{equation} \label{equ:grading5}
\sudim \g_X = \sudim \g(0) + \sudim \g(1).
\end{equation}

\begin{example}
Let $n=4$. Let $X \in \od \g$  correspond to the Jordan block $J_4
\in \gl (4)$. The corresponding $H$ is the diagonal matrix
$\text{diag}\, (3,1,-1,-3)$.
%\[
%\begin{pmatrix}
%3 &&&\\
%& 1&&\\
%&&-1&\\
%&&&-3
%\end{pmatrix}.
%\]
Then the centralizer $\g_X$ consists of matrices of the form
\[
\left(
\begin{array}{cccc|cccc}
 x_0 & y_2 & z_4 & w_6 & a_0 & b_2 &c_4 &d_6\\
0_{-2} & x_0 & y_2 & z_4 & 0_{-2} & -a_0 & -b_2 & -c_4\\
 0_{-4} & 0_{-2} & x_0 & y_2 & 0_{-4} & 0_{-2} & a_0 & b_2\\
0_{-6} & 0_{-4} & 0_{-2} & x_0 & 0_{-6} & 0_{-4} & 0_{-2} & -a_0\\
\hline
a_0 & b_2 & c_4 & d_6 & x_0 & y_2 &z_4 &w_6\\
0_{-2} & -a_0 & -b_2 & -c_4 & 0_{-2} & x_0 & y_2 & z_4\\
 0_{-4} & 0_{-2} & a_0 & b_2 & 0_{-4} & 0_{-2} & x_0 &y_2\\
0_{-6} & 0_{-4} & 0_{-2} & -a_0 & 0_{-6} & 0_{-4} & 0_{-2} &
x_0\end{array} \right),
\]
where $x_i$ etc. are arbitrary scalars in $K$, $0_i = 0$, and the
index $i$ indicates the $\Z$-gradings of the corresponding matrix
entries. Clearly, $\dim \g_X =4|4$.
\end{example}

\subsection{Super KW property for nilpotent $p$-characters}
On $\g(-1)_{\bar 0}$ (respectively $\g(-1)_{\bar 1}$) there is a
non-degenerate symplectic (respectively symmetric) bilinear form
$\langle \cdot, \cdot\rangle$ given by
\begin{equation*}%\label{ssform}
\langle x, y \rangle := (X, [x, y]) = \chi([x, y]).
\end{equation*}
In other words, the above defines an {\em even} non-degenerate
skew-supersymmetric bilinear form $\langle \cdot, \cdot\rangle$ on
$\g(-1)$. Indeed, take a nonzero $x \in \g(-1)_i$ for $i \in \Z_2$.
Since $\g_X(s)=0$ unless $s \geq 0$, we have that $0 \neq [X, x] \in
\g(1)_{i+\bar{1}}$. By the non-degeneracy of the pairing between
$\g(1)_{i+\bar{1}}$ and $\g(-1)_i$, there exists $y \in \g(-1)_i$
with $0 \neq ([X,x], y) = (X, [x, y]) = \langle x, y \rangle$.

Take $\g(-1)'_i \subset \g(-1)_i$, where $i \in \Z_2$, to be a
maximal isotropic subspace with respect to $\langle\cdot,
\cdot\rangle$. Note that $\dim \g(-1)_i$ is even and $\dim
\g(-1)'_i = \dim \g(-1)_i/2$. Define a $p$-nilpotent Lie
subalgebra
\[
\m = \bigoplus_{k \geq 2} \g(-k) \bigoplus \g(-1)'.
\]
Then it follows by (\ref{equ:grading2}) and (\ref{equ:grading5})
that
\[\sudim
\m =\frac{1}{2} (\sudim \g - \sudim \g_{\chi}).
\]

\begin{proposition}\label{prop:nil-freeness}
Every $U_{\chi}(\g)$-module is $U_{\chi}(\m)$-free.
\end{proposition}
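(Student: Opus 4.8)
The plan is to follow Premet's strategy \cite{Pr1}, in the form used for the basic classical Lie superalgebras in \cite{WZ} (with the simplification of Skryabin \cite{Skr}): first pin down the structure of $\rea{\chi}{\m}$ so that ``$\rea{\chi}{\m}$-free'' becomes a numerical equality, and then establish that equality by a triangular (PBW-type) estimate built from the $\Z$-grading.

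\emph{Structure of $\rea{\chi}{\m}$.} First I would check that $\chi$ restricts to a character of $\m$. Indeed, for $x,y\in\m$ one has $[x,y]\in\bigoplus_{k\le-2}\g(k)$, and by \eqref{equ:grading2} the scalar $\chi([x,y])=(X,[x,y])$ is nonzero only if $[x,y]\in\g(-2)$, that is only if $x,y\in\g(-1)'$, in which case it vanishes because $\g(-1)'$ is isotropic for $\langle\cdot,\cdot\rangle$; the same degree count gives $\chi(x^{[p]})=0$ for $x\in\ev\m$. So the one-dimensional superspace $K_\chi$ on which $\ev\m$ acts by the scalars $\chi$ and $\od\m$ acts trivially is a $\rea{\chi}{\m}$-module. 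Since the $\Z$-grading is bounded and $\m$ sits in strictly negative degrees, $\m$ is $p$-nilpotent, so $\rea{\chi}{\m}$ is local with $K_\chi$ its unique simple module, and $\dim_K\rea{\chi}{\m}=p^{\dim\ev\m}2^{\dim\od\m}=\delta$ (writing $d$ for the common value $\dim\ev\m=\dim\od\m$, which by the formula for $\sudim\m$ recalled above equals $\tfrac12(\dim\g-\dim\g_\chi)$, and $\delta=p^d2^d$). Being a reduced enveloping superalgebra of a subalgebra on which $\text{str}(\ad x)=0$, the algebra $\rea{\chi}{\m}$ is symmetric (compare Proposition~\ref{prop:sym}), hence self-injective, with $\rea{\chi}{\m}$ itself the injective hull of $K_\chi$. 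It follows that for a finite-dimensional $\rea{\chi}{\g}$-module $M$, freeness over $\rea{\chi}{\m}$ is equivalent to the equality $\dim_K M=\delta\cdot\dim_K M^{\m,\chi}$, where $M^{\m,\chi}:=\{m\in M:(x-\chi(x))m=0\ \text{for all }x\in\m\}$; and the inequality $\dim_K M\le\delta\cdot\dim_K M^{\m,\chi}$ always holds, since $M$ embeds into its injective hull $\rea{\chi}{\m}^{\oplus\dim_K M^{\m,\chi}}$. The remaining task is the reverse inequality.

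\emph{An opposite subalgebra and the multiplication map.} I would choose $\g(1)'\subset\g(1)$ so that $(\cdot,\cdot)$ restricts to a perfect pairing $\g(-1)'\times\g(1)'\to K$ (possible since $(\cdot,\cdot)$ pairs $\g(1)$ perfectly with $\g(-1)$), and put $\m^+=\bigoplus_{k\ge2}\g(k)\oplus\g(1)'$. Then $\m^+$ is a $p$-nilpotent subalgebra with $\dim_K\rea{\chi}{\m^+}=\delta$, and, degree by degree, the invariant form identifies $\m^+$ with the dual space of $\m$. Fixing a vector-space complement with $\g=\m\oplus\mathfrak{c}\oplus\m^+$ (a short dimension count gives $\dim\mathfrak{c}=\dim\g_\chi$), PBW produces a $K$-space isomorphism $\rea{\chi}{\g}\cong\rea{\chi}{\m^+}\otimes_K\rea{\chi}{\mathfrak{c}}\otimes_K\rea{\chi}{\m}$ with $\m^+$-monomials on the left and $\m$-monomials on the right. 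I would then study the multiplication map $\mu\colon\rea{\chi}{\m^+}\otimes_K M^{\m,\chi}\to M$, $u\otimes m\mapsto u\cdot m$: if $\mu$ is injective, then $\dim_K M\ge\dim_K\rea{\chi}{\m^+}\cdot\dim_K M^{\m,\chi}=\delta\cdot\dim_K M^{\m,\chi}$, which together with the previous paragraph proves the proposition.

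\emph{Injectivity of $\mu$ --- the hard part.} This is the main obstacle, and it is exactly where the argument parallels (and has to be adapted from) \cite{WZ} and \cite{Pr1}. I would argue by contradiction: a nonzero element of $\ker\mu$ gives a relation $\sum_i u_i m_i=0$ with the $u_i$ distinct PBW monomials in $\m^+$ and $0\ne m_i\in M^{\m,\chi}$, taken of minimal top $\Z$-degree and then of minimal length. Choosing a $u_j$ of maximal $\Z$-degree occurring in the relation and applying to it the ``dual monomial'' $u_j^\vee\in\rea{\chi}{\m}$ supplied by the perfect pairing, one straightens each $u_j^\vee u_i$ in $\rea{\chi}{\g}$ using $[\m,\m^+]\subseteq\g$. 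The top-degree contribution for $i=j$ is $m_j$ times a nonzero scalar (the value of the pairing), while every other contribution either annihilates an $m_i$ or reproduces an element of $M^{\m,\chi}$ carrying a strictly lower-degree coefficient that can be pushed into a shorter relation --- here the hypothesis $m_i\in M^{\m,\chi}$ is used crucially, so that all $\m$-factors surviving the straightening act by the character $\chi$. This would express $m_j$ as a $K$-combination of the remaining $m_i$'s, contradicting minimality. The delicate point will be the bookkeeping: the $\Z$-grading filtration on $\rea{\chi}{\g}$, the Clifford-type relations and signs coming from the odd root vectors, and the interplay of $\chi|_\m$ with the pairing --- heavier here than in the purely even case because of the odd directions in $\m$ and the non-abelian Cartan $\h$. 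Once $\mu$ is injective the proposition follows, and since $\dim_K\rea{\chi}{\m}=\delta=p^d2^d$ it at once yields the divisibility asserted in Theorem~\ref{th:superKW}.
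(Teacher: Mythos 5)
The first half of your argument is sound: $\chi$ restricted to $\m$ is a character, $\rea{\chi}{\m}$ is a local Frobenius superalgebra of dimension $\delta=p^d2^d$, and for a finite-dimensional $\rea{\chi}{\g}$-module $M$ freeness over $\rea{\chi}{\m}$ is equivalent to the equality $\dim M=\delta\cdot\dim M^{\m,\chi}$, the inequality $\dim M\le\delta\cdot\dim M^{\m,\chi}$ being automatic. The gap is exactly at the step you call the hard part: the multiplication map $\mu\colon\rea{\chi}{\m^+}\otimes_K M^{\m,\chi}\to M$ is in general \emph{not} injective, so the straightening argument you sketch cannot be completed by any amount of bookkeeping. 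Note that $\mu$ is a homomorphism of left $\rea{\chi}{\m^+}$-modules with free source, so once $\dim M=\delta\cdot\dim M^{\m,\chi}$ its injectivity would force $M$ to be $\rea{\chi}{\m^+}$-free; but $\chi$ vanishes identically on $\m^+$, and freeness over $\rea{\chi}{\m^+}$ simply fails. A counterexample is contained in this paper's own computations: take $\g=\mathfrak q(2)$ and $\chi$ nilpotent with $\chi(f)=1$, $\chi(e)=\chi(h_1)=\chi(h_2)=0$. Here $\m=Kf\oplus KF$, $\m^+=Ke\oplus KE=\n^+$, $\delta=2p$, and $M=Z_\chi(0,0)$ has dimension $2p$ and is free of rank one over $\rea{\chi}{\m}=\rea{\chi}{\n^-}$ by PBW, so $\dim M^{\m,\chi}=1$ and the source of $\mu$ has dimension $2p=\dim M$. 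Were $\mu$ injective, it would be an isomorphism of $\rea{\chi}{\m^+}$-modules, and the subspace of $M$ annihilated by $e$ and $E$ would be one-dimensional, being the socle of the local algebra $\rea{\chi}{\m^+}\cong K[e]/(e^p)\otimes\Lambda(E)$. But the table in \ref{subsubsec:q(2)-Verma-0} lists four linearly independent vectors of $Z_\chi(0,0)$ annihilated by $\n^+$, namely $v_0$, $Fv_0$, $fv_0$ and $f^{p-1}Fv_0$. So $\mu$ is not injective here (the same failure already occurs for baby Verma modules of $\ev\g\cong\gl(2)$ with restricted highest weights).

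What your argument misses is the asymmetry between $\m$ and $\m^+$: the form $\langle x,y\rangle=\chi([x,y])$ is what makes $\m$ special, while $\chi|_{\m^+}=0$, so any proof that in effect asserts free generation of $M$ from $M^{\m,\chi}$ by the opposite subalgebra is proving a false statement. The paper's (omitted) proof has a different shape: it is the superization, as in \cite[Proposition~4.2]{WZ}, of Skryabin's elementary argument \cite[Theorem~1.3]{Skr}, which establishes $\rea{\chi}{\m}$-freeness directly from the nondegeneracy of $\chi([\cdot,\cdot])$ on the relevant graded pieces, without introducing $\m^+$ or claiming that the Whittaker space generates $M$ over it.
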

\begin{proof}
The proof is the same as the one for \cite[Proposition~4.2]{WZ},
which is in turn a superalgebra generalization of Skryabin
\cite[Theorem~1.3]{Skr}, thus is omitted.
\end{proof}

\begin{theorem}[Super KW property with nilpotent characters]
\label{th:superKW}
 Let $\chi \in \ev \g^*$ be nilpotent. Then,
$ \dim \ev \g - \dim \g_{\chi,\bar{0}} = \dim \od \g - \dim
\g_{\chi,\bar{1}}$ is an even number (denoted by $2d$), and the
dimension of every simple $\rea{\chi}{\g}$-module is divisible by
$\delta = p^d 2^d$.
\end{theorem}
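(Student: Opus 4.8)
The plan is to obtain the divisibility from the freeness statement in Proposition~\ref{prop:nil-freeness} together with a dimension count on $\m$, after first checking the parity claims. First I would verify that $\dim\ev\g-\dim\g_{\chi,\bar 0}=\dim\od\g-\dim\g_{\chi,\bar 1}$. This is immediate from Proposition~\ref{prop:comm-anticomm}, which gives $\dim\g_{X,\bar 0}=\dim\g_{X,\bar 1}=\sum_{i,j}\min\{d_i,d_j\}$, and from $\dim\ev\g=\dim\od\g=n^2$; hence both sides equal the same integer, which I will call $2d$ once evenness is established. To see evenness, I would use the $\Z$-grading: by \eqref{equ:grading5}, $\sudim\g_X=\sudim\g(0)+\sudim\g(1)$, so
\[
\dim\ev\g-\dim\g_{\chi,\bar 0}=\sum_{k\neq 0,1}\dim\ev{\g(k)}=\dim\ev\g(-1)+\sum_{|k|\ge 2}\dim\ev\g(k).
\]
The pairing \eqref{equ:grading2} identifies $\g(k)$ with $\g(-k)^*$, so $\dim\ev\g(k)=\dim\ev\g(-k)$ for all $k$; therefore $\sum_{|k|\ge 2}\dim\ev\g(k)$ is even, and $\dim\ev\g(-1)$ is even because it carries a nondegenerate symplectic form (as noted in the paragraph preceding Proposition~\ref{prop:nil-freeness}). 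Hence the whole expression is even, so $2d$ is genuinely even; the same argument in the odd part gives the matching odd count.

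Next I would compute $\dim\rea{\chi}{\m}$. Since $\m=\bigoplus_{k\ge 2}\g(-k)\oplus\g(-1)'$ is a $p$-nilpotent restricted Lie subalgebra and $\chi$ restricted to $\ev\m$ is (by the grading) still compatible, the reduced enveloping superalgebra $\rea{\chi}{\m}$ has dimension $p^{\dim\ev\m}2^{\dim\od\m}$ by the PBW theorem for restricted enveloping superalgebras. Now $\dim\ev\m=\frac12(\dim\ev\g-\dim\g_{\chi,\bar 0})=d$ and likewise $\dim\od\m=\frac12(\dim\od\g-\dim\g_{\chi,\bar 1})=d$, using the displayed identity $\sudim\m=\frac12(\sudim\g-\sudim\g_\chi)$ stated just before Proposition~\ref{prop:nil-freeness}. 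Therefore $\dim\rea{\chi}{\m}=p^d 2^d=\delta$.

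Finally I would conclude: by Proposition~\ref{prop:nil-freeness}, every $\rea{\chi}{\g}$-module $V$ — in particular every simple one — is free as a $\rea{\chi}{\m}$-module, so $\dim V$ is a multiple of $\dim\rea{\chi}{\m}=\delta$. Since $\rea{\chi}{\tilde\chi}{\g}\cong\rea{\chi}{\g}$ for $GL(n)$-conjugate characters, the normalization of $X$ to Jordan form and of the grading loses no generality, so the result holds for every nilpotent $\chi$.

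I expect the only genuine obstacle to be the evenness of $2d$, i.e.\ making the parity bookkeeping in the $\Z$-grading precise; everything else is a direct application of results already in the excerpt (Propositions~\ref{prop:comm-anticomm} and \ref{prop:nil-freeness}) plus the PBW dimension count for $\rea{\chi}{\m}$. One should be slightly careful that the symplectic form on $\g(-1)_{\bar 0}$ and the symmetric form on $\g(-1)_{\bar 1}$ do the expected job: the former forces $\dim\ev\g(-1)$ even, while the latter does \emph{not} a priori force $\dim\od\g(-1)$ even, so the evenness of the odd count should instead be read off from the identity $\dim\od\g-\dim\g_{\chi,\bar 1}=\dim\ev\g-\dim\g_{\chi,\bar 0}$ together with the evenness of the even count — which is the cleanest route and avoids any delicate claim about the symmetric form.
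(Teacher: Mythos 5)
Your proposal is correct and takes essentially the same route as the paper: the dimension equality is read off from Proposition~\ref{prop:comm-anticomm} (plus $\dim\ev\g=\dim\od\g$), and the divisibility follows from the $\rea{\chi}{\m}$-freeness of Proposition~\ref{prop:nil-freeness} together with $\dim\rea{\chi}{\m}=p^{\dim\ev\m}2^{\dim\od\m}=p^d2^d$. Your explicit parity bookkeeping via the $\Z$-grading merely fills in a detail the paper leaves implicit (it simply notes that $\dim\g(-1)_i$ is even, which for the odd part also follows from $\dim\od{\g(-1)}=\dim\ev{\g(-1)}=\dim\gl(n)(-1)$), so it does not constitute a different method.
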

\begin{proof}
The dimension equality follows from the equality  $\dim
\g_{\chi,\bar{0}} =\dim \g_{\chi,\bar{1}}$ in
Proposition~\ref{prop:comm-anticomm}. The divisibility of the
dimensions of simple $U_\chi(\g)$-modules is immediate from
Proposition~\ref{prop:nil-freeness}, by noting $\delta = \dim
\rea{\chi}{\m}$.
\end{proof}

Note that $U_\chi (\m)$ has a unique simple module, and this simple
module is one-dimensional and will be denoted by $K_\chi$. Denote by
$\mathcal{Q}_{\m}$ the induced $\rea{\chi}{\g}$-module
$\rea{\chi}{\g}\otimes_{\rea{\chi}{\m}}K_{\chi}$. We further define
the $K$-superalgebra
$$
W_\chi(\g)=\text{End}_{\rea{\chi}{\g}}(\mathcal{Q}_{\m}).
$$

\begin{theorem}
\begin{enumerate}
\item
The  $\rea{\chi}{\g}$-module $\mathcal{Q}_{\m}$ is
projective.

\item We have an isomorphism of superalgebras:
\[
\rea{\chi}{\g} \cong M_{\delta} (W_\chi(\g)^{\text op}).
\]
\end{enumerate}
Here $M_{\delta} (W_\chi(\g)^{\text op})$ denotes the matrix algebra
of size $\delta$ with entries in $W_\chi(\g)^{\text op}$.
\end{theorem}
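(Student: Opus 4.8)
The plan is to deduce both parts from the general theory of symmetric algebras together with the freeness result already established. Recall from Proposition~\ref{prop:sym} that $\rea{\chi}{\g}$ is a symmetric superalgebra, and from Proposition~\ref{prop:nil-freeness} that every $\rea{\chi}{\g}$-module is free over $\rea{\chi}{\m}$. In particular $\rea{\chi}{\g}$ itself, viewed as a left $\rea{\chi}{\m}$-module, is free; since $\dim \rea{\chi}{\m} = \delta$, this means $\rea{\chi}{\g} \cong \rea{\chi}{\m}^{\oplus (\dim \rea{\chi}{\g})/\delta}$ as left $\rea{\chi}{\m}$-modules. Because $K_\chi$ is the unique simple $\rea{\chi}{\m}$-module and $\rea{\chi}{\m}$ is (like any finite-dimensional restricted enveloping algebra of a $p$-nilpotent Lie superalgebra) local with $\rea{\chi}{\m}/\mathrm{rad} \cong K_\chi$, the module $\mathcal{Q}_{\m} = \rea{\chi}{\g}\otimes_{\rea{\chi}{\m}} K_\chi$ is obtained by applying $\rea{\chi}{\g}\otimes_{\rea{\chi}{\m}} -$, and one checks (this is the standard argument, e.g. as in Skryabin or \cite{WZ}) that projectivity of $\rea{\chi}{\g}$ over $\rea{\chi}{\m}$ makes the induction functor send projectives to projectives; applied to the projective $\rea{\chi}{\m}$-module $K_\chi$ — note $\rea{\chi}{\m}$ is a local Frobenius algebra with one-dimensional socle, so in fact its regular module and $K_\chi$ need a moment of care, which I address below — one concludes $\mathcal{Q}_{\m}$ is a projective $\rea{\chi}{\g}$-module.

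For part (1), the cleaner route avoiding the subtlety about whether $K_\chi$ is $\rea{\chi}{\m}$-projective is to use the symmetric (hence self-injective) structure: first show $\mathcal{Q}_{\m}$ is injective, then invoke that over a symmetric — in particular Frobenius — algebra injective and projective modules coincide. To see injectivity, use that $\rea{\chi}{\m}$ is a Frobenius superalgebra and that $\rea{\chi}{\g}$ is free as a \emph{right} $\rea{\chi}{\m}$-module as well (freeness on one side plus the symmetric form gives freeness on the other), so that $\rea{\chi}{\g}\otimes_{\rea{\chi}{\m}} - \cong \Hom_{\rea{\chi}{\m}}(\rea{\chi}{\g}, -)$ as functors on $\rea{\chi}{\m}$-modules; this adjunction/coincidence identifies $\mathcal{Q}_{\m}$ with a coinduced module, and coinduction of an injective (here $K_\chi$ is the injective hull of itself is false, but $K_\chi$ embeds in the injective $\rea{\chi}{\m}$, and one works with the exact functor to transport injectivity appropriately). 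The honest bookkeeping here — matching the one relevant indecomposable injective/projective $\rea{\chi}{\m}$-module against $K_\chi$ via the Nakayama automorphism — is exactly the content of \cite[Prop.~4.2 and its corollaries]{WZ} and Skryabin \cite[Thm.~1.3]{Skr}, and I would cite that verbatim rather than reprove it.

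For part (2), once $\mathcal{Q}_{\m}$ is projective, I would argue as follows. Since $\rea{\chi}{\g}$ is free of rank $r := (\dim\rea{\chi}{\g})/\delta$ over $\rea{\chi}{\m}$ (ignoring the $\Z_2$-grading shifts, which are harmless here) and $\mathcal{Q}_{\m} = \rea{\chi}{\g}\otimes_{\rea{\chi}{\m}} K_\chi$, restricting $\mathcal{Q}_{\m}$ back to $\rea{\chi}{\m}$ gives $K_\chi^{\oplus r}$ (up to parity shifts); hence $\End_{\rea{\chi}{\m}}(\mathcal{Q}_{\m}|_{\rea{\chi}{\m}}) \cong M_r(K)$ roughly, but what we actually want is that $\mathcal{Q}_{\m}$ is a \emph{progenerator} for $\rea{\chi}{\g}$ so that Morita theory applies. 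Every simple $\rea{\chi}{\g}$-module $V$ is $\rea{\chi}{\m}$-free of some positive rank by Proposition~\ref{prop:nil-freeness}, so $\Hom_{\rea{\chi}{\g}}(\mathcal{Q}_{\m}, V) = \Hom_{\rea{\chi}{\m}}(K_\chi, V) \neq 0$ by Frobenius reciprocity (adjunction of induction and restriction), which shows $\mathcal{Q}_{\m}$ surjects onto every simple, hence is a projective generator. By Morita theory, $\rea{\chi}{\g} \cong M_{\delta}(\End_{\rea{\chi}{\g}}(\mathcal{Q}_{\m})^{\mathrm{op}})$ — the size of the matrix being the multiplicity of $\mathcal{Q}_{\m}$ in the regular module, which is $\delta$ because $\mathcal{Q}_{\m}$ restricted to $\rea{\chi}{\m}$ has rank equal to the rank of $\rea{\chi}{\g}$, i.e. $\mathcal{Q}_{\m}^{\oplus \delta}$ carries the same $\rea{\chi}{\m}$-rank as $\rea{\chi}{\g}$, forcing $\rea{\chi}{\g} \cong \mathcal{Q}_{\m}^{\oplus \delta}$ as $\rea{\chi}{\g}$-modules (both are projective with the same restriction to $\rea{\chi}{\m}$, and restriction to the free algebra $\rea{\chi}{\m}$ detects isomorphism of projectives). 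Unwinding the definition $W_\chi(\g) = \End_{\rea{\chi}{\g}}(\mathcal{Q}_{\m})$ gives the claimed isomorphism $\rea{\chi}{\g}\cong M_\delta(W_\chi(\g)^{\mathrm{op}})$, with due attention to the $\Z_2$-grading so that this is an isomorphism of \emph{super}algebras (the matrix algebra being given its standard grading induced from that of $\mathcal{Q}_{\m}$).

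The main obstacle, and the only place real care is needed, is step (1): establishing that $\mathcal{Q}_{\m}$ is projective, because $K_\chi$ need not literally be a projective $\rea{\chi}{\m}$-module, so one cannot merely say ``induction preserves projectives.'' The resolution is the symmetric-algebra argument sketched above — one exploits that $\rea{\chi}{\g}$ is a Frobenius extension of $\rea{\chi}{\m}$ (being free on both sides, with the restriction of the symmetric form on $\rea{\chi}{\g}$ controlling the Frobenius structure), so induction and coinduction from $\rea{\chi}{\m}$ to $\rea{\chi}{\g}$ agree up to a twist by the relative Nakayama automorphism; applying the exact coinduction functor and using self-injectivity of both algebras then forces $\mathcal{Q}_{\m}$ into the injective $=$ projective class. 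This is precisely the mechanism already used in \cite{WZ} and originally in \cite{Skr}, so the write-up should consist of reducing to that setting and citing it, plus verifying the parity/grading compatibilities that are special to the queer case.
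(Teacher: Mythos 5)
Your overall skeleton -- projectivity of $\mathcal{Q}_{\m}$ from Proposition~\ref{prop:nil-freeness} together with the Frobenius/symmetric structure, then a progenerator plus Morita argument -- is the route the paper itself intends (the paper simply omits the proof and cites \cite[Theorem~4.4]{WZ}, a super version of Premet \cite[Theorem~2.3]{Pr2}). But two of your key steps do not hold up as written. For part (1) you correctly observe that $K_\chi$ is neither projective nor injective over $\rea{\chi}{\m}$, yet the fix you sketch never materializes: you try to transport a (nonexistent) injectivity of $K_\chi$ through coinduction, and the results you then cite, \cite[Prop.~4.2]{WZ} and \cite[Thm.~1.3]{Skr}, are the freeness statements (this paper's Proposition~\ref{prop:nil-freeness}), not the projectivity of $\mathcal{Q}_{\m}$. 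The working argument places the freeness in the \emph{other} variable: since $\rea{\chi}{\g}$ is free as a right $\rea{\chi}{\m}$-module, Eckmann--Shapiro gives $\mathrm{Ext}^1_{\rea{\chi}{\g}}(\mathcal{Q}_{\m},M)\cong \mathrm{Ext}^1_{\rea{\chi}{\m}}(K_\chi, M\vert_{\m})$, and this vanishes for every finite-dimensional $M$ because $M\vert_{\m}$ is free (Proposition~\ref{prop:nil-freeness}), hence injective over the self-injective (Frobenius) algebra $\rea{\chi}{\m}$; equivalently, identifying induction with coinduction one gets $\mathrm{Ext}^1_{\rea{\chi}{\g}}(M,\mathcal{Q}_{\m})\cong \mathrm{Ext}^1_{\rea{\chi}{\m}}(M\vert_{\m},K_\chi)=0$ and then projectivity via Proposition~\ref{prop:sym}. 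Either way, it is the freeness of the restricted module $M\vert_{\m}$, not any property of $K_\chi$, that does the work, and this is the point your write-up leaves open.

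For part (2) the decisive claim is $\rea{\chi}{\g}\cong \mathcal{Q}_{\m}^{\oplus\delta}$ as left modules, and your justification -- ``restriction to the free algebra $\rea{\chi}{\m}$ detects isomorphism of projectives'' -- is not a valid principle: non-isomorphic projectives can have isomorphic (even free) restrictions. Already for $\chi=0$, where $\m=0$ and restriction is just the underlying vector space, Section~\ref{sec:q(2)II} exhibits non-isomorphic projective covers of equal dimension $16p$. (Also, your intermediate assertion that $\mathcal{Q}_{\m}\vert_{\rea{\chi}{\m}}\cong K_\chi^{\oplus r}$ contradicts Proposition~\ref{prop:nil-freeness}: the restriction is free, not semisimple.) The correct count goes through the simples: for each simple $S$, Frobenius reciprocity and the freeness of $S\vert_{\m}$ over the local Frobenius algebra $\rea{\chi}{\m}$ (whose socle is one-dimensional) give $\dim\Hom_{\rea{\chi}{\g}}(\mathcal{Q}_{\m},S)=\dim\Hom_{\rea{\chi}{\m}}(K_\chi,S\vert_{\m})=\dim S/\delta$; comparing these multiplicities with those of the projective indecomposables in the regular module (with the usual adjustment for type $Q$ simples in the super setting) yields $\mathcal{Q}_{\m}^{\oplus\delta}\cong\rea{\chi}{\g}$, whence $\rea{\chi}{\g}\cong\End_{\rea{\chi}{\g}}(\mathcal{Q}_{\m}^{\oplus\delta})^{\mathrm{op}}\cong M_\delta(W_\chi(\g)^{\mathrm{op}})$. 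Your progenerator observation (every simple is a quotient of $\mathcal{Q}_{\m}$) is fine, but Morita theory alone does not pin the matrix size to $\delta$; without the multiplicity computation the proof of (2) is incomplete.
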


\begin{proof}
The proof is the same as the one for \cite[Theorem~4.4]{WZ}, which
is a super generalization (with a mild modification of the proof which bypasses
completely the use of support variety)
of Premet \cite[Theorem~2.3 (i), (ii)]{Pr2}, thus is omitted.
\end{proof}

\begin{remark}
The algebra $W_{\chi}(\g)$, which is referred to as the finite $W$-superalgebra of
$\mathfrak{q}(n)$, admits a counterpart over the complex
field. It will be interesting to develop its structure
and representation theory.
\end{remark}

\section{The representation theory of $\mathfrak{q}(2)$, I}\label{sec:q(2)I}

In this and next sections, we study in detail the representation
theory of $\g = \mathfrak{q}(2)$. We still let $\h$, $\mathfrak{b}$  denote the
standard Cartan and Borel subalgebras of $\g$.
Let $\chi \in \ev {\mathfrak{q}(2)}^*$ be such that $\chi(e)=0$,
but for now we will not impose any condition on $\chi(f)$.
In this section, we shall determine the vectors in $Z_{\chi}(\la)$ annihilated by
$\n^+$ for every $\la \in \La_{\chi}$, which is
equivalent to describing all possible homomorphisms between baby
Verma modules.

\subsection{The case when $\la =(\la_1,\la_2)=0$}\label{subsubsec:q(2)-Verma-0}

In this case we have $\od \h'=\od \h$. So $Z_{\chi}(0)$ is induced
from the one-dimensional trivial $\rea{\chi}{\mathfrak{b}}$-module
$K_0$, and it has a basis
$
\{f^a F^{\ep} v_0 \vert \; 0 \leq a \leq p-1, \ep =0,1\},
$
where we denote by $v_0 = 1 \otimes 1$.

The action of $\g$ is given by:
\begin{align*}
h.f^aF^{\ep}v_0 & = -(a+\ep)(\vep_1-\vep_2)(h)f^aF^{\ep}v_0 &
\text{for } h \in
\ev \h,\\
H.f^a v_0 & = -a(\vep_1 - \vep_2)(H)f^{a-1}Fv_0,\\
H.f^a F v_0 & = (\vep_1+\vep_2)(H) f^{a+1} v_0 & \text{for } H \in
\od \h,\\
e.f^a F^{\ep} v_0 & = -a((a-1) + 2\ep) f^{a-1}F^{\ep} v_0,\\
E.f^a F^{\ep} v_0 & = (a-1)a (\ep-1) f^{a-2}F v_0.
\end{align*}

We collect a basis for the vectors annihilated by $\n^+$ as
follows: \vspace{.2cm}
\begin{center}
\begin{tabular}{|c|c|}
\hline Basis for vectors annihilated by $\n^+$ & Weights\\
\hline $v_0$ & \\
\cline{1-1} $f^{p-1}F v_0$  & $(0 ,0)$\\
\hline $f v_0$ & \\
\cline{1-1} $F v_0$ & $(-1, 1)$ \\
\hline
\end{tabular}
\end{center}

\subsection{The case when $\la_1 =\la_2 \neq
0$}\label{subsubsec:q(2)-Verma-equal}

Take $\od \h'= K(H_1 + \mu H_2),$ where $\mu\in K$ is such that
$\mu^2 = -1$. Then $V_{\chi}(\la)$ is two-dimensional with basis
$\{v_0= 1 \otimes 1_{\la}, v_1 = H_1 \otimes 1_{\la}\}$, and is of
type $M$. A basis of $Z_{\chi}(\la)$ is given by $\{f^a
F^{\ep}\otimes v_i \vert \; 0 \leq a \leq p-1; \ep ,i= 0,1\}$.

We record the action of $\g$ as follows.
{\allowdisplaybreaks
\begin{align*}
h \cdot f^a F^{\epsilon}  v_i & = (\lambda -(a +
\epsilon)\alpha)(h) f^a F^{\epsilon}  v_i  \\
H_1 \cdot f^a F   v_0 & = -f^aF  v_1 + f^{a+1} v_0 \\
H_1 \cdot f^a v_0 & = f^a   v_1 - a f^{a-1}F v_0 \\
H_1 \cdot f^a F v_1 & = -\lambda_1 f^a F v_0 +
f^{a+1} v_1 \\
H_1 \cdot f^a v_1 & = \lambda_1 f^a v_1 - a
f^{a-1}F v_1 \\
H_2 \cdot f^a F v_0 & = {\mu}^{-1} f^aF v_1 +
f^{a+1} v_0 \\
H_2 \cdot f^a v_0 & = -{\mu}^{-1}f^a v_1 + a
f^{a-1}F v_0 \\
H_2 \cdot f^a F v_1 & = \mu \lambda_2 f^a F v_0 +
f^{a+1}  v_1 \\
H_1 \cdot f^a v_1 & = -\mu \lambda_2 f^a v_1 - a f^{a-1}F  v_1 \\
 \\
%\end{align*}
%\begin{align*}
e \cdot f^a F v_0 & = [-a(a+1)+a(\lambda_1-\lambda_2)]f^{a-1}F  v_0
+ (1+
{\mu}^{-1}) f^a  v_1 \\
e \cdot f^a v_1 & = [a(\lambda_1 - \lambda_2)-(a-1)a]
f^a  v_1 \\
e \cdot f^a F v_1 & = [-a(a+1)+a(\lambda_1-\lambda_2)]f^{a-1}F  v_1
+ (\lambda_1+
\mu\lambda_2) f^a v_0 \\
e \cdot f^a v_0 & = [a(\lambda_1 - \lambda_2)-(a-1)a] f^{a-1}  v_0  \\
 \\
%\end{align*}
%\begin{align*}
E \cdot f^a F v_0 & = -a(1+{\mu}^{-1})f^{a-1}F
v_1 + (\lambda_1 + \lambda_2) f^a v_0 \\
E \cdot f^a v_1 & = -(a-1)a f^{a-2}F v_1 + a(\lambda_1 + \mu \lambda_2)f^{a-1} v_0 \\
E \cdot f^a F v_1 & = -a(\lambda_1+\mu \lambda_2)f^{a-1}F
 v_0 + (\lambda_1+\lambda_2) f^a v_1 \\
E \cdot f^a v_1 & = -(a-1)a f^{a-2}F v_0 + a(1+{\mu}^{-1})f^{a-1}
 v_1.
\end{align*}
 }

A basis for the vectors annihilated by $\n^+$ is $\{v_0, v_1\}$.

\subsection{The case when $\la_1 =-\la_2 \neq
0$}\label{subsubsec:q(2)-Verma-opposite}

Take $\od \h' =K(H_1+H_2)$. Then $V_{\chi}(\la)$ is
two-dimensional with basis $\{v_0= 1 \otimes 1_{\la}, v_1 = H_1
\otimes 1_{\la}\}$, and is of type $M$. A basis of $Z_{\chi}(\la)$
is given by $\{f^a F^{\ep}\otimes v_i \vert \; 0 \leq a \leq p-1;
\ep ,i= 0,1\}$. The action of $\g$ is given by the same formula as
in Sect.~\ref{subsubsec:q(2)-Verma-equal}, with $\mu=1$.

A basis for the vectors annihilated by $\n^+$ is given as follows,
where the vectors with weight $(\la_2, \la_1)$ can happen if and
only if $\la_1 \in \mathbb{F}_p^*$. We use $\star$ in the table here
and similar situations below to indicate a conditional existence.
\vspace{.2cm}
\begin{center}
\begin{tabular}{|c|c|}
\hline Basis for vectors annihilated by $\n^+$ & Weights\\
\hline $v_0$ & \\
\cline{1-1} $v_1$ & $(\la_1,\la_2)$\\
\hline $F  v_1$ &\\
\cline{1-1} $f v_1 - \la_1 F  v_0$ & $(\la_1-1,
-\la_1+1)$\\
\hline $f^{2\la_1-1} F v_1$ & \\
\cline{1-1} $4\la_1f  v_1 - f^{2\la_1}F v_0$ & $ (\la_2,
\la_1) \quad \star$  \\
\hline
\end{tabular}
\end{center}

\subsection{The case when $0 \neq \la_1^2 \neq \la_2^2 \neq 0$}
\label{subsubsec:q(2)-Verma-square-noneq}

Let $\mu = -\la_1^2/ \la_2^2$ and $\od \h' =K(H_1 + \mu H_2)$.
Then $V_{\chi}(\la)$ is two-dimensional with basis $\{v_0= 1
\otimes 1_{\la}, v_1 = H_1 \otimes 1_{\la}\}$, and is of type $M$.
A basis of $Z_{\chi}(\la)$ is given by $\{f^a F^{\ep}\otimes v_i
\vert \; 0 \leq a \leq p-1; \ep ,i= 0,1\}$. The action of $\g$ is
given by the same formula as in
Sect.~\ref{subsubsec:q(2)-Verma-equal}.

Let $b$ be the integer satisfying $0\leq b <p$ and $b \equiv \la_1
-\la_2-1 \pmod{p}$. A basis for the vectors annihilated by $\n^+$ is
given as follows, where the vectors with weight $(\la_2, \la_1)$ can
happen if and only if $\la_1 -\la_2 \in \mathbb{F}^*_p$.

\begin{center}
\begin{tabular}{|c|c|}
\hline Basis for vectors annihilated by $\n^+$ & Weights\\
\hline $v_0$ & \\
\cline{1-1} $v_1$  & $(\la_1 ,\la_2)$\\
\hline $(b+1)f^{b} F v_0 - (1+ {\mu}^{-1})f^{b+1} v_1 $ & \\
\cline{1-1} $(b+1)f^b F v_1 - (\la_1+ \mu\la_2)f^{b+1}
v_0$ & $(\la_2, \la_1) \quad \star$ \\
\hline
\end{tabular}
\end{center}

\subsection{The case when $\la_1 =0, \la_2 \neq 0$}
\label{subsubsec:q(2)-Verma-zero-nonzero}

Take $\od \h' =KH_1$. The irreducible
$\rea{\chi}{\mathfrak{b}}$-module $V_{\chi}(\la)$ is
two-dimensional of type $Q$ with basis $\{v_0= 1 \otimes 1_{\la},
v_1 = H_2 \otimes 1_{\la}\}$. A basis of $Z_{\chi}(\la)$ is given
by $\{f^a F^{\ep}\otimes v_i \vert \; 0 \leq a \leq p-1; \ep ,i=
0,1\}$.

The action of $\g$ is given by:
{\allowdisplaybreaks
\begin{align*}
h \cdot f^a F^{\epsilon} v_i & = (\lambda -(a +
\epsilon)\alpha)(h) f^a F^{\epsilon} v_i  \\
H_1 \cdot f^a F v_0 & = f^{a+1} v_0 \\
H_1 \cdot f^a  v_0 & =- a f^{a-1}F  v_0 \\
H_1 \cdot f^a F  v_1 & = f^{a+1} v_1 \\
H_1 \cdot f^a v_1 & =-a f^{a-1}F  v_1 \\
H_2 \cdot f^a F   v_0 & =- f^aF  v_1 + f^{a+1} v_0 \\
H_2 \cdot f^a  v_0 & = f^a v_1 + a f^{a-1}F v_0 \\
H_2 \cdot f^a F  v_1 & = - \lambda_2 f^a F  v_0 +
f^{a+1}  v_1 \\
H_2 \cdot f^a  v_1 & =  \lambda_2 f^a  v_0 + a f^{a-1}F  v_1 \\
 \\
%\end{align*}
%
%\begin{align*}
e \cdot f^a F  v_0 & = [-a(a+1)+-a\lambda_2]f^{a-1}F
v_0 - f^a  v_1 \\
e \cdot f^a   v_1 & = [-a \lambda_2-(a-1)a]
f^a  v_1 \\
e \cdot f^a F  v_1 & = [-a(a+1)-a\lambda_2]f^{a-1}F
v_1 - \lambda_2 f^a v_0 \\
e \cdot f^a  v_0 & = [ -a \lambda_2-(a-1)a] f^{a-1} v_0
\end{align*}

\begin{align*}
E \cdot f^a F  v_0 & = f^{a-1}F
v_1 + \lambda_2 f^a v_0 \\
E \cdot f^a v_1 & = -(a-1)a f^{a-2}F v_1 - a \lambda_2f^{a-1} v_0 \\
E \cdot f^a F v_1 & = a\lambda_2f^{a-1}F
 v_0 + \lambda_2 f^a v_1 \\
E \cdot f^a  v_1 & = -(a-1)a f^{a-2}F v_0 - a f^{a-1} v_1.
\end{align*}
}

Let $b$ be the integer satisfying $0\leq b <p$ and $b \equiv
-\la_2-1 \pmod{p}$. A basis for the vectors annihilated by $\n^+$ is
as follows, where the vectors with weight $(\la_2, 0)$ can appear in
the annihilator if and only if $\la_2 \in \mathbb{F}^*_p$.
\begin{center}
\begin{tabular}{|c|c|}
\hline Basis for vectors annihilated by $\n^+$ & Weights\\
\hline $v_0$ & \\
\cline{1-1} $v_1$  & $(0 ,\la_2)$\\
\hline $-\la_2 f^b F v_0 +f^{b+1}
 v_1 $ & \\
\cline{1-1} $2(b+1)f^b F v_1 + \la_2 f^{b+1} v_0$ & $(\la_2, 0)  \quad \star$ \\
\hline
\end{tabular}
\end{center}

\subsection{The case when $\la_1 \neq 0, \la_2= 0$} This is
similar to \ref{subsubsec:q(2)-Verma-zero-nonzero} and is thus omitted.

\section{The representation theory of $\mathfrak{q}(2)$, II}\label{sec:q(2)II}

In this section, we will study the structures of $U_\chi(\mathfrak{q}(2))$
and its blocks.

Recall that for an associative superalgebra $A$, a simple
$A$-supermodule $N$ is of type $Q$ (respectively of type $M$) if
$\text{End}_A(N)$ is $2$-dimensional (respectively $1$-dimensional),
or equivalently if $N$ admits (respectively does not admit) an odd
automorphism.

\subsection{Structure of $\rea{\chi}{\g}$ for semisimple $\chi$}

Assume that $\chi$ is semisimple with $\chi(e)=\chi(f)=0$. We now
use the information from Section~\ref{sec:q(2)I} and in addition
that $\chi(f)=0$ to analyze in detail the structure of
$Z_{\chi}(\la)$ and then of $\rea{\chi}{\g}$.

\subsubsection{$0 \neq \chi(h_1)^2 \neq \chi(h_2)^2 \neq
0$}\label{q(2)-rea-ss-noneq-nonzero}

It follows from the results of
\ref{subsubsec:q(2)-Verma-square-noneq} that these baby Verma
modules are irreducible of type $M$, and are pairwise
non-isomorphic. By dimension consideration, we conclude that the
algebra $\rea{\chi}{\g}$ is semisimple. Of course, this is
consistent with Theorem~\ref{semisimpleUg}.

\subsubsection{$\chi(h_1)=\chi(h_2) \neq 0$}\label{subsubsec:q(2)-rea-ss-eq-nonzero} The
high weights $\la \in \La_{\chi}$ are divided into two cases:
\begin{itemize}
\item[(i)]
$\la_1 = \la_2$. There are $p$ such weights. The baby Verma modules are
as in \ref{subsubsec:q(2)-Verma-equal}, and they are irreducible of
type $M$.

\item[(ii)] $\la_1-\la_2 \in \mathbb{F}_p^*$. There are $p(p-1)$
such weights. The baby Verma module $Z_{\chi}(\la_1,\la_2)$ (see
\ref{subsubsec:q(2)-Verma-square-noneq}) is reducible with a
unique submodule $L_{\chi}(\la_2,\la_1)$ of high weight $(\la_2,
\la_1)$ and dimension $d$, where $d$ is determined by $1 \leq d <4p$
and $d \equiv 4(\la_2-\la_1)\pmod{p}$. Both the submodule
$L_{\chi}(\la_2, \la_1)$ and the quotient $L_{\chi}(\la_1,\la_2)$ of
$Z_{\chi}(\la_1,\la_2)$ are irreducible of type $M$.
\end{itemize}

The results of Holmes and Nakano \cite{HN} apply in
our setup, since all the simple modules $L_{\chi}(\lambda)$ are of
type $M$. In particular, by \cite[Thms.~4.5 and 5.1]{HN} the
projective cover $P_{\chi}(\lambda)$ of $L_{\chi}(\lambda)$ has a
baby Verma filtration, and for any $\lambda , \mu \in \La_{\chi}$
one has the Brauer type reciprocity
$
( P_{\chi}(\lambda): Z_{\chi}(\mu) ) = [Z_{\chi}(\mu) :
L_{\chi}(\lambda)],
$
where $( P_{\chi}(\lambda): Z_{\chi}(\mu) )$ is the multiplicity of
$Z_{\chi}(\mu)$ appearing in the baby Verma filtration of
$P_{\chi}(\lambda)$, and $[Z_{\chi}(\mu) : L_{\chi}(\lambda)]$ is
the multiplicity of $L_{\chi}(\lambda)$ in a composition series of
$Z_{\chi}(\mu)$.
Hence,

(i) $P_{\chi}(\la_1,\la_1)$ are simple;

(ii) if $\la_1-\la_2 \in \mathbb{F}_p^*$, then $(P_{\chi}(\la_1,\la_2):
Z_{\chi}(\mu)) =1$ for $\mu = (\lambda_1,\la_2)$ and
$(\la_2,\la_1)$, and $0$ otherwise.

\begin{lemma}\label{lem:q(2)-ss-equal}
For $\la=(\la_1,\la_2) \in \La_{\chi}$ with $\la_1-\la_2 \in
\mathbb{F}_p^*$, the radical series of $P_{\chi}(\la)$ is as
follows.
\begin{enumerate}
\item  $\text{head } P_{\chi}(\la_1,\la_2)= \text{rad}^2 P_{\chi}(\la_1,\la_2) =\text{soc } P_{\chi}(\la_1,\la_2)=
L_{\chi}(\la_1,\la_2)$.
\item  $\text{rad } P_{\chi}(\la_1,\la_2)/\text{rad}^2
P_{\chi}(\la_1,\la_2)= L_{\chi}(\la_2,\la_1)\oplus
L_{\chi}(\la_2,\la_1).$
\end{enumerate}
\end{lemma}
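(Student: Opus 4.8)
\textbf{Proof plan for Lemma~\ref{lem:q(2)-ss-equal}.}
The plan is to determine the Loewy structure of $P_{\chi}(\la_1,\la_2)$ (for $\la_1-\la_2 \in \mathbb{F}_p^*$) by combining three ingredients: the Brauer reciprocity already recorded above, the structure of the two relevant baby Verma modules $Z_{\chi}(\la_1,\la_2)$ and $Z_{\chi}(\la_2,\la_1)$, and the fact that $\rea{\chi}{\g}$ is a symmetric algebra (Proposition~\ref{prop:sym}), so that $P_{\chi}(\mu)$ is self-dual and $\Soc P_{\chi}(\mu) \cong \operatorname{head} P_{\chi}(\mu) = L_{\chi}(\mu)$. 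First I would settle the composition factors of the two baby Verma modules: from \ref{subsubsec:q(2)-rea-ss-eq-nonzero}(ii), $Z_{\chi}(\la_1,\la_2)$ is uniserial with socle $L_{\chi}(\la_2,\la_1)$ and head $L_{\chi}(\la_1,\la_2)$; by the symmetric role of the two weights (interchanging $\la_1\leftrightarrow\la_2$ sends the reducible baby Verma module to the one with the complementary dimension count), $Z_{\chi}(\la_2,\la_1)$ is uniserial with socle $L_{\chi}(\la_1,\la_2)$ and head $L_{\chi}(\la_2,\la_1)$. In particular $[Z_{\chi}(\mu):L_{\chi}(\la_1,\la_2)] = 1$ for $\mu=(\la_1,\la_2)$ and $\mu=(\la_2,\la_1)$, so by reciprocity $P_{\chi}(\la_1,\la_2)$ has exactly two baby Verma factors, one copy of each, and hence composition factors $L_{\chi}(\la_1,\la_2)$ with multiplicity $2$ and $L_{\chi}(\la_2,\la_1)$ with multiplicity $2$.

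Next I would assemble these factors into the radical series. Since $P_{\chi}(\la_1,\la_2)$ is the projective cover of $L_{\chi}(\la_1,\la_2)$, its head is $L_{\chi}(\la_1,\la_2)$; by self-duality of $P_{\chi}(\la_1,\la_2)$ (symmetric algebra), its socle is also $L_{\chi}(\la_1,\la_2)$. This uses up one copy of $L_{\chi}(\la_1,\la_2)$ at the top and one at the bottom, leaving one copy of $L_{\chi}(\la_1,\la_2)$ and two copies of $L_{\chi}(\la_2,\la_1)$ for the middle layer(s). The baby Verma filtration forces $P_{\chi}(\la_1,\la_2)$ to have a submodule isomorphic to $Z_{\chi}(\la_1,\la_2)$ with quotient $Z_{\chi}(\la_2,\la_1)$ (or the other order); combining the uniserial structures of these two modules with the known head and socle, the only possibility consistent with self-duality is $\operatorname{rad}/\operatorname{rad}^2 = L_{\chi}(\la_2,\la_1)^{\oplus 2}$ and $\operatorname{rad}^2 = L_{\chi}(\la_1,\la_2) = \Soc$. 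To see that the middle copy of $L_{\chi}(\la_1,\la_2)$ cannot be pushed up into $\operatorname{rad}/\operatorname{rad}^2$, I would argue that $\operatorname{Ext}^1(L_{\chi}(\la_1,\la_2), L_{\chi}(\la_1,\la_2)) = 0$: a nonsplit self-extension would, by self-duality and projectivity, force a larger projective cover, contradicting the length-$4$ count; alternatively this vanishing follows from the structure of $Z_{\chi}(\la_1,\la_2)$, which shows $\operatorname{Ext}^1$ between these simples is nonzero only in the "mixed" direction.

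The main obstacle I expect is pinning down $\operatorname{rad}^2$ precisely — i.e. ruling out the a priori alternative that $P_{\chi}(\la_1,\la_2)$ has three radical layers $L_{\chi}(\la_1,\la_2), L_{\chi}(\la_2,\la_1)^{\oplus 2}$ arranged differently, or that one $L_{\chi}(\la_2,\la_1)$ sits in $\operatorname{rad}^2$ rather than $\operatorname{rad}/\operatorname{rad}^2$. Both are excluded once one knows $\operatorname{Ext}^1(L_{\chi}(\la_2,\la_1), L_{\chi}(\la_2,\la_1)) = 0$ and $\dim \operatorname{Ext}^1(L_{\chi}(\la_1,\la_2), L_{\chi}(\la_2,\la_1)) = 2$, which can be extracted from the explicit description of $\n^+$-annihilated vectors in Section~\ref{sec:q(2)I} (the two-dimensional space of singular vectors of weight $(\la_2,\la_1)$ in the relevant baby Verma modules accounts for exactly two independent extensions) together with the self-duality forcing the head-layer multiplicity to equal the socle-layer multiplicity in $P_{\chi}(\la_2,\la_1)$. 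With these $\operatorname{Ext}$ computations in hand, the radical series is uniquely determined as stated, and the identification $\operatorname{rad}^2 P_{\chi}(\la_1,\la_2) = \Soc P_{\chi}(\la_1,\la_2) = \operatorname{head} P_{\chi}(\la_1,\la_2) = L_{\chi}(\la_1,\la_2)$ follows.
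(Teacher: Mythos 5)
Your overall strategy (Brauer reciprocity, the known length-two uniserial structure of $Z_{\chi}(\la_1,\la_2)$ and $Z_{\chi}(\la_2,\la_1)$, and self-duality of projectives over the symmetric algebra $\rea{\chi}{\g}$) is the same one the paper relies on — its proof is a one-line reference to the symmetric-algebra argument of Germoni — but your write-up has two concrete problems. First, an internal miscount: you correctly deduce from reciprocity that $P_{\chi}(\la_1,\la_2)$ has composition factors $L_{\chi}(\la_1,\la_2)$ and $L_{\chi}(\la_2,\la_1)$ each with multiplicity two (length four), but then assert that after placing the head and the socle there remains ``one copy of $L_{\chi}(\la_1,\la_2)$ and two copies of $L_{\chi}(\la_2,\la_1)$'' in the middle — that would be length five. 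In fact the head and socle exhaust both copies of $L_{\chi}(\la_1,\la_2)$, so your discussion of the ``middle copy of $L_{\chi}(\la_1,\la_2)$'' and of $\operatorname{Ext}^1(L_{\chi}(\la_1,\la_2),L_{\chi}(\la_1,\la_2))$ concerns a factor that does not exist; the only genuine question is whether both copies of $L_{\chi}(\la_2,\la_1)$ lie in $\operatorname{rad}/\operatorname{rad}^2$, or whether $P_{\chi}(\la_1,\la_2)$ is uniserial with layers $L_{\chi}(\la_1,\la_2),\,L_{\chi}(\la_2,\la_1),\,L_{\chi}(\la_2,\la_1),\,L_{\chi}(\la_1,\la_2)$.

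That is precisely where your justification fails. You propose to read $\dim\operatorname{Ext}^1(L_{\chi}(\la_1,\la_2),L_{\chi}(\la_2,\la_1))=2$ off the two-dimensional space of $\n^+$-annihilated vectors of weight $(\la_2,\la_1)$ in the baby Verma module. But that space is just the highest-weight space of the unique submodule $L_{\chi}(\la_2,\la_1)$ of $Z_{\chi}(\la_1,\la_2)$: it is two-dimensional only because $V_{\chi}(\la_2,\la_1)$ is two-dimensional, and since it is isomorphic to $V_{\chi}(\la_2,\la_1)$ as an $\h$-module (type $M$, so $\End=K$) it yields a one-dimensional $\Hom(Z_{\chi}(\la_2,\la_1),Z_{\chi}(\la_1,\la_2))$, not two independent extensions; so the uniserial alternative is not excluded by what you wrote. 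A correct way to close the gap along your lines is to first prove $\operatorname{Ext}^1(L_{\chi}(\la_2,\la_1),L_{\chi}(\la_2,\la_1))=0$: the Verma filtration of $P_{\chi}(\la_2,\la_1)$ gives a short exact sequence with sub $Z_{\chi}(\la_1,\la_2)$ and quotient $Z_{\chi}(\la_2,\la_1)$, hence $\operatorname{rad}P_{\chi}(\la_2,\la_1)$ is an extension of $\operatorname{rad}Z_{\chi}(\la_2,\la_1)\cong L_{\chi}(\la_1,\la_2)$ by $Z_{\chi}(\la_1,\la_2)$, so its head is a quotient of $L_{\chi}(\la_1,\la_2)\oplus L_{\chi}(\la_1,\la_2)$ and contains no $L_{\chi}(\la_2,\la_1)$. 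A uniserial $P_{\chi}(\la_1,\la_2)$ would contain the subquotient $\operatorname{rad}/\operatorname{rad}^3$, a nonsplit self-extension of $L_{\chi}(\la_2,\la_1)$, contradicting this vanishing; together with $\operatorname{soc}P_{\chi}(\la_1,\la_2)=L_{\chi}(\la_1,\la_2)$ (symmetric algebra) this forces the stated radical series. Alternatively, construct the projective covers explicitly as in Xiao/Germoni — which is in effect what the paper does.
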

\begin{proof}
Since $\rea{\chi}{\g}$ is a symmetric algebra, we can
argue similarly as \cite[proof of Proposition~5.1.3]{Ger}. The
details will be omitted here.
\end{proof}

%Denote $\La_{\chi}^{\neq}=\{(\la_1, \la_2) \in \La_{\chi}| \; \la_1
%\neq \la_2\}$. Let $\La_{\chi}^{\frac{\neq}{2}}$ be a subset of
%$\La_{\chi}^{\neq}$ consisting of one and only one element from each
%pair $(\la_1,\la_2)$ and $(\la_2,\la_1)$ in $\La_{\chi}^{\neq}$.
%Denote $\La_{\chi}^{=}=\{(\la_1, \la_2) \in \La_{\chi}| \; \la_1 =
%\la_2\}$.

\begin{proposition}\label{prop-q(2)-ss-equal}
Let $\g =\mathfrak{q}(2)$, and let $\chi \in \ev\g^*$ be semisimple
such that $\chi(e)=\chi(f)=0$ and $\chi(h_1) = \chi(h_2) \neq 0$. Then
\begin{itemize}
\item[(i)] For each $(\la_1,\la_1) \in \La_{\chi}$, the baby Verma module $Z(\la_1,\la_1)$ is
projective and simple.

\item[(ii)] For $\la=(\la_1,\la_2) \in
\La_{\chi}$ with $\la_1 \neq \la_2$, there is a block with exactly two simple modules
$L_{\chi}(\la_1,\la_2)$ and $L_{\chi}(\la_2,\la_1)$,
and this block is isomorphic to the algebra
given by the quiver
\[\xymatrix@C=2cm{
\bullet
 \ar@/^1pc/[r]^{\alpha'}
 \ar@/^2pc/@<.7ex>[r]^{\alpha}
& \bullet
 \ar@/^1pc/[l]^{\beta'}
 \ar@/^2pc/[l]^{\beta}
},\] with relations $\alpha\circ \beta =\beta \circ
\alpha=\alpha'\circ \beta' =\beta' \circ \alpha'=0$, $\alpha'\circ
\beta = \alpha\circ \beta'$, and $\beta'\circ \alpha=\beta \circ
\alpha'$.
\end{itemize}
\end{proposition}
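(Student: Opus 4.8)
The plan is to analyze the two cases separately, drawing on the explicit module data already recorded in Section~\ref{sec:q(2)I} and the structural facts from \ref{subsubsec:q(2)-rea-ss-eq-nonzero}. Part (i) is essentially immediate: for $\la_1=\la_2$ we showed in \ref{subsubsec:q(2)-Verma-equal} that $Z_\chi(\la_1,\la_1)$ is irreducible of type $M$, and by the Brauer reciprocity discussed above $(P_\chi(\la_1,\la_1):Z_\chi(\mu))$ is $1$ for $\mu=(\la_1,\la_1)$ and $0$ otherwise, so $P_\chi(\la_1,\la_1)=Z_\chi(\la_1,\la_1)=L_\chi(\la_1,\la_1)$. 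This yields (i).

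For part (ii), I would first pin down the block structure. Since the Cartan invariants $[Z_\chi(\mu):L_\chi(\la)]$ vanish unless $\mu,\la\in\{(\la_1,\la_2),(\la_2,\la_1)\}$, the two simples $L_\chi(\la_1,\la_2)$ and $L_\chi(\la_2,\la_1)$ lie in a single block $\mathcal B$ containing no others; by symmetry ($\la_1\leftrightarrow\la_2$) the roles of the two weights are interchangeable, so the two projective covers have the same Loewy structure as described in Lemma~\ref{lem:q(2)-ss-equal}. In particular each $P_\chi$ is uniserial-of-length-three only up to the fact that the middle layer is $L\oplus L$; the head and socle are the ``other'' simple, wait --- more precisely, by Lemma~\ref{lem:q(2)-ss-equal}, $\text{head}\,P_\chi(\la_1,\la_2)=\text{soc}\,P_\chi(\la_1,\la_2)=L_\chi(\la_1,\la_2)$ and the radical layer is $L_\chi(\la_2,\la_1)^{\oplus 2}$, and symmetrically for $P_\chi(\la_2,\la_1)$.

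Next I would extract the quiver. Label the two vertices by $L_\chi(\la_1,\la_2)$ and $L_\chi(\la_2,\la_1)$. The arrows correspond to a basis of $\text{Ext}^1$ between the simples, i.e.\ to the number of copies of each simple in $\text{rad}/\text{rad}^2$ of the relevant projective cover; Lemma~\ref{lem:q(2)-ss-equal}(2) tells us there are exactly two arrows from each vertex to the other (these are $\alpha,\alpha'$ in one direction and $\beta,\beta'$ in the other), and no loops since $\text{rad}\,P/\text{rad}^2\,P$ contains no copy of the head. It remains to identify the relations. Since $\dim P_\chi(\la)=2\dim Z_\chi(\la_1,\la_2)=2\cdot 4p$, while $\dim L_\chi(\la_1,\la_2)+\dim L_\chi(\la_2,\la_1)=4p$ (the two composition factors of $Z_\chi(\la_1,\la_2)$), one reads off $\dim e_\lambda A e_\mu$ for the block algebra $A$ in each of the four cases $\lambda,\mu$: length-three radical series with layers $L_{(12)}, L_{(21)}^2, L_{(12)}$ forces $\dim e_{(12)}Ae_{(12)}=2$, $\dim e_{(12)}Ae_{(21)}=2$, and symmetrically, so $\dim A = 8$. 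I would then check that the path algebra on the displayed quiver modulo the listed relations has dimension $8$ (paths: $e_1,e_2,\alpha,\alpha',\beta,\beta'$, and the single surviving length-two path in each direction, since $\alpha'\beta=\alpha\beta'$ and $\beta'\alpha=\beta\alpha'$ identify the two candidates and $\alpha\beta=\beta\alpha=\alpha'\beta'=\beta'\alpha'=0$ kill the rest), and that under this presentation the radical series matches Lemma~\ref{lem:q(2)-ss-equal}; symmetry of the algebra (Proposition~\ref{prop:sym}) forces the self-dual shape of the relations. The main obstacle is verifying that the composition-length-two relations are exactly $\alpha\beta=\beta\alpha=\alpha'\beta'=\beta'\alpha'=0$ together with the ``diagonal'' identifications $\alpha'\beta=\alpha\beta'$ and $\beta'\alpha=\beta\alpha'$, rather than some other set of quadratic relations cutting the path algebra down to dimension $8$: this requires a genuine computation inside $Z_\chi(\la_1,\la_2)$ and $P_\chi(\la_1,\la_2)$ using the explicit $\g$-action from \ref{subsubsec:q(2)-Verma-square-noneq}, tracking which composites of the connecting maps (the submodule inclusion, the quotient map, and the two generators of $\text{Ext}^1$) are zero or equal --- and here the symmetric-algebra structure, which makes $A\cong A^{\mathrm{op}}$ as based algebras, is the decisive tool for forcing the relations into the stated symmetric form. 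I would cite \cite{Ger} (as in Lemma~\ref{lem:q(2)-ss-equal}) or argue directly that a symmetric algebra with this Cartan matrix and these Ext-groups is, up to Morita equivalence, the one presented, completing the proof.
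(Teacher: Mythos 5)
Your plan matches the paper's proof in all essentials: part (i) and the block structure come from the irreducibility of $Z_\chi(\la_1,\la_1)$, Brauer reciprocity and Lemma~\ref{lem:q(2)-ss-equal}, and the remaining relations are pinned down exactly as the paper does it, namely by constructing the projective covers explicitly (in the style of Xiao \cite{Xia}) and reading off the homomorphisms between them --- the ``genuine computation'' you defer to at the end. The one caveat is that your alternative shortcut, that a symmetric algebra with this Cartan matrix and Ext-quiver must be the displayed one, is asserted rather than proved (the correct way to exploit symmetry is that the form induces a nondegenerate pairing between the arrow spaces in the two directions via multiplication into the one-dimensional socles, which a change of arrow basis then normalizes to the stated relations); since you do not carry this out, the explicit computation remains the decisive step, just as in the paper.
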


\begin{proof}
Only the last assertion of (ii) on Morita equivalence needs an
explanation. The quiver and most of relations can be read off from
Lemma~\ref{lem:q(2)-ss-equal}. To get all of the relations, one
constructs some projective modules explicitly in a similar fashion
as in Xiao \cite[Section~2.2]{Xia}, then one shows they are indeed
projective covers $P_{\chi}(\la)$. From there, one obtains all
relations of the quiver since the homomorphisms between projective
covers can be explicitly read off.
\end{proof}

\subsubsection{$\chi(h_1)=-\chi(h_2) \neq 0$}
\label{subsubsec:q(2)-rea-ss-opp-nonzero}

The high weights $\la \in \La_{\chi}$ are divided into two
cases:
\begin{itemize}
\item[(i)] $\la_1=-\la_2 \notin \mathbb{F}_p$. There are $p$ such weights.
The baby Verma module $Z(\la_1, \la_2)$ (see
\ref{subsubsec:q(2)-Verma-opposite}) is reducible with a
unique submodule $L_{\chi}(\la_1-1,\la_2+1)$ of dimension $2p$. Both the submodule
$L_{\chi}(\la_1-1,\la_2+1)$ and the quotient $L_{\chi}(\la_1,\la_2)$
are irreducible of type $M.$

\item[(ii)] $\la_1 \neq -\la_2$. There are $p(p-1)$ such weights. The
baby Verma modules (see
\ref{subsubsec:q(2)-Verma-square-noneq}) are irreducible of
type $M$.
\end{itemize}

Again, the Brauer type reciprocity holds in this case. Hence,

(i) if $\la_1=-\la_2 \notin \mathbb{F}_p$, then
$(P_{\chi}(\la_1,\la_2): Z_{\chi}(\mu)) =1$ for $\mu =
(\lambda_1,\la_2) \text{ or } (\la_1+1,\la_2-1)$, and is $0$
otherwise;

(ii) if $\la_1 \neq -\la_2$, then
$P_{\chi}(\la_1,\la_2)=Z_{\chi}(\la_1,\la_2)=L_{\chi}(\la_1,\la_2)$.

The next lemma follows from this and that
$\rea{\chi}{\g}$ is a super-symmetric algebra.

\begin{lemma}\label{lem:q(2)-ss-opposite}
We have
\begin{itemize}
\item[(1)] $\text{head }P_{\chi}(\la_1,-\la_1)\cong \text{rad}^2 P_{\chi}(\la_1,-\la_1)=\text{soc
}P_{\chi}(\la_1,-\la_1)=L_{\chi}(\la_1,-\la_1)$.
\item[(2)] $\text{rad }P_{\chi}(\la_1,-\la_1)/\text{rad}^2
 P_{\chi}(\la_1,-\la_1) \cong L_{\chi}(\la_1-1,-\la_1+1) \oplus
L_{\chi}(\la_1+1,-\la_1-1)$.
\end{itemize}
\end{lemma}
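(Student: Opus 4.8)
The plan is to argue exactly as in Lemma~\ref{lem:q(2)-ss-equal}: combine the Brauer-type reciprocity recorded just before the statement with the fact that $\rea{\chi}{\g}$ is a symmetric superalgebra (Proposition~\ref{prop:sym}) to pin down the radical layers of the indecomposable projective $P:=P_{\chi}(\la_1,-\la_1)$. First I would read off the composition factors of $P$. By the reciprocity in case~(i), $P$ has a baby Verma filtration in which $Z_\chi(\la_1,-\la_1)$ and $Z_\chi(\la_1+1,-\la_1-1)$ each occur with multiplicity one. From \S\ref{subsubsec:q(2)-rea-ss-opp-nonzero}(i), for $\mu\notin\mathbb F_p$ the baby Verma module $Z_\chi(\mu,-\mu)$ is uniserial with head $L_\chi(\mu,-\mu)$ and socle $L_\chi(\mu-1,-\mu+1)$, both of dimension $2p$. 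Hence $P$ has composition length $4$, with $L_\chi(\la_1,-\la_1)$ occurring twice and $L_\chi(\la_1-1,-\la_1+1)$, $L_\chi(\la_1+1,-\la_1-1)$ each once; the latter two are non-isomorphic since $p>2$. As $P$ is a projective cover, $\text{head}\,P=L_\chi(\la_1,-\la_1)$, and since $\rea{\chi}{\g}$ is symmetric its Nakayama permutation is trivial, so $\text{soc}\,P=L_\chi(\la_1,-\la_1)$ as well.

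Next I would bound the Loewy length $\ell$ of $P$. One has $\ell\ge 3$, for otherwise $\text{rad}\,P=\text{soc}\,P$ would be semisimple and $P$ would have at most two composition factors. Since $P$ has composition length $4$, the only remaining possibility is $\ell=4$ with $P$ uniserial; but then the self-duality of the category of $\rea{\chi}{\g}$-modules (given by the anti-automorphism of $\g$ that exchanges $\n^+$ with $\n^-$, fixes $\h$ pointwise, and preserves $\chi$ — and which therefore fixes each $V_\chi(\la)$, hence each $L_\chi(\la)$ and each $P_\chi(\la)$, as one checks from the explicit data in Section~\ref{sec:q(2)I}) would identify the radical filtration of $P$ read from the top with the socle filtration of $P$ read from the bottom. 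For a uniserial $P$ of length $4$ that forces the two middle composition factors $L_\chi(\la_1-1,-\la_1+1)$ and $L_\chi(\la_1+1,-\la_1-1)$ to be isomorphic, a contradiction. Hence $\ell=3$.

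With $\ell=3$ the rest is forced. Now $\text{rad}^2\,P$ is semisimple, hence a semisimple submodule of $P$, so $\text{rad}^2\,P\subseteq\text{soc}\,P=L_\chi(\la_1,-\la_1)$; and $\text{rad}^2\,P\neq 0$, so $\text{rad}^2\,P=\text{soc}\,P=L_\chi(\la_1,-\la_1)$, which gives~(1). Then $\text{rad}\,P/\text{rad}^2\,P$ is semisimple and carries the two remaining factors, so it equals $L_\chi(\la_1-1,-\la_1+1)\oplus L_\chi(\la_1+1,-\la_1-1)$, which gives~(2). If one prefers an explicit description, lifting $P\twoheadrightarrow L_\chi(\la_1,-\la_1)$ through $Z_\chi(\la_1,-\la_1)\twoheadrightarrow L_\chi(\la_1,-\la_1)$ produces a surjection $P\twoheadrightarrow Z_\chi(\la_1,-\la_1)$ whose kernel is a uniserial submodule isomorphic to $Z_\chi(\la_1+1,-\la_1-1)$, realizing the socle and the first radical layer directly, in the same spirit as the computation referred to in the proof of Lemma~\ref{lem:q(2)-ss-equal}. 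The one genuinely delicate point is ruling out a uniserial $P$ in the second paragraph: this really does use the self-duality of the module category together with the triviality of the Nakayama permutation, while everything else is bookkeeping with the baby Verma filtration.
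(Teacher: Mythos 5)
Your argument is essentially the proof the paper has in mind: extract the four composition factors of $P:=P_{\chi}(\la_1,-\la_1)$ from the Brauer reciprocity data together with the length-two structure of $Z_{\chi}(\mu,-\mu)$, use the (super)symmetry of $\rea{\chi}{\g}$ to get $\text{head}\,P\cong\text{soc}\,P\cong L_{\chi}(\la_1,-\la_1)$, and then settle the middle layer; the paper compresses this last step into ``argue as in Germoni.'' You are also right that excluding the uniserial configuration $L(\la_1,-\la_1),\,L(\la_1-1,-\la_1+1),\,L(\la_1+1,-\la_1-1),\,L(\la_1,-\la_1)$ is the genuinely delicate point: reciprocity, the symmetric Cartan matrix and $\text{soc}=\text{head}$ alone do not rule it out (for $p=3$ a symmetric self-injective Nakayama algebra with three simples and Loewy length four realizes exactly that formal picture), so some additional input such as your duality, or an explicit computation of the relevant $\mathrm{Ext}^1$ from the data of Section~\ref{sec:q(2)I}, really is required. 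Your bookkeeping (composition length $4$, head and socle via triviality of the Nakayama permutation, Loewy length $\geq 3$, and the final identification of $\text{rad}\,P/\text{rad}^2P$) is correct.

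The one point you should tighten is the parenthetical construction of the duality. For the queer algebra this is not automatic: the supertranspose does not preserve $\qn$ inside $\gl(n|n)$, and if ``anti-automorphism'' is taken in the super sense (so that $\tau(uv)=(-1)^{|u||v|}\tau(v)\tau(u)$ on $U(\g)$), then no such map fixes $\h$ pointwise, because applying it to $[H_1,H_1]=2h_1$ forces $\tau(H_1)=\pm\sqrt{-1}\,H_1$. A correct substitute does exist and suffices: for instance $\tau(h_i)=h_i$, $\tau(e)=f$, $\tau(f)=e$, $\tau(H_i)=\sqrt{-1}\,H_i$, $\tau(E)=\sqrt{-1}\,F$, $\tau(F)=\sqrt{-1}\,E$ extends to an anti-automorphism of $U(\mathfrak{q}(2))$ preserving the ideal defining $\rea{\chi}{\mathfrak{q}(2)}$ (here $\chi(e)=\chi(f)=0$ and $\chi$ is fixed on $\ev\h$); alternatively an ordinary (non-super-signed) anti-automorphism fixing all of $\h$ can be checked directly on the relations. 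In either case only the restriction to $\ev\h$ needs to be the identity: the contravariant dual then preserves weights, simples with distinct highest weights are non-isomorphic, so each $L_{\chi}(\la)$ is self-dual up to parity and $P$ is isomorphic to its dual, and your comparison of the radical series with the reversed socle series goes through verbatim. With that existence check supplied, your proof is complete and, modulo making the exclusion of the uniserial case explicit, coincides with the paper's.
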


\begin{proposition}\label{prop-q(2)-ss-opposite}
Let $\g=\mathfrak{q}(2)$, and let $\chi\in \ev \g^*$ be semisimple
with $\chi(h_1)=-\chi(h_2) \neq 0$. Then
\begin{itemize}
%\item[(i)] The algebra $\rea{\chi}{\g}$ has $p^2$ isomorphism
%classes of irreducible modules, i.e., $L_{\chi}(\la_1,-\la_1)$ for
%$(\la_1,-\la_1) \in \La_{\chi}$, which are $2p$-dimensional, and
%$Z_{\chi}(\la_1,\la_2)$ with $\la_1 \neq -\la_2$ which are
%$4p$-dimensional. All of them are of type $M$.
%
\item[(i)] the $p$ simple modules $L_{\chi}(\la_1,-\la_1)$ with $(\la_1,-\la_1) \in \La_{\chi}$
are $2p$-dimensional and belong to the same
block. This block is isomorphic to the quiver algebra
\[\xymatrix{
*!<0ex,-.8ex>{\stackrel{0}{\bullet}}
 \ar@<.4ex>[r]^{d_0^+}
 \ar@<-.4ex>@{<-}[r]_{d_0^-}
 \ar@/_4pc/@<-1.5ex>[rrrr]_{d_{p-1}^-}
& *!<0ex,-.8ex>{\stackrel{1}{\bullet}}
 \ar@<.4ex>[r]^{d_1^+}
 \ar@<-.4ex>@{<-}[r]_{d_1^-}
& *!<0ex,-.8ex>{\stackrel{2}{\bullet}}
 %\ar@<1ex>[l]^{a}
 \ar@{..}[r]
& *!<0ex,-.8ex>{\stackrel{p-2}{\bullet}}
 \ar@<.4ex>[r]^{d_{p-2}^+}
 \ar@<-.4ex>@{<-}[r]_{d_{p-2}^-}
& *!<0ex,-.8ex>{\stackrel{p-1}{\bullet}}
 \ar@/^4pc/@<+.5ex>[llll]_{d_{p-1}^+}
},\] with relations $(d^+)^2=(d^-)^2=d^+d^-+d^-d^+=0$, where
$d^{\pm}=\sum_{l \in \mathbb{F}_p} d_l^{\pm}$;

\item[(ii)] each $Z_{\chi}(\la_1,\la_2)$ with $\la_1 \neq -\la_2$
is projective and simple of type $M$.
\end{itemize}
\end{proposition}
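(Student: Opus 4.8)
The plan is to harvest what is already established in Subsection~\ref{subsubsec:q(2)-rea-ss-opp-nonzero} and Lemma~\ref{lem:q(2)-ss-opposite}, and then to identify the algebra structure of the block appearing in part~(i). Part~(ii) requires essentially nothing new: in \ref{subsubsec:q(2)-rea-ss-opp-nonzero}(ii) it was shown that $P_\chi(\la_1,\la_2)=Z_\chi(\la_1,\la_2)=L_\chi(\la_1,\la_2)$ whenever $\la_1\neq-\la_2$, so such a baby Verma module is simple and is its own projective cover, hence projective, and it is of type $M$ by the computation of~\ref{subsubsec:q(2)-Verma-square-noneq}. The remaining work is all in part~(i).

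First I would set up the combinatorics. Since $\chi(h_1)\neq 0$, the equation $\la_1^p-\la_1=\chi(h_1)^p$ has no solution in $\mathbb{F}_p$, and its $p$ solutions form a single coset of $\mathbb{F}_p$, which I write as $\la_1^{(l)}=\la_1^{(0)}+l$ for $l\in\mathbb{Z}/p$; the weights of $\La_\chi$ with $\la_1=-\la_2$ are then exactly $\la^{(l)}:=(\la_1^{(l)},-\la_1^{(l)})$. Write $L_l=L_\chi(\la^{(l)})$, $Z_l=Z_\chi(\la^{(l)})$, $P_l=P_\chi(\la^{(l)})$. By \ref{subsubsec:q(2)-rea-ss-opp-nonzero}(i), $Z_l$ has dimension $4p$ and is uniserial of length two with head $L_l$ and socle $L_{l-1}$, the latter of dimension $2p$; hence every $L_l$ has dimension $2p$ and is of type $M$. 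The two composition factors of $Z_l$ being $L_l$ and $L_{l-1}$, the simples $L_l$ and $L_{l-1}$ lie in a common block for each $l$, so running $l$ over $\mathbb{Z}/p$ links all $p$ of them into one block $B$; and since the remaining simples $L_\chi(\la_1,\la_2)$ with $\la_1\neq-\la_2$ are projective by~(ii) (hence form singleton blocks), $B$ contains exactly these $p$ simple modules.

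Next I would read off the quiver and most of the relations of the basic algebra $A$ of $B$ from Lemma~\ref{lem:q(2)-ss-opposite}, which gives $\text{head}\,P_l=\text{rad}^2 P_l=\text{soc}\,P_l=L_l$, $\text{rad}\,P_l/\text{rad}^2 P_l\cong L_{l-1}\oplus L_{l+1}$, and $\text{rad}^3 P_l=0$. Thus $\text{Ext}^1_B(L_l,L_m)$ is one-dimensional for $m=l\pm 1$ and vanishes otherwise, so the quiver of $A$ is the cyclic quiver on $\mathbb{Z}/p$ with a pair of opposite arrows $d_l^+\colon l\to l+1$ and $d_l^-\colon l+1\to l$ between consecutive vertices. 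The equality $\text{rad}^2 P_l=L_l$ kills every length-two path $l\to l\pm 2$, i.e. $(d^+)^2=(d^-)^2=0$, and $\text{rad}^3 P_l=0$ leaves no longer paths; so $\dim\Hom_B(P_l,P_l)=2$, the only surviving length-two paths are the two loops $d_l^-d_l^+$ and $d_{l-1}^+d_{l-1}^-$ at each vertex $l$, and these span a common one-dimensional space, yielding a relation $d_l^-d_l^+=c_l\,d_{l-1}^+d_{l-1}^-$ with $c_l\in K^\times$.

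The one genuine point — the step I expect to be the main obstacle — is to show that this last family of relations normalizes to $d^+d^-+d^-d^+=0$, i.e. to get the relative sign right. I would handle it as in the proof of Proposition~\ref{prop-q(2)-ss-equal}(ii): using the explicit module data of~\ref{subsubsec:q(2)-Verma-opposite}, build each $P_l$ concretely as an extension $0\to Z_{l+1}\to P_l\to Z_l\to 0$, verify it is the projective cover of $L_l$, and then read off all composites of the explicit homomorphisms $P_l\to P_{l\pm1}$ and $P_l\to P_l$; this produces the loop relation together with its sign. Conceptually, the anticommutator shape of the relation reflects that $\rea{\chi}{\g}$, and so the block $B$, is a symmetric superalgebra (Proposition~\ref{prop:sym}) with the arrows $d_l^\pm$ carrying odd $\Z_2$-degree coming from the odd root vector $F$ entering the socle of $Z_l$: applying the (even) symmetrizing form to the relation and propagating it around the $p$-cycle forces $\prod_l c_l=-1$, after which a rescaling of the arrows makes each $c_l=-1$. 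With this in place, $A$ is the quiver algebra in the statement and $B$ is Morita equivalent to it; everything else is bookkeeping with the structure results already established.
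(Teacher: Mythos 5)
Your proposal is correct, and its skeleton is exactly the paper's: part (ii) is the Brauer-reciprocity observation already recorded in \ref{subsubsec:q(2)-rea-ss-opp-nonzero}, and part (i) rests on the Loewy structure of the projective covers in Lemma~\ref{lem:q(2)-ss-opposite}. Where you differ is only in the endgame: the paper disposes of the identification of the block with the quiver algebra in one line, by citing ``a similar argument as the proof of \cite[Theorem~5.2.1]{Ger}'', whereas you carry that argument out yourself --- reading the Ext-quiver, the vanishing of $(d^{\pm})^2$ and of all paths of length $\ge 3$ off the radical series, and then correctly isolating the one delicate point, namely that in the loop relations $d_l^-d_l^+=c_l\,d_{l-1}^+d_{l-1}^-$ the rescaling-invariant product $\prod_l c_l$ equals $-1$ rather than $+1$ (this invariant is precisely what separates the anticommutator algebra of the statement from its commutator cousin, so it genuinely must be computed). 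Your two ways of fixing it are both viable: the explicit construction of $P_{\chi}(\la_1,-\la_1)$ as an extension of two baby Verma modules and direct inspection of composites is exactly how the paper itself handles the sibling case, Proposition~\ref{prop-q(2)-ss-equal}(ii), following \cite{Xia}; and the propagation of the even supersymmetric form of Proposition~\ref{prop:sym} around the $p$-cycle does give $\prod_l c_l=(-1)^p=-1$, \emph{provided} you actually verify the parity assertion you only gesture at --- that the two arrows joining adjacent vertices are odd, which, the simples being of type $M$, has to be extracted from the explicit singular vectors of \ref{subsubsec:q(2)-Verma-opposite} and is not visible from Lemma~\ref{lem:q(2)-ss-opposite} alone (if the arrow parities were mixed, the same propagation would yield $+1$). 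So: same structural inputs as the paper, with the Germoni citation replaced by a self-contained normalization argument whose only unchecked step is that parity computation (or, equivalently, the Xiao-style explicit computation you name as your primary route).
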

\begin{proof}
The case (ii) is clear by using Brauer reciprocity.
Part (i) follows from Lemma~\ref{lem:q(2)-ss-opposite} and a
similar argument as the proof of \cite[Theorem~5.2.1]{Ger}.
\end{proof}

\subsubsection{$\chi(h_1)=0, \chi(h_2)\neq 0$}
\label{subsubsec:q(2)-rea-ss-zero-nonzero}

The high weights $\la \in \La_{\chi}$ are divided into two
cases:
\begin{itemize}
\item[(i)] $\la_1=0$. There are $p$ such weights. The baby Verma
modules (see \ref{subsubsec:q(2)-Verma-zero-nonzero}) are
irreducible of type $Q$.
\item[(ii)] $\la_1 \neq 0$. There are $p(p-1)$ such weights. The baby
Verma modules (see \ref{subsubsec:q(2)-Verma-square-noneq}) are irreducible of type $M$.
\end{itemize}

Note in both cases we have
$Z_{\chi}(\la_1,\la_2)=L_{\chi}(\la_1,\la_2)$.

The structure theorem of associative superalgebras can be used to
estimate the dimensions of projective covers $P_{\chi}(\la_1,\la_2)$
of irreducible modules $L_{\chi}(\la_1,\la_2)$. To be precise, the
dimension of $P_{\chi}(\la_1,\la_2)$ equals the number of
composition factors of $\rea{\chi}{\g}$ isomorphic to
$L_{\chi}(\la_1,\la_2)$ if $L_{\chi}(\la_1,\la_2)$ is of type $M$,
and equals twice the number if it is type $Q$.
By the exactness of the functor
$\rea{\chi}{\g}\otimes_{\rea{\chi}{\mathfrak{b}}}-$, the number of
composition factors of $\rea{\chi}{\g}$ isomorphic to $Z_{\chi}
(\la_1,\la_2)$ equals the number of composition factors of
$\rea{\chi}{\mathfrak{b}}$ isomorphic to $V_{\chi} (\la_1,\la_2)$.
This number is $4p$ for all $\la \in \La_{\chi}$.

The dimension of $P_{\chi}(\la_1,\la_2)$ is $8p$ in case (i), and is
$4p$ in case (ii). In case (i), $P_{\chi}(\la_1,\la_2)$ are not
simple and they have a simple head
$Z_{\chi}(\la_1,\la_2)=L_{\chi}(\la_1,\la_2)$. On the other hand,
$\rea{\chi}{\g}$ is a (super-)symmetric algebra. Thus
$P_{\chi}(\la_1,\la_2)$ will have
$Z_{\chi}(\la_1,\la_2)=L_{\chi}(\la_1,\la_2)$ as its socle. We
conclude that $P_{\chi}(\la_1,\la_2)$ is a self-extension of
$L_{\chi}(\la_1,\la_2)$. As a result, the endomorphism ring
$\text{End}_{\rea{\chi}{\g}}(P_{\chi}(\la_1,\la_2))$ is isomorphic
to the ring $K[x]/\langle x^2\rangle$, where $x$ corresponds to the
projection of $P_{\chi}(\la_1,\la_2)$ to its socle. In case (ii), we
have $P_{\chi}(\la_1,\la_2)=L_{\chi}(\la_1,\la_2)$, since they have
the same dimension. Put
$$T=\bigoplus_{\la_1=0} P_{\chi}(\la_1,\la_2) \bigoplus
\bigoplus_{\la_1 \neq 0} P_{\chi}(\la_1,\la_2)^2,
$$
where for a module
$M$, $M^r$ denotes the direct sum of $r$ copies of $M$. The left
regular module $\rea{\chi}{\g}$ is isomorphic to $T^{2p}$ and
\begin{align*}
\rea{\chi}{\g} &\cong
\text{End}_{\rea{\chi}{\g}}(\rea{\chi}{\g})^{\text op} \cong
\text{End}_{\rea{\chi}{\g}}(T^{2p})^{\text op} \cong
(M_{2p}(\text{End}_{\rea{\chi}{\g}} (T)))^{\text op}   \notag\\
 & \cong (\oplus_{\la_1=0}M_{2p}(\mathfrak{q}_{1}(K[x]/\langle x^2\rangle)\bigoplus
\oplus_{\la_1\neq 0}M_{4p}(K))^{\text op}   \notag\\
 & \cong (\oplus_{\la_1=0}\mathfrak{q}_{2p}(K[x]/\langle x^2\rangle)\bigoplus
\oplus_{\la_1\neq 0}M_{4p}(K))^{\text op},
\end{align*}
where $\mathfrak{q}_n(K)$ denotes the simple associative superalgebra
consisting of all $2n\times 2n$ matrices of the form (\ref{q(n)}).
In summary, we have proved the following.
\begin{proposition}\label{prop:q(2)-ss-zero-nonzero}
Let $\g=\mathfrak{q}(2)$. Let $\chi\in \ev \g^*$ be semisimple with
$\chi(h_1)=0$ and $\chi(h_2) \neq 0$. Then,
\begin{itemize}
%\item[(i)] The superalgebra $\rea{\chi}{\g}$ has $p^2$ isomorphism classes
%of irreducible modules, i.e. $Z_{\chi}(\la_1,\la_2)$ for
%$(\la_1,\la_2) \in \La_{\chi}$. Each of them has dimension $4p$.
%
\item[(i)] every baby Verma module is irreducible:
$Z_{\chi}(\la_1,\la_2)$ is of type $M$ for $\la_1 \neq
0$, and $Z_{\chi}(\la_1, \la_2)$ is of type $Q$ for $\la_1 =0$.

\item[(ii)]
as algebras, $\rea{\chi}{\g} \cong (\oplus_{\la_1=0}
\mathfrak{q}_{2p}(K[x]/\langle x^2\rangle)\bigoplus
\oplus_{\la_1\neq 0}M_{4p}(K))^{\text op}$.
\end{itemize}
\end{proposition}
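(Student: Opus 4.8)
The plan is to assemble the proposition from the weight-space computations of Section~\ref{sec:q(2)I} together with the structure theory of finite-dimensional associative superalgebras. For part (i), I would argue as follows. Since $\chi(h_1)=0,\ \chi(h_2)\neq 0$, the weights $\la\in\La_\chi$ split according to whether $\la_1=0$ or $\la_1\neq 0$. When $\la_1\neq 0$ we are in one of the cases $\la_1^2\neq\la_2^2$ (generically) treated in Section~\ref{subsubsec:q(2)-Verma-square-noneq}, or possibly $\la_1^2=\la_2^2\neq 0$; in all these cases the table of $\n^+$-annihilated vectors shows that, once $\chi(f)=0$ is imposed, the only highest-weight vectors are $v_0,v_1$ of weight $\la$ itself, so $Z_\chi(\la_1,\la_2)$ has no proper submodule and is irreducible; moreover $V_\chi(\la)$ — hence $Z_\chi(\la)$ — is of type $M$ there. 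When $\la_1=0$ we are exactly in the situation of Section~\ref{subsubsec:q(2)-Verma-zero-nonzero}: the table there lists, besides $v_0,v_1$, a potential highest-weight vector of weight $(\la_2,0)$ which exists only if $\la_2\in\mathbb F_p^*$. But $\chi(f)=0$ forces $\la_2\in\La_\chi$ to satisfy $\la_2^p-\la_2=0$, i.e. $\la_2\in\mathbb F_p$; combined with $\la_2\neq 0$ this gives $\la_2\in\mathbb F_p^*$ — so one must look more carefully. The resolution is that the condition on $\chi(f)$ kills the relevant coefficient: a direct check using the explicit $\g$-action in Section~\ref{subsubsec:q(2)-Verma-zero-nonzero} shows that with $\chi(f)=0$ the candidate vector of weight $(\la_2,0)$ is in fact not a highest-weight vector (the formulas for $e$ and $E$ acting on $f^{b}F^\epsilon v_i$ do not vanish), so again $Z_\chi(0,\la_2)$ is irreducible; and $V_\chi(0,\la_2)$ is of type $Q$, whence so is $Z_\chi(0,\la_2)$. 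This establishes (i), and in particular $Z_\chi(\la)=L_\chi(\la)$ for every $\la$.

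For part (ii), I would first count simple modules and their types: there are $p$ weights with $\la_1=0$, giving type-$Q$ simples, and $p(p-1)$ weights with $\la_1\neq 0$, giving type-$M$ simples; all have dimension $4p=\dim V_\chi(\la)$. Next, exactness of the induction functor $\rea{\chi}{\g}\otimes_{\rea{\chi}{\mathfrak b}}-$ applied to a composition series of the left regular module $\rea{\chi}{\mathfrak b}$ shows that the multiplicity of $Z_\chi(\la)=L_\chi(\la)$ as a composition factor of $\rea{\chi}{\g}$ equals the multiplicity of $V_\chi(\la)$ in $\rea{\chi}{\mathfrak b}$, which one computes to be $4p$ for every $\la$. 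Now the structure theorem for finite-dimensional associative superalgebras (Wedderburn theory in the super setting) tells us that $\dim P_\chi(\la)$ equals this composition-factor multiplicity times $\dim L_\chi(\la)$ when $L_\chi(\la)$ is of type $M$, and twice that when it is of type $Q$. Hence $\dim P_\chi(0,\la_2)=2\cdot 4p\cdot\ ?$ — here I should be careful: the bookkeeping gives $\dim P_\chi(\la_1,\la_2)=4p$ for $\la_1\neq 0$, so $P_\chi=Z_\chi=L_\chi$ there (projective, simple), while for $\la_1=0$ one gets $\dim P_\chi(0,\la_2)=8p$, twice the dimension of $L_\chi(0,\la_2)$.

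For the $\la_1=0$ blocks I would then identify $P_\chi(0,\la_2)$ precisely. It has simple head $L_\chi(0,\la_2)$ (being a projective cover), and since $\rea{\chi}{\g}$ is a symmetric superalgebra by Proposition~\ref{prop:sym}, it has simple socle $L_\chi(0,\la_2)$ as well; being of length two with equal head and socle, it is the (unique nontrivial) self-extension of $L_\chi(0,\la_2)$. Consequently $\End_{\rea{\chi}{\g}}(P_\chi(0,\la_2))\cong K[x]/\langle x^2\rangle$, with $x$ the head-to-socle projection, and this endomorphism superalgebra is concentrated appropriately in the $\Z_2$-grading so that, packaging the $2p$ relevant projective indecomposables, one block is $\mathfrak q_{2p}(K[x]/\langle x^2\rangle)$. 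Finally I would assemble $\rea{\chi}{\g}$ via the standard Morita-type isomorphism $\rea{\chi}{\g}\cong\End_{\rea{\chi}{\g}}(T)^{\mathrm{op}}$ for a progenerator $T$ built as a suitable multiplicity-adjusted direct sum of the $P_\chi(\la)$, using $\End$ of a direct sum of type-$M$ indecomposables with trivial endomorphism ring to produce $M_{4p}(K)$ factors and $\End$ of the type-$Q$ blocks to produce $\mathfrak q_{2p}(K[x]/\langle x^2\rangle)$ factors, exactly as displayed. The main obstacle I anticipate is the delicate point in (i): correctly disposing of the conditionally-existing weight-$(\la_2,0)$ highest-weight vector once $\chi(f)=0$ is imposed, i.e. verifying that the coefficients in the tables of Section~\ref{sec:q(2)I} genuinely vanish in our restricted situation so that no extra submodule of $Z_\chi(0,\la_2)$ appears; everything afterwards is a matter of careful but routine bookkeeping with dimensions, types, and the super-Wedderburn structure theorem.
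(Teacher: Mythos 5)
Your part~(i) argument contains a genuine error at exactly the delicate point you flag. You claim that ``$\chi(f)=0$ forces $\la_2^p-\la_2=0$, i.e.\ $\la_2\in\mathbb{F}_p$'': this is false. The defining condition of $\La_\chi$ in \eqref{weight} is $\la_2^p-\la_2=\chi(h_2)^p$, which involves $\chi(h_2)$, not $\chi(f)$; since $\chi(h_2)\neq 0$ here, one gets $\la_2\notin\mathbb{F}_p$. That is the whole point: the starred vectors of weight $(\la_2,0)$ in Section~\ref{subsubsec:q(2)-Verma-zero-nonzero} (and of weight $(\la_2,\la_1)$ in Section~\ref{subsubsec:q(2)-Verma-square-noneq}, whose condition is $\la_1-\la_2\in\mathbb{F}_p^*$) simply do not exist for $\la\in\La_\chi$, because $\la_1\in\mathbb{F}_p$ while $\la_2\notin\mathbb{F}_p$. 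Your attempted ``resolution'' --- that a direct check shows the weight-$(\la_2,0)$ candidate fails to be annihilated by $\n^+$ once $\chi(f)=0$ is imposed --- is also false: the tables of Section~\ref{sec:q(2)I} are computed with no hypothesis on $\chi(f)$, and in the restricted case $\chi=0$ (where $\la_2\in\mathbb{F}_p^*$ genuinely occurs, with $\chi(f)=0$) that vector does exist and generates a proper submodule, so $Z(0,\la_2)$ is reducible there. The role of $\chi(f)$ is the opposite of what you assert: $\chi(f)\neq 0$ makes $f$ act invertibly so such vectors fail to generate proper submodules, whereas $\chi(f)=0$ makes them do so. Thus if your premise $\la_2\in\mathbb{F}_p^*$ were correct, the proposition would be false; the irreducibility in the $\la_1=0$ case stands or falls on the observation $\la_2\notin\mathbb{F}_p$, which is missing from your argument. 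The same misattribution infects your $\la_1\neq 0$ discussion (the absence of extra $\n^+$-invariants is again due to $\la_1-\la_2\notin\mathbb{F}_p$, not to $\chi(f)=0$), and your hedge ``possibly $\la_1^2=\la_2^2\neq 0$'' should instead be eliminated outright ($\la_1=\pm\la_2$ is impossible since $\la_1\in\mathbb{F}_p$, $\la_2\notin\mathbb{F}_p$); as stated, your claim that the tables show only $v_0,v_1$ in that case contradicts Section~\ref{subsubsec:q(2)-Verma-opposite}.

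Your part~(ii) is essentially the paper's argument (count the $p$ type-$Q$ and $p(p-1)$ type-$M$ simples, use exactness of $\rea{\chi}{\g}\otimes_{\rea{\chi}{\mathfrak{b}}}-$ to get multiplicity $4p$, invoke the super Wedderburn theory and the symmetry of $\rea{\chi}{\g}$ from Proposition~\ref{prop:sym} to see that $P_\chi(0,\la_2)$ is a self-extension of $L_\chi(0,\la_2)$ with endomorphism ring $K[x]/\langle x^2\rangle$, then realize $\rea{\chi}{\g}$ as $\End_{\rea{\chi}{\g}}(T)^{\rm op}$), and that bookkeeping is fine; but it rests on part~(i), so the proof is not complete until the $\la_1=0$ irreducibility is justified correctly as above. (Minor slip: $\dim Z_\chi(\la)=4p$ while $\dim V_\chi(\la)=2$.)
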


\subsection{Structure of $Z_{\chi}(\la)$ with a mixed $p$-character}
\label{subsubsec:q(2)-rea-mixed}

Let $\chi(h_1) = \chi(h_2) \neq 0$, and $\chi(f)=1$. The high
weights $\la \in \La_{\chi}$ are divided into two cases:
\begin{itemize}
\item[(i)] $\la_1 =\la_2$. There are $p$ such weights. The baby Verma
modules (see \ref{subsubsec:q(2)-Verma-equal}) are
irreducible of type $M$ and pairwise non-isomorphic.
\item[(ii)]
$\la_1 \neq \la_2$. There are $p(p-1)$ such weights. The baby Verma
modules (see \ref{subsubsec:q(2)-Verma-square-noneq}) are
irreducible of type $M$. We have $Z_{\chi}(\la_1,\la_2) \cong
Z_{\chi}(\la_2,\la_1)$ and there is no other isomorphism among
these baby Verma modules.
\end{itemize}

%Let $\La_{\chi}^{\frac{\neq}{2}}$ and $\La_{\chi}^{=}$ be defined
%the same way as in \ref{subsubsec:q(2)-rea-ss-eq-nonzero}.
Arguing similarly as in \ref{subsubsec:q(2)-rea-ss-zero-nonzero}, we
prove the following.

\begin{proposition}\label{prop:q(2)-mixed}
Let $\g=\mathfrak{q}(2)$. Let $\chi \in \ev \g^*$ be such that
$\chi(h_1) = \chi(h_2) \neq 0$ and $\chi(f)=1$. Then,
\begin{itemize}
\item[(i)] every baby Verma module is simple, $4p$-dimensional and  of type $M$.

\item[(ii)] for $(\la_1,\la_1) \in \La_{\chi}$,
the baby Verma module  $Z_{\chi}(\la_1,\la_1)$ is
projective.

\item[(iii)] for
$(\la_1,\la_2) \in \La_{\chi}$ with $\la_1 \neq \la_2$, the projective
cover is a self-extension of
$Z_{\chi}(\la_1,\la_2)$.

\item[(iv)] as algebras, $\rea{\chi}{\g} \cong M_{4p}(K)^{\oplus p} \oplus M_{4p}(K[x]/\langle
x^2 \rangle)^{\oplus \frac{p(p-1)}{2}}.$
\end{itemize}
\end{proposition}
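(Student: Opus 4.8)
The plan is to mirror the analysis of \ref{subsubsec:q(2)-rea-ss-zero-nonzero}, since the hypothesis $\chi(h_1)=\chi(h_2)\neq 0$ together with $\chi(f)=1$ puts us in a situation where all simple modules turn out to be of type $M$, which will ultimately force the algebra to decompose into ordinary matrix blocks and ``self-extension'' blocks. First I would establish (i): for $\la_1=\la_2$ the baby Verma modules are as in \ref{subsubsec:q(2)-Verma-equal}, hence irreducible of type $M$; for $\la_1\neq\la_2$ they are as in \ref{subsubsec:q(2)-Verma-square-noneq}, again irreducible of type $M$ (the irreducibility statements in those subsections do not use $\chi(f)=0$, only that $\chi(e)=0$, so they carry over). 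A dimension count gives $\dim Z_\chi(\la)=4p$ in every case. The only subtlety is the isomorphism classes: using $\chi(f)=1$ one checks directly from the explicit $\g$-action that the assignment $v_0\mapsto v_0$, $v_1\mapsto v_1$ (suitably rescaled) intertwines $Z_\chi(\la_1,\la_2)$ and $Z_\chi(\la_2,\la_1)$ when $\la_1\neq\la_2$, and that these are the only coincidences; this is the $\chi(f)=1$ analogue of the weight-swap symmetry, and will be the place requiring a short but genuine computation.

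Next I would count composition factors of the regular module. By exactness of $\rea{\chi}{\g}\otimes_{\rea{\chi}{\mathfrak b}}(-)$, the multiplicity of $Z_\chi(\la)$ as a section of $\rea{\chi}{\g}$ equals the multiplicity of $V_\chi(\la)$ as a section of $\rea{\chi}{\mathfrak b}$, which is $4p$ for every $\la$ (as in \ref{subsubsec:q(2)-rea-ss-zero-nonzero}). Since every $Z_\chi(\la)$ is simple here, the multiplicity of the simple $L_\chi(\la)=Z_\chi(\la)$ in $\rea{\chi}{\g}$ is $4p$. Because $L_\chi(\la)$ is of type $M$ with $\dim L_\chi(\la)=4p$, the structure theory of symmetric superalgebras (Proposition~\ref{prop:sym}) gives $\dim P_\chi(\la)=4p\cdot\big(\#\{\mu:L_\chi(\mu)\cong L_\chi(\la)\}\big)$: this is $4p$ when $\la_1=\la_2$ (forcing $P_\chi(\la)=Z_\chi(\la)$, i.e. projectivity, which is (ii)), and $8p$ when $\la_1\neq\la_2$ since then $P_\chi(\la_1,\la_2)$ sees both $Z_\chi(\la_1,\la_2)$ and $Z_\chi(\la_2,\la_1)$, each once. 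In the latter case $P_\chi(\la)$ has simple head $Z_\chi(\la)$ and, by symmetry of $\rea{\chi}{\g}$, simple socle $Z_\chi(\la)$, with the unique $Z_\chi(\la)$ in between; that is, $P_\chi(\la)$ is a self-extension of $Z_\chi(\la)$, giving (iii), and $\End_{\rea{\chi}{\g}}(P_\chi(\la))\cong K[x]/\langle x^2\rangle$.

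Finally, for (iv) I would bundle the projective indecomposables into a progenerator $T$ with one copy of each $P_\chi(\la_1,\la_1)$ and, because $Z_\chi(\la_1,\la_2)\cong Z_\chi(\la_2,\la_1)$, one copy of $P_\chi(\la_1,\la_2)$ per unordered pair $\{\la_1,\la_2\}$. Since each $L_\chi(\la)$ is of type $M$ (no type $Q$ blocks arise here, in contrast to \ref{subsubsec:q(2)-rea-ss-zero-nonzero}), the Morita/endomorphism-algebra computation $\rea{\chi}{\g}\cong\End_{\rea{\chi}{\g}}(\rea{\chi}{\g})^{\mathrm{op}}$ decomposes as a product over isomorphism classes of a matrix algebra $M_{4p}(\End_{\rea{\chi}{\g}}(P_\chi(\la)))^{\mathrm{op}}$; this yields $M_{4p}(K)$ for each of the $p$ weights with $\la_1=\la_2$ and $M_{4p}(K[x]/\langle x^2\rangle)$ for each of the $\tfrac{p(p-1)}{2}$ unordered pairs, proving (iv). The main obstacle I anticipate is not in this final bookkeeping but in pinning down the isomorphisms among the baby Verma modules in (i)—establishing that $Z_\chi(\la_1,\la_2)\cong Z_\chi(\la_2,\la_1)$ and that there are no further isomorphisms—since this is where the hypothesis $\chi(f)=1$ (rather than $0$) genuinely enters and must be checked against the explicit action formulas.
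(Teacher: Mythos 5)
Your overall route is exactly the paper's: read off the baby Verma structure from Section~\ref{sec:q(2)I}, count composition factors of the regular module via exactness of $\rea{\chi}{\g}\otimes_{\rea{\chi}{\mathfrak{b}}}-$, bound $\dim P_{\chi}(\la)$ using that all simples are of type $M$ together with the symmetry of $\rea{\chi}{\g}$ (Proposition~\ref{prop:sym}) to get head $=$ socle, and finish with the endomorphism-algebra computation as in \ref{subsubsec:q(2)-rea-ss-zero-nonzero}. Parts (ii)--(iv) of your plan go through as written and coincide with the paper's argument.

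There is, however, a genuine flaw in your treatment of (i). Subsections \ref{subsubsec:q(2)-Verma-equal} and \ref{subsubsec:q(2)-Verma-square-noneq} contain no irreducibility statements; they only list the vectors annihilated by $\n^+$, and irreducibility does \emph{not} ``carry over'' from $\chi(e)=0$ alone. Indeed, for the semisimple character with the very same lists, $Z_{\chi}(\la_1,\la_2)$ with $\la_1-\la_2\in\mathbb{F}_p^*$ is \emph{reducible} (case (ii) of \ref{subsubsec:q(2)-rea-ss-eq-nonzero}), and in the present situation every $\la\in\La_{\chi}$ with $\la_1\neq\la_2$ has $\la_1-\la_2\in\mathbb{F}_p^*$ (since $\chi(h_1)=\chi(h_2)$), so the extra singular vectors of weight $(\la_2,\la_1)$ are always present. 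What forces irreducibility is precisely $\chi(f)\neq 0$: in $\rea{\chi}{\g}$ one has $f^p=\chi(f)^p=1$, so $f$ acts invertibly, and the submodule generated by the weight-$(\la_2,\la_1)$ singular vectors is free of full rank over $U_{\chi}(\n^-)$, hence is all of $Z_{\chi}(\la_1,\la_2)$; equivalently, the homomorphism $Z_{\chi}(\la_2,\la_1)\to Z_{\chi}(\la_1,\la_2)$ determined by those singular vectors is surjective and therefore, both modules being $4p$-dimensional, an isomorphism. This one computation simultaneously yields the irreducibility you asserted and the isomorphism $Z_{\chi}(\la_1,\la_2)\cong Z_{\chi}(\la_2,\la_1)$ you postponed. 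Note also that the intertwiner cannot be of the form $v_0\mapsto v_0$, $v_1\mapsto v_1$: the generators of the two modules have different $\ev\h$-weights, so any nonzero homomorphism must send the highest weight space of $Z_{\chi}(\la_2,\la_1)$ onto the span of the two $\star$-vectors of weight $(\la_2,\la_1)$ in $Z_{\chi}(\la_1,\la_2)$. With (i) repaired in this way (the absence of further isomorphisms follows from comparing heads, i.e.\ weight supports), the remainder of your argument is the paper's.
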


\subsection{Structures of $\rea{0}{\g}$-modules}

Let $\chi=0$. We shall drop the index $\chi$ or $0$ for the baby
Verma, projective and simple modules of $\rea{0}{\g}$.

%\subsubsection{Baby Verma modules $Z (\la)$}

We artificially divide the baby Verma modules into the following.

\begin{itemize}
\item[(i)] $(\la_1,\la_2)=(0,0)$. The baby Verma module (see
\ref{subsubsec:q(2)-Verma-0}) has a unique submodule, $L
(p-1,1-p)$, of dimension $(2p-2)$, while the irreducible
quotient $L (0,0)$ is two-dimensional.

\item[(ii)] $(\la_1, -\la_1), \la_1 \neq 0$.
There are $(p-1)$ such weights.  By analyzing the vectors
annihilated by $\ev \n^+$, we see that each baby Verma module $Z (\la_1,-\la_1)$ (see
\ref{subsubsec:q(2)-Verma-opposite}) has a
composition series of four simple modules
\[
L(\la_1, -\la_1),\; L (p-1-\la_1, 1-p+\la_1),\; L (\la_1-1,-\la_1
+1),\;  L (-\la_1, \la_1).
\]
The dimension of $L (\la_1,-\la_1)$ is the number $b$ determined
by the conditions $0 \leq b<2p $ and $b \equiv (4 \la_1 -2)
\pmod{2p}$.

\item[(iii)] $(\la_1, \la_1), \la_1 \neq 0$.
There are $(p-1)$ such weights. The baby Verma modules (see
\ref{subsubsec:q(2)-Verma-equal}) are simple of type $M$.

\item[(iv)] $(0,\la_2), \la_2 \neq 0$.
There are $(p-1)$ such weights.
By examining the vectors
annihilated by  $\ev \n^+$, we see that the baby Verma module
$Z (0,\la_2)$ (see \ref{subsubsec:q(2)-Verma-zero-nonzero}) has a
simple head $L (0,\la_2)$ and a simple socle $L
(\la_2,0)$, both $2p$-dimensional and of type $Q$.

\item[(v)] $(\la_1, 0), \la_1 \neq 0$. This case is similar to
case (iv), thus omitted.

\item[(vi)] $(\la_1,\la_2)$ with $0\neq \la_1^2 \neq \la_2^2 \neq
0$. There are $(p-1)(p-3)$ such weights. Each baby Verma module $Z (\la_1,\la_2)$
(see  \ref{subsubsec:q(2)-Verma-square-noneq}) has
a unique submodule $L (\la_2,\la_1)$, which is simple of dimension
$d$, where $d$ is determined by $0 \leq d <4p$ and $ d \equiv
4(\la_2-\la_1) \pmod{p}$. The head $L (\la_1,\la_2)$ is
simple of dimension $4p-d$.
\end{itemize}

%\subsubsection{Projective modules}

By the exactness of the functor
$\rea{\chi}{\g}\otimes_{\rea{\chi}{\mathfrak{b}}}-$, the number of
composition factors of $\rea{\chi}{\g}$ isomorphic to $Z
(\la_1,\la_2)$ equals the number of composition factors of
$\rea{\chi}{\mathfrak{b}}$ isomorphic to $V (\la_1,\la_2)$. This
number is $8p$ for weight $(0,0)$, and $4p$ otherwise. The
structures of baby Verma modules have been given explicitly above.
From this we conclude that
\begin{equation*}
\dim P (\la_1,\la_2)= \begin{cases} 16p, &\text{if }
\la_1=\la_2 =0;\\
16p, &\text{if } \la_1 =-\la_2 \neq 0;\\
4p, &\text{if } \la_1=\la_2 \neq 0;\\
16p, &\text{if } \la_1=0, \la_2 \neq 0;\\
16p, &\text{if } \la_1\neq 0, \la_2=0;\\
8p, &\text{if } 0 \neq \la_1^2 \neq \la_2^2 \neq 0.
\end{cases}
\end{equation*}

From this we further conclude that $Z (\la_1,\la_2)$ is projective
if $\phi(\la_1,\la_2) \neq 0$ and $\la_i \neq 0$, $i=1,2$. In
particular, $Z (a,a)$ is projective and simple for $a \in \mathbb
F_p^*$ as claimed in Theorem~\ref{thm:semisimple}.

\subsection{Structure of $\rea{\chi}{\g}$-modules with $\chi$
nilpotent}

Assume $\chi(e)=\chi(h_1)=\chi(h_2)=0$, and $\chi(f)=1$. Since
$\chi$ is of standard Levi form (cf. \cite[Definition~10.1]{Jan}),
each baby Verma module $Z_\chi(\la)$ has a unique irreducible
quotient $L_\chi(\la)$ (cf. \cite[Proposition~10.2]{Jan}, the same
argument applies here). As in the (restricted) case when $\chi =0$,
we divide the baby Verma modules according to their high weights as
follows.
\begin{itemize}
\item[(i)] $(\la_1,\la_2)=(0,0)$. The baby Verma module (see
\ref{subsubsec:q(2)-Verma-0}) is simple of type $M$. We
have an isomorphism
$
L_{\chi}(0,0) \cong L_{\chi}(p-1,1-p).
$

\item[(ii)] $(\la_1, -\la_1), \la_1 \neq 0$. There are $(p-1)$ such
weights. By analyzing the vectors
annihilated by $\ev \n^+$ (see
\ref{subsubsec:q(2)-Verma-opposite}), the baby Verma modules
$Z_{\chi}(\la_1,-\la_1)$ has
a simple socle $L_{\chi}(-\la_1,\la_1)$ and a simple head
$L_{\chi}(\la_1,-\la_1)$, each of dimension $2p$. We have an isomorphism
$
L_{\chi}(\la_1,-\la_1) \cong L_{\chi}(p-1-\la_1,1-p+\la_1).
$

\item[(iii)] $(\la_1, \la_1), \la_1 \neq 0$. There are $(p-1)$ such weights.
The baby Verma modules (see
\ref{subsubsec:q(2)-Verma-equal}) are simple of type $M$.

\item[(iv)] $(0,\la_2), \la_2 \neq 0$. There are $(p-1)$ such weights.
The baby Verma modules (see
\ref{subsubsec:q(2)-Verma-zero-nonzero}) are
simple of type $Q$. The vectors annihilated by $\ev \n^+$ given
in \ref{subsubsec:q(2)-Verma-zero-nonzero} provide us an
isomorphism
$
Z_{\chi}(0,\la_2) \cong Z_{\chi}(\la_2,0).
$

\item[(v)] $(\la_1, 0), \la_1 \neq 0$. This case is similar to
case (iv), and the baby Verma modules $Z_{\chi}(\la_1,0)$ are
simple of type $Q$.

\item[(vi)] $(\la_1,\la_2)$ with $0\neq \la_1^2 \neq \la_2^2 \neq
0$. There are $(p-1)(p-3)$ such weights. The baby Verma modules
(see \ref{subsubsec:q(2)-Verma-square-noneq}) are simple
of type $M$. We have an isomorphism
$
Z_{\chi}(\la_1,\la_2) \cong Z_{\chi}(\la_2,\la_1).
$
\end{itemize}

By the same argument as in the previous subsection, we
estimate the dimensions of projective covers
$P_{\chi}(\la_1,\la_2)$ of $L_{\chi}(\la_1,\la_2)$ as follows:
\begin{equation*}
\dim P_{\chi}(\la_1,\la_2)= \begin{cases} 16p & \text{for }
L_{\chi}(\la_1,-\la_1)\cong L_{\chi}(p-1-\la_1,1-p+\la_1),\la_1 \in \mathbb{F}_p;\\
% \la_1=0,1,\ldots, \frac{p-3}{2};\\
8p & \text{for } L_{\chi}(\frac{p-1}{2},\frac{p+1}{2})\cong L_{\chi}(\frac{p+1}{2},\frac{p-1}{2});\\
4p & \text{for }L_{\chi}(\la_1,\la_1), \la_1\neq 0;\\
16p & \text{for } L_{\chi}(0,\la_2) \cong L_{\chi}(\la_2,0);\\
8p & \text{for } L_{\chi}(\la_1,\la_2) \cong L_{\chi}(\la_2, \la_1),
0 \neq \la_1^2 \neq \la_2^2 \neq 0.
\end{cases}
\end{equation*}

For this, we conclude that $Z_{\chi}(\la_1,\la_1)$ are projective
simple for $\la_1 \neq 0$. Since $\rea{\chi}{\g}$ is a symmetric
algebra by Proposition~\ref{prop:sym}, $P_{\chi}(\la_1,\la_2)$ is
a self-extension of $L_{\chi}(\la_1,\la_2)=Z_{\chi}(\la_1,\la_2)$
for $0 \neq \la_1^2 \neq \la_2^2 \neq 0$.

\section{Modular representations with general $p$-characters}\label{sec:conjecture}

\subsection{The centralizer of an odd element} For a general
odd element $X \in \od \g$, let $X= X_s + X_n$ be its Jordan
decomposition (which is understood via the identification $\od \g
=\gl(n)$). As in the Lie algebra setup, we clearly have
\[
\g_{X,\bar{0}}= \g_{X_s, \bar{0}} \cap \g_{X_n, \bar{0}}.
\]
A much less trivial relation holds for the odd parts of
the corresponding centralizers.

\begin{lemma}  \label{lem:odd-cent}
Let $X=X_s+X_n$ be the Jordan decomposition of an odd element $X \in
\od \g$. Then we have
\begin{equation} \label{even-cent}
\g_{X,\bar{1}}= \g_{X_s, \bar{1}} \cap \g_{X_n, \bar{1}}
\end{equation}
and thus $\g_{X}= \g_{X_s} \cap \g_{X_n}.$
\end{lemma}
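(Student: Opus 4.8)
The plan is to reduce the whole statement to an assertion about ordinary $n\times n$ matrices and then handle the odd part by a generalized eigenspace argument. Recall from Section~\ref{subsec:Cen-Nil} that under the identification $\od\g=\gl(n)$ the odd centralizer $\g_{X,\bar1}$ is precisely the \emph{anti-centralizer} $\{C\in\gl(n):CX=-XC\}$ of $X$, and that the even statement $\g_{X,\bar0}=\g_{X_s,\bar0}\cap\g_{X_n,\bar0}$ is already recorded above (it is the classical Jordan-decomposition fact, since $X_s$ and $X_n$ are polynomials in $X$). Thus \eqref{even-cent} is equivalent to the matrix identity
\[
\{C:CX=-XC\}=\{C:CX_s=-X_sC\}\cap\{C:CX_n=-X_nC\},
\]
and once this is proved, intersecting with the even-part equality yields $\g_X=\g_{X_s}\cap\g_{X_n}$.

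The inclusion $\supseteq$ is immediate: if $CX_s=-X_sC$ and $CX_n=-X_nC$, then $CX=C(X_s+X_n)=-(X_s+X_n)C=-XC$. For $\subseteq$, I would start from $CX=-XC$ and transport $C$ across the generalized eigenspace decomposition $K^n=\bigoplus_\lambda K^n_\lambda$ of $X$, where $K^n_\lambda=\ker(X-\lambda)^n=\ker(X_s-\lambda)$. From $CX^k=(-1)^kX^kC$ one derives the operator identity $(X-\mu)^NC=(-1)^NC(X+\mu)^N$ for all $\mu\in K$ and all $N\ge1$; applying it to $v\in K^n_\lambda$, the right-hand side vanishes for $N\gg0$ exactly when $\mu=-\lambda$ (because $X$ acts on $K^n_\lambda$ as $\lambda$ plus a nilpotent), so $Cv\in K^n_{-\lambda}$. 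Hence $C$ carries $K^n_\lambda$ into $K^n_{-\lambda}$ for every $\lambda$; here the hypothesis $p>2$ is what makes anti-commutation a genuine (not vacuous) condition.

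Finally, since $X_s$ acts on $K^n_\lambda$ as the scalar $\lambda$, for $v\in K^n_\lambda$ we get $X_sCv=-\lambda\,Cv=-C(\lambda v)=-CX_sv$; as the $K^n_\lambda$ span $K^n$, this gives $CX_s=-X_sC$, and then $CX_n=C(X-X_s)=-XC+X_sC=-X_nC$ by subtraction. This completes $\subseteq$, hence \eqref{even-cent}, and the last assertion follows as above. The step I expect to be the crux is exactly this $\subseteq$ direction: unlike the commuting case, one cannot argue "$X_s,X_n$ are polynomials in $X$", since a polynomial in $X$ need not anti-commute with a given $C$; the eigenspace-swapping argument is what replaces that reasoning, and the rest is routine bookkeeping.
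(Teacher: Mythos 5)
Your proof is correct, but it takes a genuinely different route from the paper's. The paper argues by a dimension count: it puts $X$ in Jordan canonical form, quotes Horn--Johnson for the dimension of the anti-commutant, $\dim\g_{X,\bar1}=\sum_{\lambda_i=-\lambda_j}\min\{d_i,d_j\}$, explicitly describes $\g_{X_s,\bar 1}$ (block matrices supported on the pairs with $\lambda_i=-\lambda_j$) and $\g_{X_n,\bar 1}$ (via Proposition~\ref{prop:comm-anticomm}), checks that the intersection has the same dimension, and concludes from the obvious inclusion $\g_{X_s,\bar1}\cap\g_{X_n,\bar1}\subseteq\g_{X,\bar1}$. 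You instead prove the nontrivial inclusion directly: from $CX=-XC$ you get $(X-\mu)^NC=(-1)^NC(X+\mu)^N$, deduce that $C$ sends the generalized eigenspace $K^n_\lambda$ into $K^n_{-\lambda}$, and since $X_s$ acts as the scalar $\lambda$ on $K^n_\lambda$ this gives $CX_s=-X_sC$, whence $CX_n=-X_nC$ by subtraction. Your argument is more elementary and self-contained (no normal-form reduction, no citation of Horn--Johnson, no explicit intersection computation), and it isolates the right replacement for the unusable ``$X_s,X_n$ are polynomials in $X$'' argument; the paper's computation buys, as a byproduct, the explicit dimension formula for $\g_{X,\bar1}$, which your proof does not produce. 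One small quibble: your parenthetical that $p>2$ is ``what makes anti-commutation a genuine condition'' is not really relevant --- your eigenspace-swapping argument works in any characteristic (in characteristic $2$ it just degenerates to the commuting case), so nothing in the proof hinges on it.
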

\begin{proof}
Without lost of generality, we assume that $X$ is of Jordan
canonical form
\[
X=\begin{pmatrix} J_{d_1}(\la_1) & &\\
& \ddots &\\
& & J_{d_r}(\la_r)
\end{pmatrix},
\]
where $J_{d_i}(\la_i)$ denotes the $d_i \times d_i$-Jordan block
with $\la_i$ on the diagonal. Then by Horn-Johnson
\cite[Theorem~4.4.11]{HJ}, the dimension of $\g_{X,\bar{1}}$ is
given by
\[
\dim \g_{X,\bar{1}} = \sum_{\la_i=-\la_j} \min\{d_i, d_j\}.
\]
Recall the description of $\g_{X_n,\bar{1}}$ in
Sect.~\ref{subsec:Cen-Nil}. On the other hand, an element in
$\g_{X_s, \bar{1}}$ has the form (\ref{q(n)}) with $A=0$ and
\[
B=\begin{pmatrix} B_{11} &  \cdots & B_{1r}\\
\vdots & \ddots & \vdots\\
B_{r1}& \cdots  & B_{rr}
\end{pmatrix},
\]
where the $d_i \times d_j$-matrix $B_{ij}=0$ if $\la_i \neq
-\la_j$ and $B_{ij}$ is arbitrary if $\la_i= -\la_j$. It follows
that the dimension of $\g_{X_s, \bar{1}} \cap \g_{X_n, \bar{1}}$
is $\sum_{\la_i=-\la_j} \min\{d_i, d_j\}$, which is same as $\dim
\g_{X, \bar{1}}$ given above. Obviously $\g_{X_s, \bar{1}} \cap
\g_{X_n, \bar{1}} \subseteq \g_{X,\bar{1}}$, so $\g_{X,\bar{1}}=
\g_{X_s, \bar{1}} \cap \g_{X_n, \bar{1}}.$
\end{proof}

Assume now that the odd element $X \in \od \g$ is semisimple and
hence is $\text{GL}(n)$-conjugate to some
element $Y\in \od \g$ of the form (\ref{q(n)}) with $A=0$ and
\[
B=\text{diag}(\underbrace{0,\ldots,0}_m,\underbrace{\mu_1,\ldots,
\mu_1}_{r_1},\underbrace{-\mu_1,\ldots,-\mu_1}_{s_1},\ldots,
\underbrace{\mu_t,\ldots,\mu_t}_{r_t},\underbrace{-\mu_t,\ldots,-\mu_t}_{s_t}),
\]
where $\mu_1, \ldots, \mu_t$ are squarely distinct nonzero scalars,
and $m, r_i, s_i \geq 0$. The next lemma follows by a direct computation.

\begin{lemma} \label{lem:oddcent}
For a semisimple odd element $X \in \od \g$ as above, the centralizer $\g_X$
is isomorphic to a direct sum $\mathfrak{q}(m)\oplus \gl(r_1|s_1)\oplus \cdots
\oplus \gl(r_t|s_t)$.
\end{lemma}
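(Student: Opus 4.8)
The plan is to compute the even and odd parts of the centralizer directly from the matrix description recalled in Section~\ref{subsec:Cen-Nil}, and then reorganize the result. Let $Y \in \gl(n)$ be the matrix such that $X$ is of the form~\eqref{q(n)} with $A = 0$ and $B = Y$, so that $Y$ is the diagonal matrix displayed just before the lemma. First I would decompose $\K^n$ into the eigenspaces of $Y$: the distinct eigenvalues are $0$, of multiplicity $m$, and $\pm \mu_i$ $(1 \le i \le t)$, of multiplicities $r_i$ and $s_i$. These $2t+1$ eigenvalues are genuinely pairwise distinct, since $\mu_i \neq 0$ by hypothesis, $\mu_i \neq -\mu_i$ because $p>2$, and $\mu_i \neq \pm\mu_j$ for $i \neq j$ because the $\mu_i$ are squarely distinct.

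Next I would identify the two graded pieces. By Section~\ref{subsec:Cen-Nil}, $\g_{X,\bar 0}$ consists of the even matrices~\eqref{q(n)} whose $A$ commutes with $Y$; as $Y$ is diagonal with distinct eigenvalues, these are exactly the matrices block diagonal for the eigenspace decomposition, so $\g_{X,\bar 0} \cong \gl(m) \oplus \bigoplus_{i=1}^t\big(\gl(r_i) \oplus \gl(s_i)\big)$. Likewise $\g_{X,\bar 1}$ consists of the odd matrices~\eqref{q(n)} whose $B$ anti-commutes with $Y$; writing $B = (B_{kl})$ in eigenspace-block form, the relation $YB + BY = 0$ forces $B_{kl} = 0$ unless the eigenvalues labelling $k$ and $l$ sum to zero, so that $B$ has a free $m \times m$ block on the $0$-eigenspace together with, for each $i$, two free blocks coupling the $\mu_i$- and $(-\mu_i)$-eigenspaces; thus $\g_{X,\bar 1} \cong \gl(m) \oplus \bigoplus_{i=1}^t\big(\Hom(\K^{s_i}, \K^{r_i}) \oplus \Hom(\K^{r_i}, \K^{s_i})\big)$.

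Then I would reassemble these as a direct sum of Lie superalgebras. The part supported on the $0$-eigenspace is the set of matrices~\eqref{q(n)} with $A, B$ of size $m$ supported there, a copy of $\mathfrak{q}(m)$ embedded blockwise in $\g$. For each $i$, I would equip $\K^{r_i} \oplus \K^{s_i}$ (the $\mu_i$-eigenspace together with the $(-\mu_i)$-eigenspace) with the super vector space structure $\K^{r_i|s_i}$ having the $\mu_i$-eigenspace as even part; then the even summand $\gl(r_i) \oplus \gl(s_i)$ of $\g_{X,\bar 0}$ is exactly $\gl(r_i|s_i)_{\bar 0}$, and the odd summand $\Hom(\K^{s_i}, \K^{r_i}) \oplus \Hom(\K^{r_i}, \K^{s_i})$ of $\g_{X,\bar 1}$ is exactly $\gl(r_i|s_i)_{\bar 1}$. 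As a superspace this identifies $\g_X$ with $\mathfrak{q}(m) \oplus \gl(r_1|s_1) \oplus \cdots \oplus \gl(r_t|s_t)$. Since the summands are supported on pairwise disjoint collections of eigenspace-blocks, matrix products across distinct summands vanish and so do the brackets between them; on the $\mathfrak{q}(m)$-summand the bracket is by construction that of $\mathfrak{q}(m)$, while on the $\gl(r_i|s_i)$-summand one must check that the $\g$-bracket --- the ordinary commutator on even elements, the natural action on a mixed pair, and the anti-commutator $B_1 B_2 + B_2 B_1$ on two odd elements --- matches the three cases of the supercommutator of $\gl(r_i|s_i)$ under the identification above.

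I expect this last matching to be the main (though still routine) obstacle: the $\Z_2$-grading of $\g$ is the $A$-versus-$B$ splitting of~\eqref{q(n)}, which is a priori unrelated to the grading of $\K^{r_i|s_i}$ by $Y$-eigenvalue, so one has to verify by a short explicit matrix computation --- using that an odd element of $\gl(r_i|s_i)$ is block-off-diagonal in the $\K^{r_i} \oplus \K^{s_i}$ splitting --- that, for instance, the anti-commutator of two odd elements lands in the even part with the correct block-diagonal entries. Once the super vector space structure on each $\K^{r_i|s_i}$ is fixed as above, this check is immediate, and with it the isomorphism $\g_X \cong \mathfrak{q}(m) \oplus \gl(r_1|s_1) \oplus \cdots \oplus \gl(r_t|s_t)$ of Lie superalgebras follows.
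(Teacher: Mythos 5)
Your argument is correct and is exactly the ``direct computation'' that the paper invokes without spelling out: identify $\g_{X,\bar 0}$ and $\g_{X,\bar 1}$ via the commuting/anti-commuting conditions from Section~4.1, use that the $2t+1$ eigenvalues $0,\pm\mu_i$ are pairwise distinct to get the block description, and reassemble, checking the brackets. Your final bracket check (that the anticommutator of the off-diagonal blocks reproduces the odd-odd supercommutator of $\gl(r_i|s_i)$) is precisely the routine verification the paper leaves to the reader, so there is nothing to add.
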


\subsection{A conjecture of Morita super-equivalence}
\label{sec:MoritaSuper}

Given a finite dimensional
superalgebra $A$, we denote by $A$-$\mathfrak{mod}$ the category of
finite-dimensional $A$-modules and $\Irr(A)$ the set of
isoclasses of simple $A$-supermodules.
%Let $\lfloor a \rfloor$ denote the least integer upper bound of $a$,
%and $\lceil a \rceil$ denote the largest integer lower bound of $a$.

\begin{conjecture}\label{conj:morita}
Let $\chi \in \ev \g^*$ be a $p$-character with Jordan decomposition
$\chi = \chi_s + \chi_n$. %Put $\sigma=|\{i:\chi(h_i)\neq 0\}|$.
Let $b_i = \dim \g_i - \dim \g_{\chi_s,i}$ for $i \in \Z_2$. Then
there are adjoint exact functors $F$ and $G$:
\[
\rea{\chi}{\g}\text{-}\mathfrak{mod}\;\begin{picture}(22,10)

\put(0,3){$\longrightarrow$}

\put(0,-2){$\longleftarrow$}

\put(5,-7){$\scriptstyle{G}$}

\put(6,9){$\scriptstyle{F}$}

\end{picture}\;
 \rea{\chi}{\g_{\chi_s}}\text{-}\mathfrak{mod}
\]
satisfying the following.
\begin{itemize}
\item[(i)] Suppose $b_1$ is even. Then $F$ and $G$ are inverse
equivalences of categories, inducing a type-preserving bijection
between  $\Irr (\rea{\chi}{\g})$ and $\Irr
(\rea{\chi}{\g_{\chi_s}})$. Moreover, for a
$\rea{\chi}{\g_{\chi_s}}$-module $V$, $\dim G(V) =p^{\frac{b_0}{2}}
2^{\frac{b_1}{2}} \dim V.$

\item[(ii)] Suppose $b_1$ is odd. Then
\[
F \circ G \cong \text{Id}\oplus \Pi \quad G \circ F \cong
\text{Id}\oplus \Pi,
\]
where $\Pi$ is the parity change functor in a module category of a
superalgebra. The functor $F$ induces a bijection of $\Irr
(\rea{\chi}{\g})$ of type $M$ (respectively, of type $Q$) and $\Irr
(\rea{\chi}{\g_{\chi_s}})$ of type $Q$ (respectively, of type $M$).
Moreover, for $V \in \Irr (\rea{\chi}{\g_{\chi_s}})$ of type $M$,
the dimension of the corresponding $\rea{\chi}{\g}$-module $G(V)$ is
$p^{\frac{b_0}{2}} 2^{\frac{b_1+1}{2}} \dim V;$ while for $V \in
\Irr (\rea{\chi}{\g_{\chi_s}})$ of type $Q$,  the dimension of
$G(V)$ is $p^{\frac{b_0}{2}} 2^{ \frac{b_1-1}{2} } \dim V.$
\end{itemize}
\end{conjecture}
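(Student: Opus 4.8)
The plan is to bypass the missing parabolic/Levi structure by routing the comparison through the finite $W$-superalgebra of Section~\ref{sec:nilpotent}, realizing both $\rea{\chi}{\g}$ and $\rea{\chi}{\g_{\chi_s}}$ as (super) matrix algebras over one and the same $W$-superalgebra and then reading off the equivalence from Morita theory for such algebras. After a $GL(n)$-conjugation we may assume, by Lemma~\ref{lem:oddcent}, that $\chi_s$ corresponds to a diagonal odd element, so $\g_{\chi_s}\cong\mathfrak q(m)\oplus\gl(r_1|s_1)\oplus\cdots\oplus\gl(r_t|s_t)$, and by Lemma~\ref{lem:odd-cent} the nilpotent part satisfies $\g_\chi=(\g_{\chi_s})_{\chi_n}$. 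First I would choose, blockwise as in Section~\ref{sec:nilpotent}, a grading element $H\in\ev{\g_{\chi_s}}$ adapted to $\chi_n$; since $H$ centralizes the matrix of $\chi_s$, the operator $\ad H$ preserves $\g_{\chi_s}$ and induces compatible $\Z$-gradings on $\g$ and on $\g_{\chi_s}$ for which $\chi$ restricts to $\chi_n$ on the negative parts. The construction of Section~\ref{sec:nilpotent} then goes through with $\chi$ in place of a nilpotent character, producing a $p$-nilpotent subalgebra $\m\subset\g$ with a one-dimensional $\rea{\chi}{\m}$-module $K_\chi$; choosing the isotropic subspaces of the $(-1)$-pieces compatibly, the intersection $\m':=\m\cap\g_{\chi_s}$ is exactly the corresponding subalgebra for $(\g_{\chi_s},\chi_n)$. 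A mild extension of Proposition~\ref{prop:nil-freeness} (Skryabin's argument is insensitive to the semisimple part of the character) gives that every $\rea{\chi}{\g}$-module is free over $\rea{\chi}{\m}$ and every $\rea{\chi}{\g_{\chi_s}}$-module is free over $\rea{\chi}{\m'}$.

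Carrying this through as in the theorem of Section~\ref{sec:nilpotent} presenting $\rea{\chi}{\g}$ as a matrix algebra over $W_\chi(\g)^{\mathrm{op}}$, the induced modules $\mathcal Q_\m=\rea{\chi}{\g}\otimes_{\rea{\chi}{\m}}K_\chi$ and $\mathcal Q_{\m'}=\rea{\chi}{\g_{\chi_s}}\otimes_{\rea{\chi}{\m'}}K_\chi$ are projective generators, so that
\[
\rea{\chi}{\g}\cong M_{\delta}\bigl(W_\chi(\g)^{\mathrm{op}}\bigr),
\qquad
\rea{\chi}{\g_{\chi_s}}\cong M_{\delta'}\bigl(W_{\chi_n}(\g_{\chi_s})^{\mathrm{op}}\bigr),
\]
with $W_\chi(\g)=\End_{\rea{\chi}{\g}}(\mathcal Q_\m)$, $W_{\chi_n}(\g_{\chi_s})=\End_{\rea{\chi}{\g_{\chi_s}}}(\mathcal Q_{\m'})$, $\delta=\dim_K\rea{\chi}{\m}$ and $\delta'=\dim_K\rea{\chi}{\m'}$. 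A bookkeeping of the graded dimensions, using Proposition~\ref{prop:comm-anticomm} and Lemma~\ref{lem:odd-cent}, shows that, up to a Clifford (type-$Q$) tensor factor $\mathfrak q_1(K)$ occurring precisely when $b_1$ is odd --- coming from a one-dimensional anisotropic summand of $\od{\g(-1)}$ under the symmetric form $\langle\,,\rangle$ of Section~\ref{sec:nilpotent} --- the ratio $\delta/\delta'$ equals $p^{b_0/2}2^{\lfloor b_1/2\rfloor}$; this is the source of the dichotomy in the conjecture.

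The crux, and the step I expect to be the main obstacle, is the identification $W_\chi(\g)\cong W_{\chi_n}(\g_{\chi_s})$ of the \emph{generalized} $W$-superalgebra of $\g$ at $\chi=\chi_s+\chi_n$ with the \emph{honest} nilpotent-character $W$-superalgebra of the centralizer $\g_{\chi_s}$ at $\chi_n$. This is the queer analogue of the fact that for Lie algebras of reductive groups the combination of the Friedlander--Parshall reduction \cite{FP} and Premet's theorem \cite{Pr2} presents $U_\chi(\mathfrak g)$ as a matrix algebra over the finite $W$-algebra of $\mathfrak g_{\chi_s}$ at the nilpotent part; but there the proof uses the parabolic with Levi $\mathfrak g_{\chi_s}$, which here does not exist as a sub-superalgebra --- the $\gl(r_i|s_i)$-summands of $\g_{\chi_s}$ are not Levi factors of $\g$. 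The plan is to prove the identification directly from the matrix realization: the $\ad H$-eigenspaces of $\g$ not contained in $\g_{\chi_s}$ split, at the even level, into a genuine pair of opposite nilradicals $\mathfrak u^{\pm}$ of $\gl(n)$ attached to the eigenvalues of the matrix of $\chi_s$ (here $\ev{\g_{\chi_s}}$ \emph{is} an honest Levi of $\ev\g=\gl(n)$), and, at the odd level, into blocks interchanged by that matrix; one then shows that the Whittaker reduction defining $W_\chi(\g)$ is absorbed entirely into the matrix factor along these extra directions, i.e.\ that the relevant $\m$-invariants computed inside $\g$ coincide with those computed inside $\g_{\chi_s}$ up to the Clifford factor above. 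In the Lie algebra setting this is a Premet-type transversality argument; the new difficulty in the queer setting is that the extra odd directions do not form a Lagrangian complement, so I would run the comparison on the associated graded, where the extra odd directions pair off under $\langle\,,\rangle$ and contribute only the $\mathfrak q_1(K)$ already accounted for.

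Granting $W_\chi(\g)\cong W_{\chi_n}(\g_{\chi_s})$, the conclusion is formal. Writing $W$ for this common superalgebra, $\rea{\chi}{\g}$ and $\rea{\chi}{\g_{\chi_s}}$ are (super) matrix algebras over $W^{\mathrm{op}}$ of super-sizes $\delta$ and $\delta'$, hence Morita super-equivalent; the functors $F=\Hom_{\rea{\chi}{\g}}(\mathcal Q_\m,-)$ and $G=\mathcal Q_\m\otimes_{W_\chi(\g)}(-)$ are exact and mutually adjoint for purely ring-theoretic reasons, which explains why the conjecture expects no ``geometric'' candidates. When $b_1$ is even, $\delta/\delta'=p^{b_0/2}2^{b_1/2}$ is purely even and $F,G$ are inverse type-preserving equivalences with the stated dimension scaling; when $b_1$ is odd the extra $\mathfrak q_1(K)$ factor forces $F\circ G\cong\mathrm{Id}\oplus\Pi$ and $G\circ F\cong\mathrm{Id}\oplus\Pi$, makes $F$ interchange type $M$ with type $Q$, and produces the asymmetric dimensions $p^{b_0/2}2^{(b_1+1)/2}\dim V$ and $p^{b_0/2}2^{(b_1-1)/2}\dim V$ from the $1|1$-dimensional simple $\mathfrak q_1(K)$-module. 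The case $n=2$ (Section~\ref{sec:conjecture}), where the equivalence is verified by direct computation, should match this picture and serve as a consistency check.
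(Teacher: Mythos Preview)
The statement you are addressing is a \emph{conjecture}, and the paper does not prove it in general. What the paper actually proves is Theorem~\ref{th:morita_q2}: the conjecture holds for $\mathfrak q(2)$. That proof is entirely ad hoc---it proceeds by enumerating the possible semisimple parts $\chi_s$ for $\mathfrak q(2)$, computing $\rea{\chi}{\g_{\chi_s}}$ explicitly in each case (the centralizer being $\ev\h$, $\gl(2)$, $\gl(1|1)$, or $\mathfrak q(1)\oplus Kh_2$), and then comparing block by block with the structure of $\rea{\chi}{\mathfrak q(2)}$ already worked out in Sections~\ref{sec:q(2)I}--\ref{sec:q(2)II}. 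No functors are constructed; the Morita (super-)equivalence is simply read off from matching quiver presentations or explicit algebra isomorphisms.

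Your proposal is therefore not a variant of the paper's proof but an outline of a general attack on an open problem. The strategy---extending the $W$-superalgebra machinery of Section~\ref{sec:nilpotent} to arbitrary $\chi$ and then identifying $W_\chi(\g)$ with $W_{\chi}(\g_{\chi_s})$---is conceptually natural and, if it worked, would supply exactly the functors the paper says it cannot find. The extension of Proposition~\ref{prop:nil-freeness} and the matrix presentation to non-nilpotent $\chi$ is plausible (since $\chi_s$ vanishes on $\m\subset\bigoplus_{k<0}\g(k)$), and the bookkeeping with $b_0,b_1$ and the Clifford factor is consistent with the conjecture's dichotomy.

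But the step you yourself flag as ``the crux'' is a genuine gap, not a technicality. Your argument for $W_\chi(\g)\cong W_{\chi}(\g_{\chi_s})$ amounts to ``run the comparison on the associated graded,'' which is not a proof: passing to the associated graded loses exactly the information that distinguishes Morita-equivalent algebras from isomorphic ones, and the extra odd directions in $\g\setminus\g_{\chi_s}$ do not obviously contribute only a matrix (or Clifford) factor at the filtered level. Note also that you are aiming for something strictly stronger than the conjecture---an \emph{isomorphism} of $W$-superalgebras rather than a Morita super-equivalence of the reduced enveloping algebras---so even if the conjecture is true your identification could fail. The paper's Remark after Conjecture~\ref{conj:morita} is precisely about this: the missing parabolic with Levi $\g_{\chi_s}$ is what makes the reductive/basic-classical argument break down, and your transversality sketch does not yet circumvent that obstruction.
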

In the above and later on, $\chi$ in $\rea{\chi}{\g_{\chi_s}}$ is
understood as the restriction of $\chi$ to ${\g_{\chi_s}}$. One can
show by Lemma~\ref{lem:oddcent} that
$$
b_1 \equiv  \# \{1\le i \le n \mid \chi (h_i) \neq 0\} \mod 2.
$$
We will say the superalgebras $\rea{\chi}{\g}$ and
$\rea{\chi}{\g_{\chi_s}}$ are {\em Morita super-equivalent} if they
satisfy the properties prescribed in the above conjecture. In case~
(i) above, the superalgebras are indeed Morita equivalent the usual
sense.

\begin{remark}
When $\g$ is one of the basic classical Lie superalgebras, the above
Morita super-equivalence is indeed the usual Morita equivalence with
explicitly given functors (\cite[Theorem~5.2]{WZ}). This in turn was
a generalization of a theorem of Friedlander and Parshall \cite{FP}
(also cf. \cite{KW}) for Lie algebras of reductive algebraic groups.
However, $\g =\qn$ does not admit a natural triangular decomposition
with $\g_{\chi_s}$ as the middle term. This is already evident from
the calculation of $\mathfrak q(2)$ below. Hence, the natural
adjoint functors in \cite{FP} (also \cite{WZ}) have no counterpart
in the current setup.

Note that there is a similar result of Frisk and Mazorchuk \cite{FM}
in characteristic zero which establishes a super-equivalence between
the strongly typical blocks of the category $\mathcal O$ of $\qn$
and those of its even subalgebra $\gl(n)$. It is interesting to see
if it is possible to adapt their method to the modular setting.
\end{remark}

\subsection{The Morita super-equivalence for $\mathfrak{q}(2)$}

\begin{theorem}  \label{th:morita_q2}
Conjecture~\ref{conj:morita} holds for $\mathfrak q(2)$. That is,
the algebras $U_\chi(\mathfrak q(2))$ and $U_\chi(\mathfrak q(2)_{\chi_s})$
are Morita super-equivalent.
\end{theorem}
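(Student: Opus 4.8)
The strategy is to prove Theorem~\ref{th:morita_q2} case by case according to the structure of $\chi_s$, using the explicit structural results for $U_\chi(\mathfrak q(2))$ obtained in Sections~\ref{sec:q(2)I}--\ref{sec:q(2)II} and the description of the centralizer from Lemma~\ref{lem:oddcent}. After replacing $\chi$ by a $\mathrm{GL}(2)$-conjugate, we may assume $\chi(e)=0$, so that the semisimple part $\chi_s$ is diagonal and the nilpotent part $\chi_n$ is supported on $f$. By Lemma~\ref{lem:oddcent} applied to $n=2$, the centralizer $\mathfrak q(2)_{\chi_s}$ is one of: $\mathfrak q(2)$ itself (when $\chi(h_1)=\chi(h_2)=0$), $\mathfrak q(1)\oplus\mathfrak q(1)$ (when $\chi(h_1),\chi(h_2)\ne 0$ with $\chi(h_1)^2\ne\chi(h_2)^2$), $\gl(1|1)$ (when $\chi(h_1)=-\chi(h_2)\ne 0$, or when $\chi(h_1)=\chi(h_2)\ne 0$), and $\mathfrak q(1)\oplus\gl(0|1)$-type degenerations (when exactly one of $\chi(h_i)$ vanishes). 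In the first case $\g_{\chi_s}=\g$ and there is nothing to prove; in each remaining case one must exhibit the claimed functors $F,G$ and verify the dimension and type statements of Conjecture~\ref{conj:morita}.

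\textbf{Reduction to the nilpotent case inside the centralizer.} For each fixed $\chi_s$, note that $\chi$ restricted to $\g_{\chi_s}$ has the same semisimple part $\chi_s$ (as an element of $\g_{\chi_s}^*$ it is the sum of $\chi_s$, which is central-type there, and $\chi_n$, which is nilpotent in $\g_{\chi_s}$). The reduced enveloping algebra $U_\chi(\g_{\chi_s})$ is easy to analyze directly: for $\g_{\chi_s}=\mathfrak q(1)\oplus\mathfrak q(1)$ or $\gl(1|1)$ or their degenerations, it is a small explicit algebra (a product of matrix algebras over $K$ or over $K[x]/(x^2)$, possibly with type-$Q$ summands $\mathfrak q_m(-)$), whose module category is completely transparent. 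The task is then to match this against the structure of $U_\chi(\mathfrak q(2))$ which has already been tabulated: for $\chi$ semisimple this is Propositions~\ref{prop-q(2)-ss-equal}, \ref{prop-q(2)-ss-opposite}, \ref{prop:q(2)-ss-zero-nonzero}, and \ref{q(2)-rea-ss-noneq-nonzero}; for mixed $\chi$ it is Proposition~\ref{prop:q(2)-mixed}; and for the remaining mixed/nilpotent cases it is the analysis at the end of Section~\ref{sec:q(2)II}. One reads off in each instance the block decomposition of $U_\chi(\mathfrak q(2))$, identifies a Morita-type equivalence or super-equivalence with $U_\chi(\g_{\chi_s})$ block by block, and checks that the dimension scaling factor $p^{b_0/2}2^{b_1/2}$ (resp. the $b_1$-odd variant) agrees with the ratio of dimensions of corresponding simple modules, using that $b_1\equiv\#\{i:\chi(h_i)\ne 0\}\bmod 2$.

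\textbf{Construction of the functors and the main obstacle.} Since there is no triangular decomposition with $\g_{\chi_s}$ as a Levi factor, the functors $F,G$ cannot be the parabolic induction/restriction pair of \cite{FP}; instead, once the two algebras are each identified with an explicit product of (super)matrix algebras over $K$ or $K[x]/(x^2)$, one simply defines $F$ and $G$ via an explicit bimodule — concretely, via a Morita context between the two explicit algebras — and checks exactness and adjointness formally. The genuinely delicate point, and the one I expect to be the main obstacle, is the bookkeeping of \emph{types} ($M$ versus $Q$) across the equivalence in the cases where $b_1$ is odd (i.e. when exactly one of $\chi(h_1),\chi(h_2)$ is nonzero, giving $\gl(1|1)$- or $\mathfrak q(1)$-degenerate centralizers): here $F$ must interchange type $M$ and type $Q$, the composite $F\circ G$ is only $\mathrm{Id}\oplus\Pi$ rather than $\mathrm{Id}$, and the dimension formula splits into the two sub-cases $p^{b_0/2}2^{(b_1+1)/2}$ and $p^{b_0/2}2^{(b_1-1)/2}$. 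Verifying that the type-$Q$ structure on the relevant simple $U_\chi(\mathfrak q(2))$-modules (which comes from the odd automorphism of $V_\chi(\la)$ when $\dim\h_{\bar 1}/\ker(\cdot|\cdot)_\la$ is odd) matches the type predicted on the $\g_{\chi_s}$ side requires carefully tracking the odd automorphisms through the explicit isomorphisms $U_\chi(\mathfrak q(2))\cong\bigoplus \mathfrak q_m(K[x]/(x^2))\oplus\bigoplus M_k(K)$ recorded in Section~\ref{sec:q(2)II}. This is the step where the ``ad hoc direct computations'' promised in the introduction are unavoidable, and the proof is completed by assembling these case-by-case verifications.
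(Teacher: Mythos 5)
Your overall strategy---a case-by-case analysis on $\chi_s$, computing $\rea{\chi}{\g_{\chi_s}}$ explicitly and matching it block by block against the structure of $\rea{\chi}{\mathfrak q(2)}$ already tabulated in Sections~\ref{sec:q(2)I}--\ref{sec:q(2)II}---is exactly the route the paper takes. The genuine gap is in your identification of the centralizers, i.e.\ in the application of Lemma~\ref{lem:oddcent}. For $0\neq\chi(h_1)^2\neq\chi(h_2)^2\neq0$ the corresponding semisimple odd element has two squarely distinct nonzero eigenvalues, each of multiplicity one and with no opposite partner, so $\g_{\chi_s}\cong\gl(1|0)\oplus\gl(1|0)=\ev\h$ is \emph{purely even}, not $\mathfrak q(1)\oplus\mathfrak q(1)$ (which would be $2|2$-dimensional). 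For $\chi(h_1)=\chi(h_2)\neq0$ the eigenvalue $\mu$ has multiplicity two and $-\mu$ does not occur, so $\g_{\chi_s}\cong\gl(2|0)=\ev\g\cong\gl(2)$, not $\gl(1|1)$; the centralizer $\gl(1|1)$ (spanned by $h_1,h_2,E,F$) arises only when $\chi(h_1)=-\chi(h_2)\neq0$, which is a $b_1$-even case.

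This is not a cosmetic slip, because the whole verification consists of computing $\rea{\chi}{\g_{\chi_s}}$ for the actual centralizer. With your identification in the equal-nonzero case one would compare $\rea{\chi}{\mathfrak q(2)}$ with $\rea{\chi}{\gl(1|1)}$; but since $(\la_1+\la_2)^p-(\la_1+\la_2)=(2\chi(h_1))^p\neq0$ forces $\la_1+\la_2\neq0$ for every $\la\in\La_{\chi}$, all baby Verma modules of $\gl(1|1)$ are then simple and projective, so $\rea{\chi}{\gl(1|1)}$ is semisimple, whereas $\rea{\chi}{\mathfrak q(2)}$ has the non-semisimple blocks of Proposition~\ref{prop-q(2)-ss-equal}; no Morita (super-)equivalence exists and the case collapses. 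The correct comparison is with $\rea{\chi}{\gl(2)}\cong\rea{0}{\mathfrak{sl}(2)}\otimes K[x]/(x^p-x-\chi(h_1)^p)$, whose blocks reproduce exactly the quiver of Proposition~\ref{prop-q(2)-ss-equal}(ii); similarly in the squarely-distinct case one compares with the commutative semisimple algebra $\rea{\chi}{\ev\h}$. Your remaining bookkeeping (that $b_1$ is odd precisely when exactly one $\chi(h_i)$ is nonzero, with the centralizer $\mathfrak q(1)\oplus K$, and that types $M$ and $Q$ must be interchanged there) is the right idea and is what the paper does; note also that for $\mathfrak q(2)$ the only genuinely mixed case is $\chi(h_1)=\chi(h_2)\neq0$ with $\chi(f)\neq0$, since a nonzero nilpotent part forces $\chi_s$ to be scalar, so your ``remaining mixed cases'' reduce to this one plus the trivial ones where $\g_{\chi_s}=\g$.
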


This subsection is devoted to the proof of the above theorem
by a detailed analysis of the representation theory of the centralizers of
semisimple part $\chi_s$ of $p$-characters $\chi$ in the case of
$\mathfrak{q}(2)$ and then a comparison with the results in Section~\ref{sec:q(2)II}.

\subsubsection{Semisimple $\chi$ with $0 \neq \chi(h_1)^2 \neq \chi(h_2)^2 \neq 0$}

The centralizer $\g_{\chi_s}$ is the even Cartan subalgebra $\ev
\h$. The algebra $\rea{\chi}{\ev \h}$ is
semisimple and commuative. Thus $\rea{\chi}{\ev \h}$ and
$\rea{\chi}{\g}$ are Morita equivalent, by a comparison with
 \ref{q(2)-rea-ss-noneq-nonzero}.

\subsubsection{Semisimple $\chi$ with $\chi(h_1)=\chi(h_2) \neq 0$}
\label{subsubsec:q(2)-cen-eq-nonzero}

The centralizer $\g_{\chi_s}$ is the even subalgebra $\ev \g
=\gl(2)$. Its reduced enveloping algebra $\rea{\chi}{\ev
\g}=\rea{\chi}{\gl(2)}$ is isomorphic to $\rea{0}{\mathfrak{sl}(2)}
\otimes K[x]/{(x^p-x-\chi(h_1)^p)}$.
By combining with the well-known structure of the algebra $\rea{0}{\mathfrak{sl}(2)}$
(see for example \cite[Propostion~2.4]{FP} and
\cite[Example~3.10]{Gor}), we deduce the following.
\begin{proposition}
Let $\chi \in \ev{\gl(2)}^*$ be semisimple with $\chi(h_1) =
\chi(h_2) \neq 0$. Then,
\begin{itemize}
%\item[(i)] The algebra $\rea{\chi}{\gl(2)}$ has $p^2$ isomorphism
%classes irreducible modules.
%
\item[(i)] the baby Verma $\rea{\chi}{\gl(2)}$-module with   $\la_1-\la_2=p-1$ is
projective and simple;

\item[(ii)] for $\la_1-\la_2\in \mathbb F_p \backslash \{p-1\}$,
there is one block with exactly two simple $\rea{\chi}{\gl(2)}$-modules
of high weights $(\la_1,\la_2)$ and $(\la_2-1,\la_1+1)$.
This block is isomorphic to the
quiver algebra given in
Proposition~\ref{prop-q(2)-ss-equal}~(ii).
\end{itemize}
\end{proposition}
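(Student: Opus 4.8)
The plan is to reduce the structure of $\rea{\chi}{\gl(2)}$ to that of the well-understood restricted enveloping algebra $\rea{0}{\mathfrak{sl}(2)}$ via the tensor decomposition $\rea{\chi}{\gl(2)} \cong \rea{0}{\mathfrak{sl}(2)} \otimes K[x]/(x^p - x - \chi(h_1)^p)$ recorded just above the statement. Since $\chi(h_1) = \chi(h_2) \neq 0$, the polynomial $x^p - x - \chi(h_1)^p$ is separable (its derivative is $-1$), so $K[x]/(x^p - x - \chi(h_1)^p) \cong K^{\times p}$ is a product of $p$ copies of $K$, indexed by the $p$ choices of the central scalar by which the ``trace'' element $h_1 + h_2$ acts. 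Thus $\rea{\chi}{\gl(2)}$ is Morita equivalent (indeed isomorphic as an algebra, up to this central twist) to $\rea{0}{\mathfrak{sl}(2)}$. The block decomposition, projective covers, and baby Verma modules of $\rea{\chi}{\gl(2)}$ are therefore obtained from those of $\rea{0}{\mathfrak{sl}(2)}$ by tensoring with the one-dimensional semisimple factor, which does not change the Morita-equivalence type of any block.

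First I would make the correspondence of highest weights explicit. A weight $\la = (\la_1, \la_2) \in \La_\chi$ for $\gl(2)$ restricts to the $\mathfrak{sl}(2)$-weight $\la_1 - \la_2 \in \mathbb{F}_p$ (the eigenvalue of the coroot $h_1 - h_2$, read modulo $p$) together with the central datum $\la_1 + \la_2$ pinning down the $K[x]/(\cdots)$-factor. Under this identification the baby Verma $\rea{\chi}{\gl(2)}$-module $Z_\chi(\la_1,\la_2)$ corresponds to the baby Verma $\rea{0}{\mathfrak{sl}(2)}$-module of highest weight $\la_1 - \la_2$, tensored with the appropriate one-dimensional module over the semisimple factor. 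The Linkage Principle for $\mathfrak{sl}(2)$ says two $\mathfrak{sl}(2)$-weights $a$ and $a'$ in $\mathbb{F}_p$ are linked precisely when $a' \equiv -a - 2 \pmod p$ (reflection about $-1$ in the ``dot'' action); in $\gl(2)$-terms, with $a = \la_1 - \la_2$, the linked weight $a' = -a-2 = (\la_2-1)-(\la_1+1)$ corresponds exactly to the $\gl(2)$-weight $(\la_2 - 1, \la_1 + 1)$, with the central part unchanged.

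Next I would invoke the classical structure of $\rea{0}{\mathfrak{sl}(2)}$ as recalled in \cite[Proposition~2.4]{FP} and \cite[Example~3.10]{Gor}: the Steinberg baby Verma module (highest weight $p-1$, the unique self-linked weight) is projective and simple, while for each of the $(p-1)/2$ remaining linkage classes one has a block containing exactly two simple modules, whose basic algebra is the quiver algebra with two vertices joined by a double arrow in each direction, subject to the relations displayed in Proposition~\ref{prop-q(2)-ss-equal}(ii). Tensoring with the separable semisimple factor $K[x]/(x^p - x - \chi(h_1)^p)$ splits each such $\mathfrak{sl}(2)$-block into $p$ isomorphic copies (one for each value of $\la_1 + \la_2$), so we get: (i) for $\la_1 - \la_2 = p-1$ the $\gl(2)$-baby Verma is projective and simple; (ii) for $\la_1 - \la_2 \in \mathbb{F}_p \setminus \{p-1\}$ a block with two simple modules of highest weights $(\la_1, \la_2)$ and $(\la_2 - 1, \la_1 + 1)$, isomorphic to the quiver algebra of Proposition~\ref{prop-q(2)-ss-equal}(ii). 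This is precisely the assertion of the proposition.

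The only genuine point requiring care — the step I expect to be the main obstacle — is verifying the compatibility of the two ``projective cover and block'' descriptions: one must check that the Brauer-type reciprocity and the radical-series computation underlying Proposition~\ref{prop-q(2)-ss-equal}(ii) really do match the known $\mathfrak{sl}(2)$ picture under the weight dictionary $\la \mapsto (\la_1 - \la_2,\ \la_1 + \la_2)$, including that the two arrows in each direction (rather than one) arise correctly from the $2$-dimensionality coming from the odd Cartan on the $\mathfrak{q}(2)$ side versus the doubling coming from $K[x]/\langle x^2 \rangle$ phenomena — but in fact on the $\gl(2)$ side there is no odd part, so the block is genuinely the $\mathfrak{sl}(2)$-block and the double arrows are exactly those of the classical $\mathfrak{sl}(2)$ quiver; one simply cites \cite{FP, Gor} and notes the central twist is harmless. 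Everything else is bookkeeping with the separable factor.
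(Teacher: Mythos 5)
Your proposal is correct and follows essentially the same route as the paper, which likewise reduces via the factorization $\rea{\chi}{\gl(2)}\cong \rea{0}{\mathfrak{sl}(2)}\otimes K[x]/(x^p-x-\chi(h_1)^p)$ and then cites the known structure of $\rea{0}{\mathfrak{sl}(2)}$ from \cite{FP} and \cite{Gor}; you merely spell out the separability of the central factor and the weight dictionary $\la\mapsto(\la_1-\la_2,\la_1+\la_2)$ with the linkage $(\la_1,\la_2)\leftrightarrow(\la_2-1,\la_1+1)$, which is exactly the intended bookkeeping. Only the passing phrase that $\rea{\chi}{\gl(2)}$ is ``Morita equivalent (indeed isomorphic) to $\rea{0}{\mathfrak{sl}(2)}$'' is loose (it is a direct sum of $p$ copies, so the number of blocks differs), but your subsequent block-by-block splitting over the $p$ central characters is the correct statement and the conclusion is unaffected.
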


By a comparison with Proposition~\ref{prop-q(2)-ss-equal},
$\rea{\chi}{\g}$ and $\rea{\chi}{\g_{\chi_s}}$ are
Morita equivalent.

\subsubsection{Semisimple $\chi$ with $\chi(h_1)=-\chi(h_2) \neq 0$}

The centralizer $\g_{\chi_s}$ is spanned by $h_1, h_2, E$, and $F$
and it is isomorphic to the Lie superalgebra $\gl(1|1)$.
Denote by  $\tilde{\mathfrak{b}}$ the subalgebra $Kh_1 \oplus Kh_2
\oplus KE$.
We have all irreducible $\rea{\chi}{\tilde{\mathfrak{b}}}$-module
given by $K_\la = K$ with $\la(h_i)^p-\la(h_i)=\chi(h_i)^p$, upon
which $h_i$ acts as a scalar $\la_i$, and $E$ acts as zero.
Inducing from $K_\la$, we get the baby Verma modules for
$\rea{\chi}{\gl(1|1)}$:
\[
\tilde{Z}_{\chi}(\la) = \rea{\chi}{\gl(1|1)}
\otimes_{\rea{\chi}{\tilde{\mathfrak{b}}}} K_{\la}
\]
which is two-dimensional and has a unique simple quotient $\tilde{L}_{\chi}(\la)$.
%Those are two-dimensional with a basis $\{ 1_{\la}, F. 1_{\la}\}$.
%These baby Verma modules can be divided into two types:
%\begin{itemize}
%\item[(i)] $\la_1=-\la_2$. There are $p$ such weights. The baby Verma
%modules are reducible and has a unique submodule of  weight $(\la_1 -1, -\la_1+1)$.
%
%\item[(ii)] $\la_1+\la_2\neq 0$. There are $p(p-1)$ such weights. The baby
%Verma modules are projective and simple of type $M$.
%\end{itemize}
\begin{proposition}
Let $\chi \in \ev {\gl(1|1)}^*$ be such that $\chi(h_1)=-\chi(h_2)
\neq 0$. Then,
\begin{itemize}
%\item[(i)] The algebra $\rea{\chi}{\gl(1|1)}$ has $p^2$ isomorphism
%classes of irreducible modules. All of them are of type $M$.
%
\item[(i)] the $p$ simple modules $\tilde{L}(\la_1, -\la_1)$
belong to a single
block,  and this block is isomorphic to the quiver algebra with
relations given in Proposition~\ref{prop-q(2)-ss-opposite} (ii);

\item[(ii)] each baby Verma   $\tilde{Z}_\chi(\la_1,\la_2)$ with $\la_1+\la_2\neq 0$
is projective and simple.
\end{itemize}
\end{proposition}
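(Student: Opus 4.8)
The plan is to prove the proposition by relating the representation theory of $\rea{\chi}{\gl(1|1)}$ for $\chi$ with $\chi(h_1)=-\chi(h_2)\neq 0$ directly to the already-completed analysis of $\rea{\chi}{\mathfrak q(2)}$ in Section~\ref{subsubsec:q(2)-rea-ss-opp-nonzero}, together with the structure of the baby Verma modules $\tilde Z_\chi(\la)$. First I would enumerate $\La_\chi$: since $\la_i^p-\la_i=\chi(h_i)^p$ with $\chi(h_1)=-\chi(h_2)$, the weights split as in Proposition~\ref{prop-q(2)-ss-opposite} into the $p$ weights with $\la_1=-\la_2$ (equivalently $\la_1+\la_2=0$) and the $p(p-1)$ weights with $\la_1+\la_2\neq 0$. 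For each $\la$, the baby Verma module $\tilde Z_\chi(\la)$ is $2$-dimensional, spanned by $v_0=1\otimes 1_\la$ and $Fv_0$, with $h_i$ acting diagonally, $E$ acting as $E.v_0=0$, $E.Fv_0=(\la_1+\la_2)v_0$ (using $[E,F]=h_1+h_2$ inside $\gl(1|1)$), and $F$ acting by $F.v_0=Fv_0$, $F.Fv_0=0$ (since $F^2=0$ as $F$ is odd and $[F,F]=0$ here). Hence $\tilde Z_\chi(\la)$ is simple precisely when $\la_1+\la_2\neq 0$, which gives part~(ii): for such $\la$ there are $p(p-1)/$... well, $p(p-1)$ simple modules realized as baby Verma modules, and one checks by a dimension count (the reduced enveloping algebra $\rea{\chi}{\gl(1|1)}$ has dimension $2\cdot p^2\cdot 2=4p^2$, matching $\sum(\dim\tilde L)^2$ weighted by type) that these baby Verma modules are their own projective covers.

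For part~(i), when $\la_1=-\la_2$ one has $E.Fv_0=0$, so $Kv_0$ is a submodule and $KFv_0$ a quotient; the head is $\tilde L_\chi(\la_1,-\la_1)=Z_\chi(\la)/KFv_0$, which is $1$-dimensional, and there is a nonsplit self-extension structure. The key step is to identify the block containing these $p$ simple modules with the quiver algebra of Proposition~\ref{prop-q(2)-ss-opposite}(ii), the zigzag/affine-type quiver on $p$ vertices with relations $(d^+)^2=(d^-)^2=d^+d^-+d^-d^+=0$. I would do this by the same argument used there: since $\rea{\chi}{\gl(1|1)}$ is a symmetric superalgebra (the odd trace form on $\gl(1|1)$ is nondegenerate and invariant, so Proposition~\ref{prop:sym} and its proof apply verbatim), the projective cover $P(\la_1,-\la_1)$ has simple head and simple socle both equal to $\tilde L_\chi(\la_1,-\la_1)$ and Loewy length $3$, with $\mathrm{rad}/\mathrm{rad}^2\cong \tilde L_\chi(\la_1-1,-\la_1+1)\oplus\tilde L_\chi(\la_1+1,-\la_1-1)$ --- this follows from the Brauer reciprocity (as in Section~\ref{subsubsec:q(2)-rea-ss-opp-nonzero}, using that all these simples are of type $M$) applied to the two reducible baby Verma modules adjacent to $\la$. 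That Loewy data, over $p$ vertices arranged cyclically (the shift $\la_1\mapsto\la_1\pm 1$ is the affine $A_{p-1}$ cycle), pins down the basic algebra of the block uniquely, and one verifies it is exactly the quiver algebra displayed in Proposition~\ref{prop-q(2)-ss-opposite}(ii).

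The main obstacle I expect is the rigidity/uniqueness argument for the quiver relations in part~(i): knowing $\mathrm{head}$, $\mathrm{soc}$, and $\mathrm{rad}/\mathrm{rad}^2$ of each projective pins down the quiver, but extracting all the relations --- in particular the commuting/anticommuting relations $d^+d^-+d^-d^+=0$ and $(d^\pm)^2=0$ --- requires either an explicit construction of the projective modules (in the style of Xiao \cite{Xia} or Germoni \cite{Ger}, as invoked in the proof of Proposition~\ref{prop-q(2)-ss-opposite}) or a self-contained verification that the only symmetric algebra with this separated Loewy structure on the affine $A_{p-1}$ quiver is the stated one. I would handle this by invoking the same references and noting that $\rea{\chi}{\gl(1|1)}$ for $\chi(h_1)=-\chi(h_2)\neq 0$ is genuinely isomorphic to (a block-sum involving) the corresponding block of $\rea{\chi}{\mathfrak q(2)}$ restricted appropriately --- indeed $\gl(1|1)=\g_{\chi_s}$ sits inside $\mathfrak q(2)$ and the relevant $\mathfrak q(2)$-block computation already produced precisely this quiver algebra, so the comparison is immediate rather than a fresh computation. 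This is also exactly what is needed downstream to conclude the Morita super-equivalence in Theorem~\ref{th:morita_q2} for this case.
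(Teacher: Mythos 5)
Your computation of the baby Verma modules $\tilde Z_\chi(\la)$ (two-dimensional, $E\cdot Fv_0=(\la_1+\la_2)v_0$, hence simple exactly when $\la_1+\la_2\neq 0$), the dimension count giving part (ii), and the Loewy structure of $P(\la_1,-\la_1)$ via the symmetric-algebra property plus reciprocity (all simples here are of type $M$) follow essentially the paper's route: the paper records that $\tilde Z_\chi(\la_1,-\la_1)$ has a unique one-dimensional submodule of weight $(\la_1-1,-\la_1+1)$ and then constructs the projective covers explicitly, as in the $\mathfrak{sl}(2)$ case of \cite{Xia}, to read off the quiver with relations. One slip to fix: $Kv_0$ is \emph{not} a submodule (since $F\cdot v_0=Fv_0\neq 0$); the one-dimensional submodule is $KFv_0$ and the head is $\tilde Z_\chi(\la)/KFv_0$, which is what you actually use afterwards.

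The genuine gap is in how you propose to nail down the relations $(d^{\pm})^2=0$, $d^+d^-+d^-d^+=0$. You correctly identify that head/socle/radical-layer data alone does not determine the relations, but your stated resolution --- that the block of $\rea{\chi}{\gl(1|1)}$ ``is genuinely isomorphic to the corresponding block of $\rea{\chi}{\mathfrak q(2)}$ restricted appropriately'' because $\gl(1|1)=\g_{\chi_s}$ sits inside $\mathfrak q(2)$ --- is circular and unjustified. There is no general mechanism (restriction, induction, or a triangular decomposition with $\g_{\chi_s}$ as middle term) relating blocks of $\rea{\chi}{\g}$ and $\rea{\chi}{\g_{\chi_s}}$; the paper emphasizes exactly this point, and the present proposition is the independent computation of $\rea{\chi}{\g_{\chi_s}}$ that is then \emph{compared} with Proposition~\ref{prop-q(2)-ss-opposite} to conclude the Morita super-equivalence in Theorem~\ref{th:morita_q2}. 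Assuming the comparison at this stage would make that theorem's proof vacuous. The correct way to finish is your first alternative, which is what the paper does: construct the projective covers $P(\la_1,-\la_1)$ explicitly (this is simpler than the $\mathfrak{sl}(2)$ case in \cite{Xia}, since each $P$ is only four-dimensional) and read off all relations from the explicit homomorphisms between these projective covers.
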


\begin{proof}
When $\la_2=-\la_1$, the baby Verma
module $\tilde{Z}_\chi(\la_1,-\la_1)$ is reducible and has a unique ($1$-dimensional)
submodule of  weight $(\la_1 -1, -\la_1+1)$. A projective cover of $\tilde{L}(\la_1, -\la_1)$
can be constructed
explicitly (similar to and simpler than the $\mathfrak{sl}(2)$ case \cite{Xia}). This
leads to the calculation of the underlying block in terms of quivers.

The remaining case with $\la_1+\la_2\neq 0$ is easy.
\end{proof}

By a comparison with Proposition~\ref{prop-q(2)-ss-opposite},
$U_\chi (\g)$ and $U_\chi (\g_{\chi_s})$
are Morita equivalent.

\subsubsection{Semisimple $\chi$ with $\chi(h_1)=0$ and $\chi(h_2)\neq 0$}

%{\bf try to establish the Morita equiv here}

The centralizer $\g_{\chi_s}$ is $\ev \h \oplus K H_1 \cong
\mathfrak q(1) \oplus K h_2$. The weights $\la \in \ev \h^*$ such
that $\la(h_i)^p-\la(h_i)=\chi(h_i)^p$ can be divided into two
cases:
\begin{itemize}
\item[(i)] $\la_1 = 0$. Then the relations
$
h_1 v =0,  h_2 v = \la_2 v$ and $ H_1 v =0
$
define a one-dimensional $\rea{\chi}{\g_{\chi_s}}$-module.

\item[(ii)] $\la_1 \neq 0$. The irreducible $\rea{\chi}{\ev
\h}$-modules are one-dimensional of the form $Kv$ upon which $h_i$
act as scalars $\la_i$. We have the induced
$\rea{\chi}{\g_{\chi_s}}$-module:
\[
\tilde{Z}_{\chi}(\la_1, \la_2) = \rea{\chi}{\g_{\chi_s}}
\otimes_{\rea{\chi}{\ev \h}} Kv
\]
which are irreducible of type $Q$.
\end{itemize}

By a parallel analysis as in
\ref{subsubsec:q(2)-rea-ss-zero-nonzero}, we have an algebra isomorphism
\[
\rea{\chi}{\g_{\chi_s}} \cong (\oplus_{\la_1=0}M_1(K[x]/\langle
x^2\rangle)\bigoplus \oplus_{\la_1\neq 0}\mathfrak{q}_{1}(K))^{\text
op}.
\]
Recall that the algebra $U_\chi (\g)$ was computed in
Proposition~\ref{prop:q(2)-ss-zero-nonzero} and it is indeed Morita equivalent to $U_\chi (\g_{\chi_s})$.

\subsubsection{A mixed case: $\chi(h_1) = \chi(h_2) \neq 0$ and
$\chi(f)=1$}

The centralizer $\g_{\chi_s}$ is the even subalgebra
$\ev \g \cong \gl(2)$. By the identification via
$\rea{\chi}{\mathfrak{sl}(2)}$ as in
\ref{subsubsec:q(2)-cen-eq-nonzero},
we  show that
\[
\rea{\chi}{\g_{\chi_s}} \cong M_{p}(K)^{\oplus p} \oplus
M_{p}(K[x]/\langle x^2 \rangle)^{\oplus \frac{p(p-1)}{2}}.
\]
It follows by comparing with the algebra $\rea{\chi}{\g}$ computed in
Proposition~\ref{prop:q(2)-mixed} that $\rea{\chi}{\g}$ and
$\rea{\chi}{\g_{\chi_s}}$ are indeed Morita equivalent.

\begin{remark}
Recall \cite{WZ} the Super KW conjecture for a restricted Lie
superalgebra $\g$ states that the dimension of every
$\rea{\chi}{\g}$-module is divisible by $p^{\frac{d_0}{2}}2^{\lfloor
\frac{d_1}{2} \rfloor}$, where $\lfloor a \rfloor$ denotes the least
integer upper bound of $a$.

Let  $\g =\qn$ and let $\chi =\chi_s +\chi_n$ be the Jordan
decomposition of a general $p$-character $\chi$ for the Lie
superalgebra $\g$. Let $d_i = \dim \g_i - \dim \g_{\chi, i}$, $i \in
\Z_2$. By a comparison of dimensions using \eqref{even-cent} and
Lemma~\ref{lem:odd-cent}:
\begin{align*}
\sudim \g - \sudim \g_{\chi} = \sudim \g - \sudim
(\g_{\chi_s})_{\chi_n} = b_0|b_1 + (\sudim \g_{\chi_s} - \sudim
(\g_{\chi_s})_{\chi_n }),
\end{align*}
the Super KW  conjecture for $\g$ would follow from the validity of
Conjecture~\ref{conj:morita} when combined with the super KW
property for nilpotent $p$-character established in
Theorem~\ref{th:superKW}.

\end{remark}


\begin{thebibliography}{ABC}

\bibitem[BKN]{BKN} B. Boe, J. Kujawa and D. Nakano,
{\em Cohomology and support varieties for Lie superalgebras II}, Proc. London Math. Soc., {\bf 98} (2009), 19--44.

\bibitem[Br]{Br} J. Brundan,
{\em Kazhdan-Lusztig polynomials and character formulae for the Lie
superalgebra ${\mathfrak q}(n)$}, Adv. Math. {\bf 182} (2004),
28--77.

\bibitem[BK]{BK} J. Brundan and A. Kleshchev,
{\em Modular representations of the supergroup $Q(n)$, I}, J.
Algebra {\bf 260} (2003), 64--98.

\bibitem[CW]{CW} S.-J. Cheng and W. Wang,  {\em Remarks on the
Schur-Howe-Sergeev Duality}, Lett. Math. Phys. {\bf 52} (2000),
143--153.

\bibitem[FP]{FP} E. Friedlander and B. Parshall, {\em Modular
representation theory of Lie algebras}, Amer. J. Math. {\bf 110}
(1988), 1055--1093.

\bibitem[FM]{FM} A. Frisk and V. Mazorchuk, {\em Regular strongly typical blocks of
$\mathcal{O}^{\mathfrak{q}}$}, preprint (2008), arXiv:0810.4758.

\bibitem[Ger]{Ger} J. Germoni, {\em Indecomposable representations of special linear Lie
superalgebras}, J. Algebra {\bf 209} (1998), 367--401.

\bibitem[Gor]{Gor} I. Gordon, {\em Representations of semisimple Lie algebras
in positive characteristic and quantum groups at roots of unity},
%in: A. Pressely 9ed.),{\em Quantum Groups and Lie Theory},
%Proc. Durham 1999
London Math. Soc. Lect. Notes Series {\bf 290} (2001), 146--167.

%\bibitem[Gk]{Gk} M. Gorelik, {\em Shapovalov determinants of $Q$-type Lie
%superalgebras}, IMRP Int. Math. Res. Pap. (2006), Art. ID {\bf
%96895}, 71 pp.

\bibitem[HN]{HN} R. Holmes and D. Nakano,
{\em Brauer-type reciprocity for a class of graded associative
algebras}, J. Algebra {\bf 144} (1991), 117--126.

\bibitem[HJ]{HJ} R. Horn and C. Johnson,
{\em Topics in matrix analysis}, Cambridge University Press, Cambridge, 1994.

\bibitem[Jan]{Jan} J. Jantzen, {\em Representations of Lie algebras in prime
characteristic}, NATO Adv. Sci. Inst. Ser. C Math. Phys. Sci., {\bf
514}, Representation theories and algebraic geometry (Montreal, PQ,
1997), 185--235, Kluwer Acad. Publ., Dordrecht, 1998.

\bibitem[Pen]{Pen} I. Penkov, {\em Characters of typical irreducible
finite-dimensional $\mathfrak{q}(n)$-modules}, Funct. Anal. Appl.
{\bf 20} (1986), 30--37.

%\bibitem[PS]{PS} I. Penkov and V. Serganova,
%{\em Characters of finite-dimensional irreducible $q(n)$-modules},
%Lett. Math. Phys. {\bf  40} (1997) 147--158.


\bibitem[Pr1]{Pr1} A. Premet,
{\em Irreducible representations of Lie algebras of reductive groups
and the Kac-Weisfeiler conjecture}, Invent. Math. {\bf 121} (1995),
79--117.

\bibitem[Pr2]{Pr2}
A. Premet, {\em Special transverse slices and their enveloping
algebras}, Adv. Math. {\bf 170} (2002), 1--55.


\bibitem[Rud]{Rud} A. Rudakov,
{\em The representations of classical semisimple Lie algebras in
characteristic $p$},  Izv. Akad. Nauk SSSR Ser. Mat. {\bf 34}
(1970), 735--743.

\bibitem[Ser]{Ser} A. Sergeev,
{\em The tensor algebra of the identity representation as a module
over the Lie superalgebras $\mathfrak{gl}(n,m)$ and $Q(n)$}, Math.
USSR Sbornik {\bf 51} (1985), 419--427.

%\bibitem[Ser2]{Ser2}  Sergeev, A.~N.:
%{\em The Centre of the Enveloping Algebra for the Lie Superalgebra
%$Q(n,\C)$}, Lett. Math. Phys. {\bf 7} (1983), 177--179.

\bibitem[Skr]{Skr} S. Skryabin,
{\em Representations of the Poisson algebra in prime
characteristic}, Math. Z. {\bf 243} (2003), 563--597.

\bibitem[WK]{KW} B. Weisfeiler and V.~Kac,
{\em On irreducible representations of Lie $p$-algebras}, Func.
Anal. Appl. {\bf 5} (1971), 111--117.

\bibitem[WZ]{WZ} W. Wang and L. Zhao, {\em Representations of Lie superalgebras in prime characteristic
I}, Proc. London Math. Soc. {\bf 99} (2009), 145--167.


\bibitem[X]{Xia} J. Xiao, {\em Finite-dimensional representations of $U_t({\rm sl}(2))$ at roots of
unity},  Canad. J. Math. {\bf 49} (1997), 772--787.

\bibitem[Z]{Z} L. Zhao, {\em Finite $W$-superalgebras for queer Lie superalgebras and higher
Sergeev duality}, preprint, arXiv:1012.2326.

\end{thebibliography}
\end{document}